\let\a=\alpha
\let\e=\varepsilon
\let\d=\delta
\let\pt=\partial
\let\G=\Gamma
\let\g=\gamma
\let\th=\theta
\let\b=\beta
\let\k=\kappa
\newcommand\normmm[1]{\left\vert\kern-0.25ex \left\vert\kern-0.25ex \left\vert #1 \right\vert\kern-0.25ex \right\vert\kern-0.25ex \right\vert}
\newcommand\normm[1]{\left\lVert#1\right\rVert}
\newcommand\norm[1]{\left\lvert#1\right\rvert}
\newcommand\normmb[1]{\left\lbrack\kern-0.25ex\left\lbrack\ #1 \kern0.75ex\right\rbrack\kern-0.25ex\right\rbrack}
\newcommand\inn[1]{\left\langle#1\right\rangle}
\newcommand{\Hquad}{\hspace{0.5em}}
\newcommand{\R}{\mathbb{R}}
\renewcommand{\P}{\mathbf{P}}
\renewcommand{\S}{\mathbf{S}}
\newcommand{\ip}{(\mathbf{I}-\mathbf{P})}
\newcommand{\be}{\begin{equation}}
\newcommand{\bee}{\begin{equation*}}
\newcommand{\bm}{\begin{multline}}
\newcommand{\ee}{\end{equation}}
\newcommand{\eee}{\end{equation*}}
\newcommand{\dd}{\mathrm{d}}
\newcommand{\1}{\mathbf{1}}
\crefname{hypothesis}{Hypothesis}{Hypotheses}
\title{Global Diffusive expansion of Boltzmann Equation in exterior Domain\thanks{Submitted to the editors DATE.
\funding{This research is funded in part
by a Kwanjeong foundation and NSF Grant DMS-2106650.}}}
\author{Junhwa Jung \thanks{Division of Applied Mathematics, Brown University,
(\email{junhwa\_jung@brown.edu}).}}
\newcommand*{\addFileDependency}[1]{
  \typeout{(#1)}
  \@addtofilelist{#1}
  \IfFileExists{#1}{}{\typeout{No file #1.}}
}
\begin{document}

\maketitle

\begin{abstract}
The study of flows over an obstacle is one of the fundamental problems in fluids. In this paper we establish the global validity of the diffusive expansion for the Boltzmann equations to the Navier-Stokes-Fourier system in an exterior domain. To overcome the well-known difficulty of the lack of Poincare's inequality in the unbounded domain, we develop a new $L^2-L^3-L^6$ splitting to extend the $L^2-L^\infty$ framework into the unbounded domain.
\end{abstract}



\section{Introduction}

\subsection{Background}
The purpose of this paper is to establish the incompressible Navier-Stokes-Fourier expansion from the unsteady Boltzmann equation with diffusive boundary conditions in the exterior domain. The rescaled Boltzmann equation with small Knudsen and Mach numbers is given by the following equation:
\begin{eqnarray}
&& \label{Boltzeqfull} \e \pt_t F + v \cdot \nabla_x F = \frac{1}{\e}Q(F,F), \quad \text{in } \R_{+} \times \Omega^c \times \R^{3} \\
&& \label{Boltzbdd} F|_{\g_{-}} = \mathscr{P}^{w}_{\gamma}(F) \\
&& F(t,x,v)|_{t=0} = F_{0}(x,v) \ge 0, \quad \text{on } \Omega^{c} \times \R^3.
\end{eqnarray}
where $F(t,x,v)$ is the distribution of particles at time $t \ge 0$, position $x$ and velocity $v$. The Boltzmann collision operator $Q$ can be written as
\begin{eqnarray} \label{collisionoperator}
Q(f,g)(v) &=& Q^{+}(f,g) - Q^{-}(f,g) \\
Q^{+}(f,g)(v) &=& \iint_{\R^3 \times \S^2} B(\omega,{v-u}) f(v')g(u') \dd \omega \dd u\\
Q^{-}(f,g)(v) &=& f(v) \iint_{\R^3 \times \S^2} B(\omega,{v-u}) g(u) \dd \omega \dd u,
\end{eqnarray}
where $v'$ and $u'$ are defined as the velocities of particles after collision
\begin{eqnarray} \label{u'v'defn}
v' = v - \omega(v-u) \cdot \omega, \quad u' = u + \omega(v-u) \cdot \omega,
\end{eqnarray}
and $B(\omega,{v-u}) $ is the cross section for hard potentials with Grad's angular cut off, so that $\int_{\{\norm{\omega}=1\}} B(\omega, V) \dd \omega \lesssim \norm{V}^{\th}$ for $0\le \th \le 1$, depending on the interaction potential.

Throughout this paper, $\Omega = \{x : \xi(x) <0 \}$ is a general bounded domain in $\R^3$. Denote its boundary as $\pt \Omega = \{x : \xi(x) = 0 \}$, where $\xi(x)$ is a $C^2$ function. We assume $\nabla \xi(x) \neq 0$ at $\pt \Omega$. The inner normal vector at $\pt \Omega$ is given by
\begin{eqnarray*}
n(x) = -\frac{\nabla \xi(x)}{\norm{\nabla \xi(x)}},
\end{eqnarray*}
and it can be smoothly extended near $\pt \Omega$. Let $\gamma = \pt \Omega \times \R^3 = \gamma_{+} \cup \gamma_{0} \cup \gamma_{-}$ with
\begin{eqnarray}
\label{defboundary}
\gamma_{\pm} = \left\{(x,v) \in \pt \Omega \times \R^3 : n(x) \cdot v \gtrless 0 \right\},\\
\gamma_{0} = \left\{(x,v) \in \pt \Omega \times \R^3 : n(x) \cdot v = 0 \right\}.
\end{eqnarray}

The interaction of the gas with the boundary $\pt \Omega$ is given by the diffusive boundary condition, defined as follows
\[
M_{\rho, u, T} := \frac{\rho}{(2 \pi T)^{\frac{3}{2}}} exp\left[-\frac{\norm{v-u}^2}{2T}\right],
\]
be the local Maxwellian with density $\rho$, mean velocity $u$, and temperature $T$
and define global (or absolute) Maxwellian as
\[
\mu = M_{1,0,1} = \frac{1}{(2 \pi )^{\frac{3}{2}}} exp\left[-\frac{\norm{v}^2}{2}\right]. 
\]

On the boundary, $F$ satisfies the diffusive reflection condition, defined as 
\[
F(t,x,v) = \mathscr{P}^{w}_{\gamma}(F)(t,x,v) \quad \text{on } \gamma_{-},
\]
where
\[
\mathscr{P}^{w}_{\gamma}(F)(t,x,v) := M^{w}(x,v) \int_{\g_{+}} F(t,x,w)(n(x) \cdot w) \dd w,
\]
with wall Maxwellian defined as
\[
M^{w} = \sqrt{2 \pi} \mu, \quad \int_{ \{v \cdot n \gtrless 0\} } M^{w}(v) \norm{n \cdot v} \dd v =1.
\]
Since the domain is a complement of compact domain, we need to specify the condition at infinity. In this paper, we assume in a suitable sense
\[
\lim_{\norm{x} \to \infty} F(x,v) = \mu(v).
\]

In the study of hydrodynamic limit, Hilbert proposed Hilbert expansion as follows
\begin{eqnarray}
\label{hilbertexpansion} F = \mu + \sqrt{\mu}\sum_{n=1}^{\infty} \e^{n} f_{n}.
\end{eqnarray}

To determine the coefficient of $f_1, \cdots f_n, \cdots $ in such expansion, we plug the formal expansion \eqref{hilbertexpansion} into \eqref{Boltzeqfull}
\begin{eqnarray} \label{Boltzepower}
(\e \pt_t + v \cdot \nabla)\sum_{n=1}^{\infty} \e^n f_n = \frac{1}{\e \sqrt{\mu}}Q(\mu + \sqrt{\mu}\sum_{n=1}^{\infty} \e^n f_n, \mu + \sqrt{\mu} \sum_{n=1}^{\infty} \e^n f_n) .
\end{eqnarray}

To expand right hand side, define $L$, linearized Boltzmann operator as
\begin{eqnarray}
L f := - \mu^{\frac{1}{2}} [Q(\mu, \mu^{\frac{1}{2}}f) + Q( \mu^{\frac{1}{2}}f, \mu)] : = \nu f - K f,
\end{eqnarray}
where $\nu(v) = \int_{\R^3 \times \{ \norm{\omega} = 1 \}}q(v-v',\omega) \dd v' \dd \omega$. There exist constants $\nu_0$ and $\nu_1$ such that $0 \le \nu_0 \norm{v}^{\th} \le \nu(v) \le \nu_1 \norm{v}^{\th}$ and $K$ is a compact operator on $L^{2}(\R^3_{v})$. 
Define the nonlinear collision operator $\G$ as
\begin{eqnarray} \label{Gdefn}
\G(f,g) = \frac{1}{\sqrt{\mu}}Q(\sqrt{\mu}f,\sqrt{\mu}g).
\end{eqnarray}
Now comparing the $\e$ power of the both side in the equation \eqref{Boltzepower}, we obtain the following equations
\begin{eqnarray*}
0 &=& - L f_1, \\
v \cdot \nabla_x f_1 &=& - L f_2 + \G(f_1,f_1),\\
\pt_t f_1 + v \cdot \nabla_x f_2  &=& -L f_3 + \G(f_1,f_2) + \G(f_2,f_1), \\
&\vdots& \\
\pt_t f_{n} + v \cdot \nabla_x f_{n+1}  &=& -L f_{n+2} + \sum_{i+j=n+2, i,j\ge 1} \G(f_{i},f_{j}).
\end{eqnarray*}

From the above relation we can obtain the fluid equations. We can easily check that  $L$ is an operator on $L^{2}(\R^3_{v})$ whose kernel is 
\begin{eqnarray}
\ker L = \text{span} \{1, v, \norm{v}^2 \} \sqrt{\mu}.
\end{eqnarray}
Let $\P$ be the orthogonal projection on $\ker L$. Then, we can decompose any function $g(t,x,v)$ as follows
\begin{eqnarray}
g = \P g + \ip g,
\end{eqnarray}
where $\P g$ is the fluid parts and we denote
\begin{eqnarray}
\P g(t,x,v) = \left\{ \rho_{g}(t,x) + v \cdot u_{g}(t,x) + \frac{\norm{v}^2-3}{2}\th_{g}(t,x)\right\} \sqrt{\mu}.
\end{eqnarray}

In this paper, we will construct the solution in the exterior domain. Define $F$ as a small perturbation of $\mu$ which satisfies the following relation
\begin{eqnarray}
F = \mu + \sqrt{\mu}(\e f_1 + \e^2 f_2 + \e^{\frac{3}{2}}R).
\end{eqnarray}
According to the several references (\cite{Bardos1991-1}, \cite{Bardos1993}, \cite{Cercignani1994},\cite{Guo2006}), $f_1$ satisfies the following relation:
\begin{eqnarray} \label{deff1}
f_1 = \P f_1 = \left(\rho(x) + u(x) \cdot v + \th (x)\frac{\norm{v}^2-3}{2} \right)\sqrt{\mu},
\end{eqnarray}
where $\rho, u, \th$ satisfies the coupled Navier-Stokes equation in $\Omega^c$. Since our domain is exterior domain the following relation contains boundary condition and infinity condition. Thus, we can get the following equations.
\begin{eqnarray}
\pt_t u + u \cdot \nabla u  + \nabla p =  \eta \Delta u, \quad  \nabla \cdot u = 0  &\text{ in }& \Omega^{c}, \label{nseq}\\
\pt_{t} \th + u \cdot \nabla \th = \kappa \Delta \th &\text{ in }& \Omega^{c} \label{foureq}, \\
u(x,0) = u_0 (x), \quad \th(x,0) = \th_{0} (x) &\text{ in }& \Omega^{c}, \nonumber\\
u(x) = 0, \quad \th(x) = 0 &\text{ on }& \pt \Omega,  \nonumber \\
u \to 0, \quad \th \to 0 &\text{ as }& \norm{x} \to \infty, \nonumber \\
\rho + \th = 0&\text{ in }& \Omega^{c}.
\nonumber
\end{eqnarray}

Solution to this equation do exist in $L^p$, for any $p>2$(\cite{Kato1984}, \cite{Wigner2000}). 
In addition, define $f_2$ as
\begin{eqnarray} \label{deff2}
f_2 &=& \frac{1}{2} \sum_{i,j=1}^{3} \mathscr{A}_{ij} \left[\pt_{x_i}u_{j} + \pt_{x_j}u_{i} \right] + \sum_{i,j=1}^{3} \mathscr{B}_{i} \pt_{x_i} \th\\
\nonumber && + L^{-1}\left[\G\left(f_1,f_1\right) \right] + v \cdot u_{2} \sqrt{\mu},
\end{eqnarray}
where, $\mathscr{A}_{ij}$ and $\mathscr{B}_{i}$ are given by
\begin{eqnarray} \label{ABdefine}
\mathscr{A}_{ij} = -L^{-1} \left(\sqrt{\mu} (v_i v_j -\frac{\norm{v}^2}{3} \delta_{ij} ) \right), \quad \mathscr{B}_{i} = -L^{-1} \left(\sqrt{\mu} v_i \frac{\norm{v}^2-5 }{2} \right)
\end{eqnarray}
and $u_2 =  - \nabla \Delta^{-1} \rho_t  $.

Then, the equation for $R$ can be written as follows
\begin{align*} \label{BoltzR}
\e \pt_t R + v \cdot \nabla_x R + \e^{-1}L(R) = h + \tilde{L}(R) + \e^{1/2} \G(R,R),
\end{align*}
where $\G(f,g) = \frac{1}{\sqrt{\mu}}Q(\sqrt{\mu}f,\sqrt{\mu}g)$ ,$L(f) = -\G(\sqrt{\mu},f)$, $\tilde{L}(R) = \G(f_1+\e f_2,R) +\G(R,f_1+\e f_2)$ and

\begin{align} \label{eqofh}
h=-\e^{1/2} \pt_t (f_1+\e f_2) -\e^{-1/2} v \cdot \nabla_x (f_1+\e f_2) - \e^{-3/2}L(f_1+\e f_2) + \e^{-1/2} \G(f_1+\e f_2,f_1+\e f_2).
\end{align}
Since $u \to 0$ at $\infty$, $f_1$ and $f_2$ also go to 0 at $\infty$. Thus, we have to impose
\begin{eqnarray}
\lim_{\norm{x} \to \infty} R = 0.
\end{eqnarray}

\subsection{Basic Notation}

\begin{definition}
In this paper we define
\begin{eqnarray*}
\normm{f}_{L^{p}_{t}L^{q}_{x}L^{r}_{v}} &:=& \left(\int_{t}\left( {\int_{x}
\left({\int_{v} \norm{f}^{r} \dd v}\right)^{q/r} \dd x ^{} }\right)^{p/q} \dd t \right)^{1/p}, \\
\normm{f(t)}_{L^{q}_{x}L^{r}_{v}} &:=& \left( {\int_{x}
\left({\int_{v} \norm{f(t)}^{r} \dd v}\right)^{q/r} \dd x ^{} }\right)^{p/q}, \\
\nu (v) &:=& \int_{\R^3}B(\omega, \norm{v-u}) \mu(u) \dd u, \\
\normm{f}_{\nu}&:=&\normm{\nu(v)^{1/2}f}_{L^{2}_{x,v}}, \\
\normm{f}_{L^{p}_{t}\nu}&:=&\normm{\nu(v)^{1/2}f}_{L^{p}_{t}L^{2}_{x,v}}, \\
\norm{f}_{2,+}&:=& \left( \int_{\gamma_+} f^2 \dd \gamma \right)^2 := \left( \int_{\gamma_+} f^2 \left\{ n(x) \cdot v \right\} \dd s \right)^2, \\
\norm{f}_{L^p_{t}2,+}&:=& \left(\int_{t} \norm{f}_{2,+}^p \dd t \right)^{1/p}, \\
\inn{f,g} &:=& \int_{\R^3} fg \dd v.
\end{eqnarray*}
If $p$ or $q$ is infinity, then integral should changed to $\sup$ norm. To simplify the notation, we denote $\normm{\cdot}_{L^{p}}$ or $\normm{\cdot}_{p}$ as the standard $L^p$ norm with respect to the $x$ and $v$ variables. We will indicate the variable $t$ if time integration is also considered.
\end{definition}

\subsection{Boundary condition}

In this paper, we will focus on diffusive boundary condition. The boundary condition is given as follows.

For $f \in L^1(\g_{\pm})$ we define
\begin{eqnarray}
\mathscr{P}_{\gamma} f = \mu^{-\frac{1}{2}} \mathscr{P}^{w}_{\gamma}[\mu^{\frac{1}{2}} f] .
\end{eqnarray}

The boundary condition for $R$ is
\begin{eqnarray}
R = \mathscr{P}_{\gamma} R + \e^{1/2} r,
\end{eqnarray}
where 
\begin{eqnarray}
r = \mathscr{P}_{\gamma}[f_2] - f_2.
\end{eqnarray}

From the definition of $r$ it follows that 
\begin{eqnarray}
    \norm{r}_{2,+} +\norm{r}_{\infty} \lesssim \normm{f_2}_{L^2} + \normm{f_2}_{L^{\infty}}.
\end{eqnarray}

\subsection{Main result}

The following theorem is the main result of this paper.
\begin{theorem} \label{mainthm}
Let $\Omega$ be a $C^2$ bounded open set of $\R^3$ and $\Omega^c = \R^3 \backslash \bar{\Omega}$. For any $0< \e \ll 1$ consider the boundary value problem
\begin{equation} \label{thm1eq}
\left\{ \begin{array}{l l}
\e \pt_t F + v \cdot \nabla_x F = \frac{1}{\e}Q(F,F) & \quad \text{in } \R_{+} \times \Omega^c \times \R^{3}, \\
F|_{\g_{-}} = \mathscr{P}^{w}_{\gamma}(F) & \quad \text{on } \g_{-},\\
\lim_{\norm{x} \to \infty} F(t,x,v) = \mu(v), & \\
F(t,x,v)|_{t=0} = F_{0}(x,v) \ge 0 & \quad \text{on } \Omega^{c} \times \R^3.
\end{array} \right.
\end{equation}
Suppose the initial datum takes the form of $F_0 = \mu + \e \sqrt{\mu} f_0 \ge 0$ such that
\begin{equation}
f_0 = f_{1,0} + \e f_{2,0} + \e^{1/2}R_0,
\end{equation}
where
\begin{eqnarray}
f_{1,0} &=& \left(\rho_0(x) + u_0(x) \cdot v + \th_0 (x)\frac{\norm{v}^2-3}{2} \right)\sqrt{\mu},\\
f_{2,0} &=& \frac{1}{2} \sum_{i,j=1}^{3} \mathscr{A}_{ij} \left[\pt_{x_i}u_{0,j} + \pt_{x_j}u_{0,i} \right] + \sum_{i,j=1}^{3} \mathscr{B}_{i} \pt_{x_i} \th\\
\nonumber && + L^{-1}\left[\G\left(f_{1,0},f_{1,0}\right) \right] + v \cdot u_{0,2} \sqrt{\mu},
\end{eqnarray}
satisfies
\begin{eqnarray}
\nabla \cdot u_0 &=& 0, \\
\normm{\rho_0}_{L^{p}} + \normm{u_0}_{L^{p}} &\ll& 1,\\
\normm{R_0}_{L^2_{x,v}} + \normm{\pt_t R_0}_{L^2_{x,v}} + \e^{1/2} \normm{\omega R_0}_{L^\infty_{x,v}} + \e^{3/2} \normm{\omega \pt_t R_0}_{L^\infty_{x,v}} &\ll& 1,
\end{eqnarray}
where $\omega(v) = e^{\b \norm{v}^2}$ with $0 < \b \ll 1$ and $p \in [1,\infty]$.

Then, the problem \eqref{thm1eq} has a unique global solution, which can be represented as
\begin{eqnarray}
F= \mu + \e \sqrt{\mu}(f_1 +\e f_2+ \e^{1/2}R),
\end{eqnarray}
with $f_1$ and $f_2$ given by \eqref{deff1} and \eqref{deff2}.
\end{theorem}
\begin{remark}
$\pt_t R_0$ doesn't make sense in general. However, it can be obtain from the equation by $\pt_t R_0 = -v \cdot \nabla_{x} R_0 - \e^{-1}L(R_0) + h + \tilde{L}(R_0) + \e^{1/2} \G(R_0,R_0)$.
\end{remark}
\begin{remark}
We remark that $\e^{1/2} R$ is of higher order in $L^p$ for $2 \le p \le 6$. On the other hand, $\e^{1/2} R$ is small, but not infinitesimal in $\e$ in $L^{\infty}$, making it possible that $(F-(\mu+\e \sqrt{\mu}f_{1})) / \e = O(1)$ in $L^{\infty}$.
\end{remark}
\begin{remark}
Domain $\Omega$ does not need to be connected.
If $\Omega = \emptyset$, the theorem holds for whole domain($\R^3$).
\end{remark}

\subsection{Mathematical difficulty}
One of the major difficulties in the boundary value problem of the Boltzmann equation is controlling the grazing sets, denoted as $\g_0$ from \eqref{defboundary}. To handle this problem, Guo introduced $L^2-L^\infty$ frameworks.

Based on the energy estimate and the positivity of $L$, we obtain
\begin{eqnarray}
\normm{R(t)}^{2}_{2} + \e^{-2} \int_{0}^{t} \normm{\ip R}_{\nu}^{2} \lesssim \e \int_{0}^{t} \normm{\G(R,\pt_t R)}^2_{2} +1.
\end{eqnarray}
The main nonlinear term is
\begin{eqnarray}
\e \int_{0}^{t} \normm{\G(\P R, \pt_t \P R)}^2_{2}.
\end{eqnarray}

One of the major difficulties in the $L^2-L^\infty$ estimate is controlling $\P f$. In the bounded domain, Esposito et al. (\cite{Esposito2018-apde}) controlled $\P f$ as follows:
\begin{align*}
\int_{s}^{t} \normm{\P f (\tau)}_{2} \dd \tau \lesssim& G(t) - G(s) + \int_{s}^{t}\normm{\frac{g(\tau)}{\sqrt{\nu}}}_{2}^{2} \dd \tau + \int_{s}^{t} \norm{r(\tau)}_{2,-}^{2} \dd \tau \\
&+ \e^{-2} \int_{s}^{t} \normm{\ip f(\tau)}_{\nu}^2 \dd \tau + \int_{s}^{t} \norm{(1-P_{\g}^{ })f}_{2,-}^2 \dd \tau.
\end{align*}
What they used were a duality argument and the Poincare inequality to control $\P R$.
However, Poincare inequality does not hold in an unbounded domain.
In this paper, we develop a new splitting of $\P f$ to extend $L^2-L^\infty$ framework to the unbounded domain. The main contribution of this paper is to suggest an $L^2 - L^3 - L^6$ splitting of the $\P f$ terms. Using this splitting, we can derive a dissipative estimate without relying on the Poincare inequality.

The linearlized equation is
\begin{eqnarray}
\label{Boltzwithg}
\e \pt_t f + v \cdot \nabla f + \e^{-1} L f = g,  &\quad& \text{ in } \R_{+} \times \Omega^{c} \times \R^3\\
f = \mathscr{P}_{\gamma} f + \e^{1/2} r, &\quad& \text{ on } \g_{-} 
\end{eqnarray}
where 
\begin{eqnarray}
r = \mathscr{P}_{\gamma}[f_2] - f_2.
\end{eqnarray}
\begin{theorem}
\label{l2l6estimate}
($L^2$-$L^3$-$L^6$ splitting)
Let us assume $f$ satisfies \eqref{Boltzwithg} and $\P f(t,x,v) = (a(t,x) + b(t,x) \cdot v + c(t,x) \frac{\norm{v}^2-3}{2})\sqrt{\mu}$. Then, there exist splitting $\P_{1}f, \P_{2}f$ and $\P_{3}f$ of $\P f$, where
\begin{align*}
\P_{1} f &:= (a_1 + b_1 \cdot v + c_1 \frac{\norm{v}^2-3}{2})\sqrt{\mu}, \\
\P_{2} f &:= (a_2 + b_2 \cdot v + c_2 \frac{\norm{v}^2-3}{2})\sqrt{\mu}, \\
\P_{3} f &:= (a_3 + b_3 \cdot v + c_3 \frac{\norm{v}^2-3}{2})\sqrt{\mu},
\end{align*}
and
\begin{align*}
a = a_1 +a_2 + a_3, \quad
b = b_1 +b_2 + b_3, \quad
c = c_1 +c_2 + c_3.
\end{align*}
Definition of the $a_{i}, b_{i}$ and $c_{i}$($i=1,2,3$) can be seen in \eqref{atildedefn}, \eqref{abardefn}, \eqref{agammadefn}, \eqref{btildedefn}, \eqref{bbardefn}, \eqref{bgammadefn}, \eqref{ctildedefn}, \eqref{cbardefn} and \eqref{cgammadefn}.
Then the following estimate holds
\begin{align}
\label{a16estimate}
&\normm{a_{1}(t)}_{L^6} \lesssim \e^{-1} \normm{\ip f(t)}_{\nu} +\e\normm{\pt_t f (t)}_{2} + \normm{g (t)\nu^{-1/2}}_{2},\\ \label{bc16estimate}
&\normm{b_{{1}}(t)}_{L^6} +\normm{c_{1}(t)}_{L^6} \lesssim \e^{-1} \normm{\ip f(t)}_{\nu} +\e\normm{\pt_t \ip f (t)}_{2} + \normm{g(t) \nu^{-1/2}}_{2},\\ \label{abc22estimate}
&\normm{a_{2}(t)}_{L^2} + \normm{b_{{2}}(t)}_{L^2} + \normm{c_{2}(t)}_{L^2} \lesssim  \normm{\ip f(t)}_{\nu},\\ \label{abc33estimate}
&\normm{a_{3}(t)}_{L^3} + \normm{b_{{3}}(t)}_{L^3} + \normm{c_{3}(t)}_{L^3} \lesssim  \norm{(1-P_{\g}^{ })f(t)}_{2,+} + \e^{1/2} \norm{r(t)}.
\end{align}
\end{theorem}
\begin{remark}
Note that \eqref{bc16estimate}, \eqref{abc22estimate} and \eqref{abc33estimate} are included in the dissipation, but \eqref{a16estimate} is not due to $\normm{\pt_t f}_{2}$ term. However, thanks to the structure of the nonlinear term, we do not need to control the $a$ terms to manage the nonlinear term. Therefore, we define $\P_{1}'$, which excludes $a_1$.
\begin{definition}
\begin{align}
\P_{1}' f := (b_1 \cdot v + c_1 \frac{\norm{v}^2-3}{2})\sqrt{\mu}.
\end{align}
\end{definition}
With $L^2 - L^3 - L^6$ splitting we can bound nonlinear term as
\begin{align*}
&\e^{1/2}\normm{\G (\P \pt_t f, \P f)}_{L^2_{t,x,v}} \\ 
&\lesssim \e^{1/2} \normm{\P \pt_t f}_{L^{\infty}_{t}L^{3}_{x,v}} \normm{\P_{1}' f}_{L^{2}_{t} L^{6}_{x,v}} + \e^{3/2} \normm{\P \pt_t f}_{L^{\infty}_{t}L^{\infty}_{x,v}}\e^{-1} \normm{\P_{2}f}_{L^{2}_{t} L^{2}_{x,v}} \\
& +\e \normm{\P \pt_t f}_{L^{\infty}_{t}L^{6}_{x,v}} \e^{-1/2} \normm{\P_{3} f}_{L^{2}_{t} L^{3}_{x,v}} \\
&\lesssim \normm{\P \pt_t f}_{L^{\infty}_{t}L^{2}_{x,v}}^{2/3} \normm{\e^{3/2} \P \pt_t f}_{L^{\infty}_{t}L^{\infty}_{x,v}}^{1/3} \normm{\P_{1}' f}_{L^{2}_{t} L^{6}_{x,v}} + \normm{\e^{3/2} \P \pt_t f}_{L^{\infty}_{t}L^{\infty}_{x,v}} \normm{\e^{-1}\P_{2}f}_{L^{2}_{t} L^{2}_{x,v}} \\
& +\normm{\P \pt_t f}_{L^{\infty}_{t}L^{2}_{x,v}}^{1/3} \normm{\e^{3/2} \P \pt_t f}_{L^{\infty}_{t}L^{\infty}_{x,v}}^{2/3} \normm{\e^{-1/2}\P_{3} f}_{L^{2}_{t} L^{3}_{x,v}}.
\end{align*}
\end{remark}
\begin{remark}
Due to the structure of $\P_1$, $\P_2$, $\P_3$, and $\P$, the $L^p$ norms with respect to $v$ are equivalent for $1 \le p \le \infty$.
\end{remark}

To illustrate the idea of such $L^2-L^3-L^6$ splitting, we rewrite equation \eqref{Boltzwithg} as follows:
\begin{eqnarray*}
\e \pt_t f + v \cdot \nabla \P f + v \cdot \nabla \ip f + \e^{-1} Lf = g.
\end{eqnarray*}
We split $\P f$ into three parts as follows:
\begin{align*}
v \cdot \nabla \P_{1} f &\approx -\e \pt_t f - \e^{-1} Lf +g, \\
v \cdot \nabla \P_{2} f &\approx -v \cdot \nabla \ip f, \\
v \cdot \nabla {\P}_{3} f &\approx f|_{\g}.
\end{align*}
The precise definitions of $a_{2}$, $b_{{2}}$, $c_{2}$, $a_{3}$, $b_{{3}}$, and $c_{3}$ are given in Section 5. Intuitively, these terms represent the weak solutions of the following equation:
\begin{eqnarray*}
\int_{\Omega^c} {\cdot}_{2} \Delta \varphi \dd x &=& \int_{\Omega^c \times \R^3} \ip f \sum_{i=1}^{3} \sum_{j=1}^{3} v_j \phi \pt_i \pt_j \varphi_{i,j} \dd x \dd v,\\
\int_{\Omega^c} {\cdot}_{3} \Delta \varphi \dd x &=& 
\int_{\g_{-}} ((P_{\g}^{ }-1) f + \e^{\frac{1}{2}}r)\sum_{i=1}^{3} \sum_{j=1}^{3} v_j \phi_{i,j} \pt_i \pt_j \varphi \dd \g.
\end{eqnarray*}
We may choose $\phi_{i,j}$ for each $a,b$ and $c$.

Formally speaking, by the $H^{-1}$ estimate, we can obtain:
\begin{align*}
\normm{\P_{2} f}_{L^2_x} \lesssim \normm{\ip f}_{L^2_x}.
\end{align*}
Because of the trace inequality we can obtain:
\begin{align*}
\normm{{\P}_{3} f}_{L^3_x} \lesssim \norm{(1-P_{\g}^{ })f}_{2,+} + \e^{1/2} \norm{r}.
\end{align*}
On the other hands, we note that:
\begin{align*}
-\e \pt_t f - \e^{-1} Lf +g \in L^2.
\end{align*}
Then, we establish an $L^6$ estimate (with the same scaling as $H^1$ in $\R^3$) as follows:
\begin{align*}
\normm{\P_{1} f}_{L^6} &\lesssim \e^{-1} \normm{\ip f}_{\nu} +\e\normm{\pt_t f }_{2} + \normm{g \nu^{-1/2}}_{2}.
\end{align*}

\subsection{Historical background}
In Hilbert's sixth question, he asked for the mathematical justification of the relationship between the Boltzmann equation and fluid equations in the context of small Knudsen numbers(\cite{Hilbert1900}, \cite{Hilbert1916}, \cite{Hilbert_}).
The present work partially answer the hydrodynamic limit of Boltzmann equation. In 1971, Sone discussed the hydrodynamic limit of the Boltzmann equation in steady case(\cite{Sone1971}). Caflisch and Lachowicz obtained the Hilbert expansion of the Boltzmann equation, which relating it to the solution of the compressible Euler equations for small Knudsen numbers(\cite{Caflisch1980}, \cite{Lachowicz1987}). On the other hand, Nishida, Asano, and Ukai proved it using different methods(\cite{Nishida1978},  \cite{Asano1983}).

We can obtain the incompressible Navier-Stokes equation from the Boltzmann equation, when both the Mach number and the Knudsen number go to zero. For the small Mach number regime, it is natural to consider the perturbations near the absolute Maxwellian. This fact was observed by Sone during the formal expansion for the steady solution(\cite{Sone1971}, \cite{Sone1987}). Mathematical results were given among the others. Masi, Esposito, and Lebowitz gives short time existence in $\R^d$(\cite{Masi1989}). Bardos and Ukai proved incompressible Navier-Stokes limit in a distributional sense using a semigroup theory(\cite{Bardos1991}). Golse and Saint-Raymond derived the convergence to the DiPerna-Lions renormalized solutions(\cite{Masmoudi2003}, \cite{Lions2001}). Recently Guo introduced the $L^2-L^\infty$ framework to prove incompressible Navier-Stokes limit for the periodic domain and the bounded domain(\cite{Guo2006}, \cite{Esposito2018-apde}).

On the other hand, in the context of exterior boundary value problem for the Boltzmann equation, Ukai and Asano studied this problem with fixed Knudsen number (\cite{Ukai1983}, \cite{Ukai1986}, \cite{Ukai2009}). Xia also demonstrated a similar result with relaxed assumptions using the spectral method(\cite{Xia2017}). For hydrodynamic limit, Esposito, Guo and Marra proved the existence of a steady solution in exterior domain(\cite{Esposito2018-cmp}).

Our main result is the justification of the global stability of the equation \eqref{thm1eq}. The main contribution of this work is extending Guo's $L^2-L^\infty$ framework(\cite{Esposito2013}) to a unbounded domain by a new splitting argument of the macroscopic part of the particle distribution.

This paper is organized as follows: Section \ref{sec2} discusses the time integrability of fluid parts, which purely relies on the time decay of the solution of the fluid equations. In Section \ref{sec3}, we discuss the energy estimate of the linearized problem. Section \ref{sec4} discusses the $L^\infty$ estimate of the equation. We propose $L^2-L^3-L^6$ splitting method in Section \ref{l2l6split}. In Section \ref{sec6}, we will control the nonlinear collision term. Then, we use iteration method to construct the solution of the Boltzmann equation in Section \ref{sec7}.

\section{Estimation of the Fluid Equations} \label{sec2}

In this section, we will discuss the time decay of remainder terms in the Boltzmann equation. 

\subsection{Estimation of the fluid terms in the remainder of the Boltzmann equation}
In this subsection, we will discuss how $h$ in \eqref{eqofh} can be represented as the solution of the fluid equations.

Recall the definition of $h$ from \eqref{eqofh}. $h$ can be written as follow by the order of $\e$ power, and we define $h_{-3/2}, h_{-1/2}, h_{1/2} $ and $h_{3/2}$ as
\begin{eqnarray}
\nonumber h&=& -\e^{-3/2} L(f_1)\\
\nonumber &&+\e^{-1/2}( - v  \cdot \nabla f_1 -L(f_2) + \G(f_1,f_1))\\
\nonumber &&+\e^{1/2}(-\pt_t f_1 - v\cdot \nabla f_2 + \G(f_1,f_2) + \G(f_2,f_1)) \\
\nonumber && + \e^{3/2}(-\pt_t f_2 +\G(f_2,f_2)) \\
\label{hdecomposedef} &=:& \e^{-3/2}h_{-3/2} +
\e^{-1/2}h_{-1/2} +
\e^{1/2}h_{1/2} +
\e^{3/2}h_{3/2}.
\end{eqnarray}
To prove the following proposition is the main goal of this subsection.
\begin{proposition} \label{hdecompose}
\begin{align*}
h_{-3/2} = h_{-1/2} = 0, h_{1/2},(h_{3/2} + v \cdot \pt_t u_2 \sqrt{\mu}) \perp \ker L, v \cdot \pt_t u_2 \sqrt{\mu} \perp \sqrt{\mu}
\end{align*}
\end{proposition}
Before we start to prove the proposition, let us prove the following lemma.
\begin{lemma} \label{gammasquarecal}
If $\sqrt{\mu}h \in \ker L$ Then,
$$
2\G (\sqrt{\mu}h,\sqrt{\mu}h) = L\left[\sqrt{\mu}h^2\right]
$$
\end{lemma}
\begin{proof}
By the definition of the $\G$ in \eqref{Gdefn}
\begin{eqnarray*}
\G(\sqrt{\mu}h,\sqrt{\mu}h)
&=& \iint_{\R^3 \times \S^2} B(\omega,{v-u}) \exp{\left(-\frac{\norm{u}^2}{2}\right)} \left[  h(u')h(v') -  h(u)h(v) \right] \dd \omega \dd u
\end{eqnarray*}
and
\begin{eqnarray*}
-L\left[\sqrt{\mu}h^2\right] &=& \G\left(\sqrt{\mu}h^2,\sqrt{\mu}\right) + \G\left(\sqrt{\mu},\sqrt{\mu}h^2\right)\\
&=& \iint_{\R^3 \times \S^2} B(\omega,{v-u}) \exp{\left(-\frac{\norm{u}^2}{2}\right)} \\ 
&&\left[ h(u')^2 + h(v')^2 - h(u)^2 - h(v)^2 \right] \dd \omega \dd u
\end{eqnarray*}
According to the definition of the $u', v'$ in \eqref{u'v'defn} and by simple calculation
\begin{eqnarray*}
&&\left[ h(u')^2 + h(v')^2 - h(u)^2 - h(v)^2 \right] + 2\left[  h(u')h(v') -  h(u)h(v) \right]\\
&&=\left(h(u')+h(v') \right)^2-\left(h(u)+h(v) \right)^2\\
&&=\left(\left(h(u')+h(v') \right)-\left(h(u)+h(v) \right)\right)\left(\left(h(u')+h(v') \right)+\left(h(u)+h(v) \right)\right)=0.
\end{eqnarray*}
Thus,
\[
2\G (\sqrt{\mu}h,\sqrt{\mu}h) = L\left[\sqrt{\mu}h^2\right]
\].
\end{proof}

Now prove the Proposition \ref{hdecompose}.
\begin{proof} (proof of the Proposition \ref{hdecompose})

Recall the equation \eqref{hdecomposedef}. By the definition of the $f_1$ in \eqref{deff1}
\begin{eqnarray*}
h_{-3/2}=-L(f_1) = 0.
\end{eqnarray*}
By the definition of the $f_2$ in \eqref{deff2}, $h_{-1/2}$ can be simplified as follows:
\begin{eqnarray*}
&&h_{-1/2}\\
&&=- v \cdot \nabla_{x} f_1 -L( f_2) + \G(f_1,f_1) \\
&&= -L\Bigg(\frac{1}{2} \sum_{i,j=1}^{3} \mathscr{A}_{ij} \left[\pt_{x_i}u_{j} + \pt_{x_j}u_{i} \right] + \sum_{i,j=1}^{3} \mathscr{B}_{i} \pt_{x_i} \th
 + L^{-1}\left[\G\left(f_1,f_2\right) \right]  \Bigg) \\
&&- L(v \cdot u_2 \sqrt{\mu}) - v \cdot \nabla_{x} f_1 + \G(f_1,f_1) \\
&& = -\frac{1}{2} \sum_{i,j=1}^{3} L\mathscr{A}_{ij} \left[\pt_{x_i}u_{j} + \pt_{x_j}u_{i} \right] - \sum_{i=1}^{3} L\mathscr{B}_{i} \pt_{x_i} \th  - v \cdot \nabla_{x} f_1 \\
&& = \frac{1}{2} \sum_{i,j=1}^{3}  \left(\sqrt{\mu} (v_i v_j -\frac{\norm{v}^2}{3} \delta_{ij} ) \right) \left[\pt_{x_i}u_{j} + \pt_{x_j}u_{i} \right]
+\sum_{i=1}^{3} \left(\sqrt{\mu} v_i \frac{\norm{v}^2-5 }{2} \right) \pt_{x_i} \th  \\
&& \hspace{5mm} - \sum_{i,j=1}^{3}  \sqrt{\mu} v_i v_j \pt_{x_i} u_j - \sum_{i=1}^{3} \sqrt{\mu} v_i \frac{\norm{v}^2-5 }{2}  \pt_{x_i} \th \\
&&=0.
\end{eqnarray*}
Next we need to calculate $h_{1/2}$,
\begin{eqnarray*}
&&h_{1/2}\\
&&= -\pt_t f_1 -v \cdot \nabla_{x} f_2 + \G(f_1,f_2)+\G(f_2,f_1) \\
&&=-\sqrt{\mu}\left(\pt_t \rho + \pt_t u \cdot v + \pt_t \th \frac{\norm{v}^2-3}{2} \right) \\
&&-\sqrt{\mu}\left( \frac{\norm{v}^2}{3} \nabla \cdot u_2  \right) - v\cdot \nabla_x \ip f_2\\
&&-\sqrt{\mu}\left( \sum_{i=1}^{3}\sum_{j=1}^{3}(v_i v_j -\frac{\norm{v}^2}{3} \delta_{ij} )\pt_i (u_2)_j \right) \\
&& + \G(f_1,f_2)+\G(f_2,f_1).
\end{eqnarray*}
\newline
\textbf{Step 1)} Calculation of $P(v\cdot \nabla_x \ip f_2)$.

Calculate $P(v\mathscr{A}_{ij})$ and $P(v\mathscr{B}_{i})$ from \eqref{ABdefine}.
Because of the oddness and perpendicularity to kernel implies
\begin{eqnarray*}
\inn{v_k \mathscr{A}_{ij},\sqrt{\mu}} = 0, \quad& \inn{v_k \mathscr{A}_{ij}, v_k \sqrt{\mu}} = 0,  &\quad \inn{v_k \mathscr{A}_{ij},\norm{v}^2\sqrt{\mu}} = 0,\\
\inn{v_k \mathscr{B}_{i},\sqrt{\mu}} = 0, \quad &\inn{v_k \mathscr{B}_{i},v_l\sqrt{\mu}} = 0.&
\end{eqnarray*}
Define
\begin{eqnarray*}
\inn{\mathscr{A}_{ij}, v_i v_j \sqrt{\mu}} = -\kappa, \quad \inn{\mathscr{B}_{i}, v_i \left(\frac{\norm{v}^2-3}{2} \right) \sqrt{\mu}} = -\eta.
\end{eqnarray*}
Then, 
\begin{eqnarray*}
\inn{v_k \mathscr{A}_{ij}, v_l \sqrt{\mu}} =- (\delta_{ik}\delta_{jl}+\delta_{il}\delta_{jk})\kappa, \quad \inn{v_k\mathscr{B}_{i}, \left(\frac{\norm{v}^2-3}{2} \right) \sqrt{\mu}} = -\delta_{ik}\eta,
\end{eqnarray*}
\begin{eqnarray*}
&&P\left(v \cdot \nabla_x \frac{1}{2} \sum_{i,j=1}^{3} \mathscr{A}_{ij} \left[\pt_{x_i}u_{j} + \pt_{x_j}u_{i} \right] \right)\\
&&= -\frac{1}{2} \kappa \sum_{i,j=1}^{3} \left[(\pt_i \pt_i u_j + \pt_j \pt_i u_i)v_j + (\pt_j \pt_i u_j + \pt_j \pt_j u_i)v_i\right]\\
&=&-\kappa \sum_{j=1}^{3} \left[(\Delta u_j + \pt_j \nabla \cdot u)v_j \right]\\
&=& -\kappa \sum_{j=1}^{3}\Delta u_j v_j \sqrt{\mu},
\end{eqnarray*}
\begin{eqnarray*}
P\left(v \cdot \nabla_x\sum_{i=1}^{3} \mathscr{B}_{i} \pt_{x_i} \th \right)&=& -\eta \Delta \th \frac{\norm{v}^2-3}{2} \sqrt{\mu}.
\end{eqnarray*}
\newline
\textbf{Step2)} Calculate of
$
P\left( v \cdot \nabla_x L^{-1}\G(f_1,f_1)\right).
$

Since
$$
\inn{ v \cdot \nabla_x \G(f_1,f_1), \psi} = \sum_{i=1}^3\inn{  \pt_i \G(f_1,f_1), v_i \psi},
$$
 we only need to focus on 
\begin{align} \label{Pf1f1}
\sum_{i=1}^3\inn{  \pt_i L^{-1} \G(f_1,f_1), v_i \frac{\norm{v}^2-5}{2} \sqrt{\mu}},
\sum_{i=1}^3\inn{  \pt_i L^{-1} \G(f_1,f_1), (v_i v_j -\frac{\norm{v}^2}{3} \delta_{ij} ) \sqrt{\mu}}.
\end{align}

By the lemma \ref{gammasquarecal}, \eqref{Pf1f1} become
$$ 
\sum_{i=1}^3\inn{  \pt_i \left(\rho + u \cdot v + \th \frac{\norm{v}^2-3}{2} \right)^2\sqrt{\mu}, v_i \frac{\norm{v}^2-5}{2} \sqrt{\mu}}
$$ and
$$
\sum_{i=1}^3\inn{  \pt_i \left(\rho + u \cdot v + \th \frac{\norm{v}^2-3}{2} \right)^2\sqrt{\mu}, (v_i v_j -\frac{\norm{v}^2}{3} \delta_{ij} ) \sqrt{\mu}}.
$$
By direct calculations
\begin{eqnarray*}
\sum_{i=1}^3\inn{  \pt_i \left(\rho + u \cdot v + \th \frac{\norm{v}^2-3}{2} \right)^2\sqrt{\mu}, v_i \frac{\norm{v}^2-5}{2} \sqrt{\mu}} &=& \frac{5}{2} \nabla \cdot (u \th)\\
&=&\frac{5}{2} (u \cdot \nabla)  \th,
\end{eqnarray*}
\begin{eqnarray*}
&&\sum_{i=1}^3\inn{  \pt_i \left(\rho + u \cdot v + \th \frac{\norm{v}^2-3}{2} \right)^2\sqrt{\mu}, (v_i v_j -\frac{\norm{v}^2}{3} \delta_{ij} ) \sqrt{\mu}} \\
&& =\sum_{i=1}^3 \pt_i (u_{i}u_j - \frac{1}{3}\norm{u}^2 \delta_{i,j})\\
&&= (u \cdot \nabla) u_j - \pt_{j} \frac{1}{3}\norm{u}^2.
\end{eqnarray*}
Then the coefficient of $(1,v,\norm{v}^2)\sqrt{\mu}$ for the $\P h_{1/2}$ can be calculated by the definition of the $f_1$ and $f_2$.

The coefficient of the $\sqrt{\mu}$ is
\begin{eqnarray*}
\rho_t + \nabla \cdot u_2 = 0.
\end{eqnarray*}
The coefficient of the $v\sqrt{\mu}$ is
\begin{eqnarray*}
u_t + (u\cdot \nabla)u +\nabla \norm{u}^2 - \kappa \Delta u  =0.
\end{eqnarray*}
The coefficient of the $\frac{\norm{v}^2-3}{2}\sqrt{\mu}$ is
\begin{eqnarray*}
\th_t + \nabla \cdot u_2 + (u\cdot \nabla)\th - \eta \Delta \th  =0.
\end{eqnarray*}
Thus, $h_{1/2} \perp \ker L$.
Lastly, $h_{3/2} = -\pt_t f_2 +\G(f_2,f_2)$ implies that $h_{3/2} + v \cdot \pt_t u_2 \sqrt{\mu} \perp \ker L$ and $v \cdot \pt_t u_2 \sqrt{\mu}\perp \sqrt{\mu}$.
\end{proof}

\subsection{Time decay of the fluid equations}

In this subsection, we will discuss time integrability of the $f_1, f_2$  and $h$. $f_1, f_2$  and $h$ can be written as the solution of the fluid equation. The main propose of this subsection is to prove the following proposition.

\begin{proposition} \label{prop23}
Recall $f_1, f_2$ and $h$ in \eqref{deff1}, \eqref{deff2} and \eqref{eqofh}. We have
\begin{align*}
&\normm{f_1+ \e f_2}_{L^{2}_{t} L^{\infty}_{x,v}},
\normm{\pt_t f_1+ \e \pt_t f_2}_{L^{2}_{t}L^{\infty}_{x,v}}, \\
&\normm{f_1+ \e f_2}_{L^{\infty}_{t}L^{\infty}_{x,v}},
\normm{\pt_t f_1+ \e \pt_t f_2}_{L^{\infty}_{t} L^{\infty}_{x,v}},\\
&\normm{h}_{L^2_{t}L^{2}_{x,v}}, \normm{\pt_t h}_{L^2_{t}L^{2}_{x,v}}, \normm{\pt_t u_2}_{L^{2}_{t}L^{2}_{x,v}}, \normm{\pt_t^2 u_2}_{L^{\infty}_{t}L^{2}_{x,v}}, \normm{\pt_t^3 u_2}_{L^{2}_{t}L^{2}_{x,v}}, \\
&\normm{h}_{L^\infty_{t}L^{2}_{x,v}}, \normm{\pt_t h}_{L^\infty_{t}L^{2}_{x,v}},  \normm{\pt_t u_2}_{L^{2}_{t}L^{\infty}_{x,v}}, \normm{\pt_t^2 u_2}_{L^{\infty}_{t}L^{\infty}_{x,v}}, \normm{\pt_t^3 u_2}_{L^{2}_{t}L^{\infty}_{x,v}},
\end{align*}
are bounded by $C(u_0, \th_0)$.
\end{proposition}

To understand $f_1, f_2$  and $h$, we only need to understand the solutions of the fluid equations : $u, \th$ and $\rho$.
Consider the following linear equation
\begin{eqnarray}  \label{stokeseq}
\pt_t u + \nabla p = \Delta u, \quad  \nabla \cdot u = 0  &\text{ in }& \Omega^{c}, \\
\pt_{t} \th = \Delta \th &\text{ in }& \Omega^{c} \label{heateq}, \\
u(x,0) = u_0 (x), \quad \th(x,0) = \th_{0} (x) &\text{ in }& \Omega^{c}, \nonumber\\
u(x) = 0, \quad \th(x) = 0 &\text{ on }& \pt \Omega.  \nonumber 
\end{eqnarray}
The following lemmas explain time decay of linear fluid problems.
\begin{lemma} \label{stokestimedecay}
(Time decay of stokes operator)

Let $A$ be a Stokes operator, and $e^{-At}$ forms Stokes semi-group. The solution of \eqref{stokeseq} exist, denoted as $u(t) = e^{-At} u_0$. In addition, the following decay estimate holds for $1 < p \le q \le \infty$
\begin{eqnarray*}
\normm{u(t)}_{L^q} &\lesssim& \e_{p}(t) (1+t)^{-\frac{3}{2}(\frac{1}{p}-\frac{1}{q})}, \\
t^{1/2} \normm{\nabla u(t)}_{L^q} &\lesssim& \e_{p}(t) (1+t)^{-\frac{3}{2}(\frac{1}{p}-\frac{1}{q})}, \\
t \normm{A u(t)}_{L^q} &\lesssim& \e_{p}(t) (1+t)^{-\frac{3}{2}(\frac{1}{p}-\frac{1}{q})}, \\
\end{eqnarray*}
with $\e_p(t) \lesssim \normm{u_0}_{p}$ and (for $p>1$) $\lim_{t \to \infty} \e_{p}(t) = 0$.
\end{lemma}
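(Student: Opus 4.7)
The plan is to follow the classical route of Iwashita and Borchers--Miyakawa, representing the Stokes semigroup via the Dunford integral
\[
e^{-tA} u_0 = \frac{1}{2\pi i}\int_{\G_0} e^{-\l t}(\l + A)^{-1} u_0\, \dd \l
\]
over a suitable sectorial contour $\G_0$, and reducing the whole lemma to $L^p$--$L^q$ resolvent estimates for $(\l + A)^{-1}$ in the exterior domain. First I would split the contour into a high-frequency part $|\l| \ge 1$ and a low-frequency part $|\l| \le 1$, and estimate each piece separately.

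For the high-frequency regime I would use a cutoff decomposition: pick $\chi \in C_c^\infty$ equal to $1$ near $\pt \O$, write the resolvent equation for $(\l + A)^{-1} u_0$ as a sum of a bounded-domain Stokes problem (on $\O^c \cap \operatorname{supp} \chi$) and a whole-space Stokes problem, and patch them together with a Bogovskii-type correction to restore the divergence-free constraint. Since both ingredients satisfy the classical sharp resolvent estimates
\[
\normm{(\l + A)^{-1}}_{L^p \to L^p} \lesssim |\l|^{-1}, \qquad \normm{\nabla(\l + A)^{-1}}_{L^p \to L^p} \lesssim |\l|^{-1/2},
\]
the glued resolvent inherits the same bounds in $\O^c$ for $|\l| \ge 1$, and the Dunford integral on this portion produces exponentially decaying contributions that are harmless.

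The hard part, and the principal obstacle, is the low-frequency analysis $|\l| \le 1$, where the three-dimensional exterior Stokes resolvent has a nontrivial expansion at $\l = 0$ whose leading singular term encodes the long-time $L^p$--$L^q$ decay. The target estimate there is
\[
\normm{(\l+A)^{-1}u_0}_{L^q} \lesssim |\l|^{\frac{3}{2}(\frac{1}{p}-\frac{1}{q}) - 1}\normm{u_0}_{L^p},
\]
together with the analogous bounds after applying $\nabla$ or $A$, which I would derive by constructing an approximate resolvent from the whole-space Oseen tensor (giving the correct decay rate) plus a boundary corrector obtained from a compact perturbation, and then removing the corrector via a Fredholm-type argument on the zero eigenspace in $L^p_\sigma(\O^c)$. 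Once these resolvent bounds are in hand, inserting them into the Dunford integral and performing the obvious rescaling $\l \mapsto \l/t$ yield the decay rates $(1+t)^{-\frac{3}{2}(\frac{1}{p} - \frac{1}{q})}$ with the stated weight $t^{1/2}$ in front of $\nabla u$ and $t$ in front of $A u$.

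Finally, to promote the constant to a function $\e_p(t)$ with $\lim_{t\to\infty}\e_p(t) = 0$ (and not merely a bounded constant), I would use a density/interpolation argument. For $u_0 \in C^\infty_{c,\sigma}(\O^c)$ I would choose an auxiliary exponent $p' \in (1,p)$, apply the already-proved $L^{p'}$--$L^q$ decay to obtain the faster rate $(1+t)^{-\frac{3}{2}(\frac{1}{p'} - \frac{1}{q})}$, and then use that $C^\infty_{c,\sigma}$ is dense in $L^p_\sigma$ (since $p>1$) together with the uniform $L^p$--$L^q$ bound to conclude $\e_p(t) \to 0$ for every $u_0 \in L^p_\sigma$ via a standard $\tfrac{\eta}{2}+\tfrac{\eta}{2}$ splitting. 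The heat-equation statement for $\th$ follows by the same scheme, using the Dirichlet Laplacian resolvent in $\O^c$ in place of the Stokes resolvent, where the corresponding low-frequency analysis is strictly simpler.
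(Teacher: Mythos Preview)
The paper does not prove this lemma at all: its entire proof is the sentence ``This is a result of the Theorem 1 of \cite{Wigner2000}.'' In other words, the statement is imported wholesale from Wiegner's exterior-domain decay paper, and no argument is given.

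Your proposal, by contrast, sketches an actual proof along the Iwashita/Borchers--Miyakawa lines: Dunford integral representation of $e^{-tA}$, high/low frequency splitting of the contour, cutoff-and-Bogovskii localization for the resolvent near the obstacle, and a low-frequency expansion of the exterior Stokes resolvent to extract the $L^p$--$L^q$ rate. This is indeed the standard machinery behind the cited result, and your outline of the $\e_p(t)\to 0$ upgrade via density in $L^p_\sigma$ and an auxiliary exponent $p'<p$ is also the correct idea. So your approach is not wrong, just vastly more ambitious than what the paper requires: for the purposes of this paper a one-line citation suffices, and reproducing the full resolvent analysis (especially the delicate low-frequency Fredholm argument in three dimensions) would be a substantial detour.
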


\begin{proof}
This is a result of the Theorem 1 of \cite{Wigner2000}.
\end{proof}

\begin{lemma}\label{heattimedecay}
(Time decay of heat operator)

$e^{-\Delta t}$ forms a semi-group. The solution of \eqref{heateq} exist, denoted as $\th(t) = e^{-\Delta t} \th_0$. In addition, the following decay estimate holds for $1 < p \le q \le \infty$
\begin{eqnarray*}
\normm{\th(t)}_{L^q} &\lesssim& \e_{p}(t) (1+t)^{-\frac{3}{2}(\frac{1}{p}-\frac{1}{q})}, \\
t^{1/2} \normm{\nabla \th(t)}_{L^q} &\lesssim& \e_{p}(t) (1+t)^{-\frac{3}{2}(\frac{1}{p}-\frac{1}{q})}, \\
t \normm{\Delta \th(t)}_{L^q} &\lesssim& \e_{p}(t) (1+t)^{-\frac{3}{2}(\frac{1}{p}-\frac{1}{q})}, \\
\end{eqnarray*}
with $\e_p(t) \lesssim \normm{\th_0}_{p}$ and (for $p>1$) $\lim_{t \to \infty} \e_{p}(t) = 0$.
\end{lemma}

\begin{proof}
This is a result of the Theorem 1.1 of \cite{Ishige2009}.
\end{proof}

Now we are ready to prove the time decay of nonlinear problem. Recall the fluid problem \eqref{nseq} and \eqref{foureq}.

\begin{lemma} \label{fluiddecay}
(Time decay of Navier-Stokes-Fourier system) 

Define $\e_{p} = \max\{\normm{u_0}_{p}, \normm{\pt_t u_0}_{p}, \normm{\th_0}_{p}, \normm{\pt_t \th_0}_{p} \}.$
If $\e_{p} \ll 1$, then the solution of \eqref{nseq} and \eqref{foureq} exist and the following decaying estimate holds, if $1 < p \le 3$, and $1< p < q \le \infty$ or $1< p = q < \infty$
\begin{eqnarray*}
\normm{u(t)}_{L^q} &\lesssim& \e_{p} (1+t)^{-\frac{3}{2}(\frac{1}{p}-\frac{1}{q})}, \\
t^{1/2} \normm{\nabla u(t)}_{L^q} &\lesssim& \e_{p} (1+t)^{-\frac{3}{2}(\frac{1}{p}-\frac{1}{q})}, \\
\normm{\th(t)}_{L^q} &\lesssim& \e_{p} (1+t)^{-\frac{3}{2}(\frac{1}{p}-\frac{1}{q})}, \\
t^{1/2} \normm{\nabla \th(t)}_{L^q} &\lesssim& \e_{p} (1+t)^{-\frac{3}{2}(\frac{1}{p}-\frac{1}{q}) }, \\
\normm{\pt_t u(t)}_{L^q} &\lesssim& \e_{p} (1+t)^{-\frac{3}{2}(\frac{1}{p}-\frac{1}{q})}, \\
t^{1/2} \normm{\pt_t  \nabla u(t)}_{L^q} &\lesssim& \e_{p} (1+t)^{-\frac{3}{2}(\frac{1}{p}-\frac{1}{q})}, \\
\normm{\pt_t \th(t)}_{L^q} &\lesssim& \e_{p} (1+t)^{-\frac{3}{2}(\frac{1}{p}-\frac{1}{q})}, \\
t^{1/2} \normm{\pt_t \nabla \th(t)}_{L^q} &\lesssim& \e_{p} (1+t)^{-\frac{3}{2}(\frac{1}{p}-\frac{1}{q}) }.
\end{eqnarray*}
\end{lemma}

\begin{proof}
According to the Duhamel's principle, the solution of the \eqref{nseq} and \eqref{foureq} can be written as
\begin{eqnarray*}
u(t) =  e^{-tA} u_0 - \int_{0}^{t} e^{-(t-s)A} P(u \cdot \nabla) u (s) \dd s 
\end{eqnarray*}
and
\begin{eqnarray*}
\th(t) =  e^{t \Delta} \th_0 - \int_{0}^{t} e^{(t-s)\Delta} (u \cdot \nabla) \th (s) \dd s .
\end{eqnarray*}
$A = - P \Delta = -\Delta P$, $P$ is a projection from $L^p$ to divergence free space.

To simplify the notation, define
\begin{eqnarray*}
G_1(u,v)(t) &=& - \int_{0}^{t} e^{-(t-s)A} P(u \cdot \nabla) v (s) \dd s, \\
G_2(u,v)(t) &=& - \int_{0}^{t} e^{(t-s)\Delta} (u \cdot \nabla) v (s) \dd s, \\
F_1(u,v)(t) &=& P(u \cdot \nabla) v (t), \\
F_2(u,v)(t) &=& (u \cdot \nabla) v (t).
\end{eqnarray*}
According to the estimate of the heat and Stokes operator (Lemma \ref{stokestimedecay}, Lemma \ref{heattimedecay}), 
\begin{eqnarray*}
\normm{e^{-tA} u_0}_{q} &\le& C t^{-(3/p - 3/q)/2}\normm{u_0}_{p}, \\
\normm{e^{t \Delta} \th_0}_{q} &\le& C t^{-(3/p - 3/q)/2}\normm{\th_0}_{p}, \\
\normm{\nabla e^{-tA} u_0}_{q} &\le& C t^{-(1+3/p - 3/q)/2}\normm{u_0}_{p}, \\
\normm{\nabla  e^{t \Delta} \th_0}_{q} &\le& C t^{-(1+ 3/p - 3/q)/2}\normm{\th_0}_{p}.
\end{eqnarray*}
By the Holder inequality,
\begin{eqnarray*}
\normm{F_{i}(u,v)}_{p} \le C \normm{u}_{r} \normm{\nabla v}_{s}, \quad 1/p = 1/r +1/s, \quad i=1,2.
\end{eqnarray*}
Combining the above two inequalities we can get
\begin{eqnarray}
\label{gammaG}
\normm{G(u,v)(t)}_{3/\g} &\le&  C \int_{0}^{t} (t-s)^{-(\a +\b - \g)/2} \normm{u(s)}_{3/\a} \normm{\nabla v(s)}_{3/\b} \dd s, \\
\label{gammanablaG}
\normm{\nabla G(u,v)(t)}_{3/\g} &\le&  C \int_{0}^{t} (t-s)^{-(1+ \a +\b - \g)/2} \normm{u(s)}_{3/\a} \normm{\nabla v(s)}_{3/\b} \dd s.
\end{eqnarray}

First, set $\g = \a = \delta$, $\b =1$ for \eqref{gammaG} and set $\a = \delta$, $\b= \g =1$ for \eqref{gammanablaG}, Define the norm as $\normm{t^{(1-\delta)/2} u}_{3/\delta} = K(u)$, $\normm{t^{1/2} \nabla u}_{3} = K'(u)$. Then, 
\begin{eqnarray*}
K(u) &\le& \normm{u_0}_{3} + c K(u) K'(u), \\ 
K'(u) &\le& \normm{\nabla u_0}_{3} + c K(u) K'(u), \\
K(\pt_t u) &\le& \normm{\pt_t u_0}_{3} + c K(\pt_t u) K'(u) +  K(u) K'(\pt_t  u),\\ 
K'(\pt_t u) &\le& \normm{\nabla \pt_t u_0}_{3} + c K(\pt_t u) K'(u) +  K(u) K'(\pt_t  u), \\
K(\th) &\le& \normm{\th_0}_{3} + c K(u) K'(\th), \\ 
K'(\th) &\le& \normm{\nabla u_0}_{3} + c K(u) K'(\th), \\
K(\pt_t \th) &\le& \normm{\pt_t \th_0}_{3} + c K(\pt_t u) K'(\th) +  K(u) K'(\pt_t  \th),\\ 
K'(\pt_t \th) &\le& \normm{\nabla \pt_t \th_0}_{3} + c K(\pt_t u) K'(\th) +  K(u) K'(\pt_t  \th).
\end{eqnarray*}
Since $\int_{0}^{t} (t-s)^{-1/2}s^{-1+\delta/2} \dd s < \infty$, this leads to the closure of the estimate.
So, we can get decay of $u$(or $\th$) as
\begin{eqnarray*}
\normm{u}_{3/\delta} &\lesssim& t^{-(1-\d)/2} \normm{u_0}_{3}, \\
\normm{\nabla u}_{3} &\lesssim& t^{-1/2} \normm{u_0}_{3}. \\
\end{eqnarray*}
Next, we take $\g = 3/q$, $\a = \d$ and $\b =1$ for \eqref{gammaG}. Then we can obtain,
\begin{eqnarray*}
\normm{u(t)}_{q} &\le& \normm{e^{-t A} u_0} + c K(u) K'(u) \int_{0}^{t} (t-s)^{-(1+\d -3/q)/2} s^{-(1-\d/2)} \dd s \\
&\le& C \normm{u_0}_{3} t^{-(1-3/q)/2}, \\
\normm{\nabla u(t)}_{q} &\le& \normm{\nabla e^{-t A} u_0} + c K(u) K'(u) \int_{0}^{t} (t-s)^{-(2+\d -3/q)/2} s^{-(1-\d/2)} \dd s \\
&\le& C \normm{u_0}_{3} t^{-(2-3/q)/2}.
\end{eqnarray*}
Thus, we can prove the lemma \ref{fluiddecay} for $p=3 \le q$. (For the estimate of $\nabla u$ we need $\d < 3/q$ condition.)

Next we think about the case when $1 < p=q < \infty$. Let us take $\a = \g = 3/p$, $0<\b<1$ for \eqref{gammaG} and we know that $\nabla u$(or $\nabla \th, \pt_t u, \pt_t \th$) then we can get 
\begin{eqnarray*}
\normm{G(u,v)(t)}_{p} &\le&  C \int_{0}^{t} (t-s)^{-\b/2} \normm{u(s)}_{p} \normm{\nabla v(s)}_{3/\b} \dd s \\
& \le & C \int_{0}^{t} (t-s)^{-\b/2} s^{-(1-\b/2)} \normm{u(s)}_{p} \normm{s^{1-\b/2}\nabla v(s)}_{n/\b} \dd s,\\
\normm{\nabla G(u,v)(t)}_{p} &\le&  C \int_{0}^{t} (t-s)^{-(1+\b)/2}  s^{-(1-\b/2)} \normm{u(s)}_{p} \normm{ s^{1-\b/2}\nabla v(s)}_{3/\b} \dd s.
\end{eqnarray*}
By the simple calculation
\begin{eqnarray*}
\int_{0}^{t} (t-s)^{-\b/2} s^{-(1-\b/2)} \dd s &\sim& 1, \\
\int_{0}^{t} (t-s)^{-(1+\b)/2} s^{-(1-\b/2)} \dd s &\sim& t^{-1/2}.
\end{eqnarray*}
Thus,
\begin{eqnarray*}
\normm{u(t)}_{p} &\le& \normm{u_0}_{p} + C \normm{u_0}_{p} \sup_{s\le t}\normm{u(s)}_{p}, \\
\normm{\nabla u(t)}_{p} &\le& t^{-1/2} (\normm{u_0}_{p} + C \normm{u_0}_{p} \sup_{s\le t}\normm{u(s)}_{p}).
\end{eqnarray*}
We can get same inequality for $\pt_t u, \th$ and $\pt_t \th$ too.
So, we proved the theorem for $p=q$.
Next we move on to general $p$ and $q$.

\begin{lemma} \label{lem27}
For each $p \le q \le \infty$, there is some $0<\g<1$ such that
\begin{eqnarray*}
\normm{u(t)}_{q}, \normm{\pt_t u(t)}_{q}, \normm{\th(t)}_{q}, \normm{\pt_t \th(t)}_{q}  &\le& c \normm{u(\g t)}_{p} t^{-\frac{3}{2}(\frac{1}{p}-\frac{1}{q})}
\end{eqnarray*}
\end{lemma}
\begin{proof}
If $p \ge 3/2$ let $k_j = p (3/2)^j$ and $k \ge k_0$. By the Duhamel's principle,
\begin{eqnarray*}
u(t) &=&  e^{-t/2A} u(t/2) - \int_{t/2}^{t} e^{-(t-s)A} P(u \cdot \nabla) u (s) \dd s \\
\th(t) &=&  e^{t/2 \Delta} \th(t/2) - \int_{t/2}^{t} e^{(t-s)\Delta} (u \cdot \nabla) \th (s) \dd s 
\end{eqnarray*}
Thus,
\begin{eqnarray*}
\normm{u(t)}_{3k/2} &\le&  c \normm{u(t/2)}_{k} t^{-1/2 k} \\
&&+ c \int_{t/2}^{t} (t-s)^{-3/2((3+k)/3k-2/3k)} \normm{u \nabla u (s)}_{3k/(3+k)} \dd s \\
&\le& c \normm{u(t/2)}_{k} t^{-1/2 k} + c \int_{t/2}^{t} (t-s)^{-(k+1)/2k} t^{-1/2} \normm{u (s)}_{k} \dd s \\
&\le & c \sup_{t/2\le s \le t}\normm{u(s)}_{k} t^{-1/2 k}
\end{eqnarray*}
We obtain the following inequality by iteratively using the previous argument
\begin{eqnarray*}
\normm{u(t)}_{k_j} \le c_j \sup_{2^{-j} t \le s \le t}\normm{u(s)}_{p} t^{-3/2 (1/p-1/k_j)}.
\end{eqnarray*}
As $k_j>q$ for some $j$, interpolation with $p=q$ case then we have,
\begin{eqnarray*}
\normm{u(t)}_{q}  &\le& c \normm{u(\g t)}_{p} t^{-\frac{3}{2}(\frac{1}{p}-\frac{1}{q})}
\end{eqnarray*}
for $3/2 \le p \le q < \infty $. Since we know that the Lemma \ref{fluiddecay} holds for $p=3 \le q \le \infty$, we also can conclude $u$ estimate of Lemma \ref{fluiddecay} for $3/2 \le p \le q \le \infty $.
We can get same results for $\normm{\pt_t u(t)}_{q}, \normm{\th(t)}_{q}, \normm{\pt_t \th(t)}_{q}$.
Now assume $p< 3/2$, define 
\begin{eqnarray*}
L := \sup_{r \le t/2} \normm{u(t/2+r)}_{3/2} r^{3/2(1/p- 2/3)}.
\end{eqnarray*}
Then,
\begin{eqnarray*}
L &\le&  c \normm{u(t/2)}_{p} r^{-3/2(1/p- 2/3)} \\
&& + c \int_{t/2}^{t/2+r} (t/2+r-s)^{-1/2} \normm{\nabla u (s)}_{3} \normm{ u (s)}_{3/2} \dd s \\
&\le& c \normm{u(t/2)}_{p} r^{-3/2(1/p- 2/3)} \\
&& + c t^{-1/2} \normm{u_0}_{3}\int_{0}^{r} (r-x)^{-1/2} L x^{-3/2(1/p- 2/3)} \dd s.
\end{eqnarray*}
Thus, $L \le c \normm{u(t/2)}_{p} + c_0 \normm{u_0}_{3} L$, with $c_0$ is not depend on $p$. If $c_0 \normm{u_0}_{3} <1$, we can have, 
\begin{eqnarray*}
\normm{u(t)}_{3/2} \le c t^{-3/2(1/p- 2/3)} \normm{u(t/2)}_{p}.
\end{eqnarray*}
We can apply the Lemma \ref{lem27} with $q=3/2$ then we can complete the Lemma \ref{lem27}.
\end{proof}

So, we have a result for $1<p < q \le \infty$ and $1< p=q< \infty$.
The following lemmas prove the similar result for $\normm{\nabla u(s)}_{q}$.
\begin{lemma}
$1<q\le 3$ then,
\begin{eqnarray}
t^{1/2} \normm{\nabla u(t)}_{q} &\le c \normm{u(t/2)}_{q}, \\
t^{1/2} \normm{\nabla \th(t)}_{q} &\le c \normm{\th(t/2)}_{q}, \\
t^{1/2} \normm{\nabla \pt_t u(t)}_{q} &\le c \normm{\pt_tu(t/2)}_{q}, \\
t^{1/2} \normm{\nabla \pt_t \th(t)}_{q} &\le c \normm{\pt_t \th(t/2)}_{q}.
\end{eqnarray}
\end{lemma}
\begin{proof}
\begin{eqnarray*}
u(t) &=&  e^{-t/2A} u(t/2) - \int_{t/2}^{t} e^{-(t-s)A} P(u \cdot \nabla) u (s) \dd s \\
\th(t) &=&  e^{t/2 \Delta} \th(t/2) - \int_{t/2}^{t} e^{(t-s)\Delta} (u \cdot \nabla) \th (s) \dd s 
\end{eqnarray*}
Thus,
\begin{eqnarray*}
&& \normm{\nabla u(t/2+r)}_{q} \\
&& \le  c r^{-1/2} \normm{u(t/2)}_{q} + \int_{t/2}^{t/2+r} (t/2+r-s)^{-1/2}  \normm{P(u \cdot \nabla) u (s)}_{q} \dd s \\
&& \le  c r^{-1/2} \normm{u(t/2)}_{q} + \int_{t/2}^{t/2+r} (t/2+r-s)^{-1/2}  \normm{\nabla u(s)}_{q} \normm{u(s)}_{\infty} \dd s \\
\\
&& \le  c r^{-1/2} \normm{u(t/2)}_{q} + \int_{t/2}^{t/2+r} (t/2+r-s)^{-1/2} s^{-1/2}\normm{u_0}_{3} \normm{\nabla u(s)}_{q}  \dd s
\end{eqnarray*}
Let us define
\begin{eqnarray*}
Q : = \sup_{0 \le r \le t/2} r^{1/2} \normm{\nabla u(t/2+r)}_{q}
\end{eqnarray*}
Then,
\begin{eqnarray*}
Q &\le& c \normm{u(t/2)}_{q} + c\normm{u_0}_{3} \sup_{0 \le r \le t/2} \left\{ r^{1/2} \int_{0}^{r} (r-s)^{-1/2} s^{-1/2} (s+t/2)^{-1/2} Q  \dd s \right\} \\
&\le& c \normm{u(t/2)}_{q} + c\normm{u_0}_{3} Q \\
\end{eqnarray*}
So, we complete the proof for small initial data.
$\th , \pt_t u, \pt_t \th$ are the exactly same.
\end{proof}
Thus, the Lemma \ref{fluiddecay} holds.
\end{proof}

Now let us prove the main proposition of this subsection.
\begin{proof} (proof of the Proposition \ref{prop23})

Recall $f_1, f_2$ and $h$ in \eqref{deff1}, \eqref{deff2} and \eqref{eqofh} and Proposition \ref{hdecompose}
\begin{eqnarray*}
f_1 &=& \left(\rho(x) + u(x) \cdot v + \th (x)\frac{\norm{v}^2-3}{2} \right)\sqrt{\mu}, \\
f_2 &=& \frac{1}{2} \sum_{i,j=1}^{3} \mathscr{A}_{ij} \left[\pt_{x_i}u_{j} + \pt_{x_j}u_{i} \right] + \sum_{i,j=1}^{3} \mathscr{B}_{i} \pt_{x_i} \th\\
\nonumber && + L^{-1}\left[\G\left(f_1,f_1\right) \right] + v \cdot u_{2} \sqrt{\mu},
\end{eqnarray*}
\begin{eqnarray}
h = \e^{1/2}h_1 + \e^{3/2} h_2,
\end{eqnarray}
where
\begin{eqnarray*}
h_1 &=& \ip (v \cdot \nabla f_2) + \G(f_1, f_2) +\G(f_2, f_1) \\
h_2 &=& \G(f_2,f_2).
\end{eqnarray*}
By Lemma \ref{fluiddecay}, the following inequality holds for small $\d$
\begin{eqnarray*}
\normm{\rho (t)}_{L^{\infty}_{x}} &\lesssim& (1+t)^{-\frac{3}{2}+\d}, \\
\normm{u (t)}_{L^{\infty}_{x}} &\lesssim& (1+t)^{-\frac{3}{2}+\d}, \\
\normm{\pt_t \rho (t)}_{L^{\infty}_{x}} &\lesssim& (1+t)^{-\frac{3}{2}+\d}, \\
\normm{\pt_t u (t)}_{L^{\infty}_{x}} &\lesssim& (1+t)^{-\frac{3}{2}+\d},
\end{eqnarray*}
where $\sim$ is depend on the initial data $u_0, \th_0$. Thus, 
$\normm{f_1}_{L^2_{t} L^{\infty}_x}, \normm{\pt_t f_1}_{L^2_{t} L^{\infty}_x}, \normm{f_1}_{L^\infty_{t} L^{\infty}_x}$ and
$ \normm{\pt_t f_1}_{L^\infty_{t} L^{\infty}_x}$ are bounded by $C(u_0, \th_0)$.

To estimate the decay of $f_2$, we have
\begin{eqnarray*}
\normm{\nabla \rho (t)}_{L^{\infty}_{x}} &\lesssim& (1+t)^{-2+\d} \\
\normm{\nabla u (t)}_{L^{\infty}_{x}} &\lesssim& (1+t)^{-2+\d} \\
\normm{\pt_t \nabla \rho (t)}_{L^{\infty}_{x}} &\lesssim& (1+t)^{-2+\d} \\
\normm{\pt_t  \nabla u (t)}_{L^{\infty}_{x}} &\lesssim& (1+t)^{-2+\d} .
\end{eqnarray*}

In addition, $u_2 = - \nabla \Delta^{-1} \rho_t =  - \nabla \Delta^{-1}(\nabla \cdot (u \th) - \eta \Delta \th )$.
Thus, 
$\normm{f_2}_{L^2_{t} L^{\infty}_x}, \normm{\pt_t f_2}_{L^2_{t} L^{\infty}_x}, \normm{f_2}_{L^\infty_{t} L^{\infty}_x} $, $\normm{\pt_t f_2}_{L^\infty_{t} L^{\infty}_x}$, $\normm{\pt_t u_2}_{L^2_{t} L^{2}_{x,v}}$, $\normm{\pt_t^2 u_2}_{L^\infty_{t} L^{2}_{x,v}}$, $\normm{\pt_t^3 u_2}_{L^2_{t} L^{2}_{x,v}}$, $\normm{\pt_t u_2}_{L^2_{t} L^{\infty}_{x,v}}$, $\normm{\pt_t^2 u_2}_{L^\infty_{t} L^{\infty}_{x,v}}$, and $\normm{\pt_t^3 u_2}_{L^2_{t} L^{\infty}_{x,v}}$ are bounded by $C(u_0, \th_0)$.

Now, move on $h_1$ and $h_2$
We know that $\normm{f_1(t)}_{L^{\infty}_{x}} \lesssim (1+t)^{-\frac{3}{2}+\d}$. By 
Lemma \ref{fluiddecay},
\begin{eqnarray*}
\normm{\nabla \rho (t)}_{L^{2}_{x}} &\lesssim& (1+t)^{-\frac{5}{4}+\d}, \\
\normm{\nabla u (t)}_{L^{2}_{x}} &\lesssim& (1+t)^{-\frac{5}{4}+\d}, \\
\normm{\pt_t \nabla \rho (t)}_{L^{2}_{x}} &\lesssim& (1+t)^{-\frac{5}{4}+\d}, \\
\normm{\pt_t  \nabla u (t)}_{L^{2}_{x}} &\lesssim& (1+t)^{-\frac{5}{4}+\d},
\end{eqnarray*}
which means $\normm{h}_{L^2_{t} L^{2}_{x}}, \normm{\pt_t h}_{L^2_{t} L^{2}_{x}}, \normm{h}_{L^\infty_{t} L^{2}_{x}}$ and $\normm{\pt_t h}_{L^\infty_{t} L^{2}_{x}}$ are bounded by $C(u_0, \th_0)$.
\end{proof}

\section{Energy estimate} \label{sec3}
In this section, we will prove the existence and the energy estimate of the following linear problem:

\begin{align}
\label{Boltzwwithg}
\e \pt_t f + v \cdot \nabla_x f + \e^{-1} L f = g & \quad \text{ in } \R_{+} \times \Omega^c \times \R^3,
\\ \nonumber
f|_{t=0} = f_0, \quad f_{\g_{-}} = P_{\gamma} f +r. &
\end{align}

\begin{proposition}
\label{energyestimate} Suppose $g \in L^2(\R_+ \times \Omega^{c} \times \R^3)$, and $r \in L^2(\R_+ \times \g_{-})$ such that, for all $t>0$,
\begin{align*}
\iint_{\Omega^c \times \R^3} g(t,x,v) \sqrt{\mu} \dd v \dd x = \int_{\g_{-}} r(t,x,v) \sqrt{\mu} \dd \g = 0.
\end{align*}
Then, for any sufficiently small $\e$, there exists a unique solution to the problem \eqref{Boltzwwithg} such that
\begin{align*}
\iint_{\Omega^c \times \R^3} f(t,x,v) \sqrt{\mu} \dd v \dd x = 0,
\end{align*}
for all $t \ge 0$.
Moreover, the following energy estimate holds:
\begin{align*}
& \pt_t \normm{f}^2_{L^2_x L^2_v} + \e^{-1}\int_{\gamma_{+}} (1-P_{\gamma}) f^2 d\gamma + \e^{-2} \kappa \normm{\ip f}^2_{{\nu}} \\
& \lesssim \e^{-1}\int_{\gamma_{+}} r^2 d\gamma + \e^{-1}\int_{\Omega^c \times \R^3} fg \dd x \dd v.
\end{align*}
\end{proposition} 
\begin{proof}
First, we prove the existence of the solution to \eqref{Boltzwwithg}. Define the approximating sequence with $f^0 := f_0$:
\begin{align} \label{approximatel}
\pt_t f^{l+1} + \e^{-1} v \cdot \nabla f^{l+1} + \e^{-2} \nu f^{l+1} - \e^{-2} K f^{l} = \e^{-1}g, \\ \nonumber
f^{l+1}|_{t=0} = f_0, \quad f^{l+1}_{\g_{-}} = \left(1-\frac{\e}{j} \right)P_{\gamma} f^l +r. &
\end{align}
\newline
\textbf{STEP 1)} Fix $j$, $f^l \to f_j$ as $l \to \infty$. Notice that,
\begin{align*}
\norm{\int_{\R^3 } K f^l f^{l+1} \dd v} \le \iint_{\R^3 \times \R^3} \norm{k(v,u)}^{1/2} \norm{f^l(u)} \norm{k(v,u)}^{1/2} \norm{f^{l+1}(v)} \dd u \dd v \\
\le \sqrt{\int_{u} \norm{f^l(u)}^2 \int_{v} \norm{k(v,u)} } \sqrt{\int_{v} \norm{f^{l+1}(v)}^2 \int_{u} \norm{k(v,u)} } \lesssim \normm{f^l}_2^2 + \normm{f^{l+1}}_2^2,
\end{align*}
where we used $\sup_u \int_{v} \norm{k(v,u)} + \sup_v \int_{u} \norm{k(v,u)} < \infty$.

Note that, since $\int_{\g_{-}} r \sqrt{\mu} \dd \g = 0$, we have
\begin{align} \label{flboundary}
\norm{\left(1-\frac{\e}{j} \right)P_{\gamma} f^l + r}^2_{2,-} = \norm{\left(1-\frac{\e}{j} \right)P_{\gamma} f^l}^2_{2,-} + \norm{r}^2_{2}.
\end{align}
Multiply $f^{l+1}$ by \eqref{approximatel} and integrate by space and velocity, we can get:
\begin{align*}
\pt_t \frac{1}{2} \int_{x,v} {f^{l+1}}^2  \dd x \dd v + \e^{-1} \frac{1}{2} \int_{\gamma_{+}} {f^{l+1}}^2 \dd \gamma + \e^{-2}  \int_{x,v} \nu {f^{l+1}}^2 \dd x \dd v \\
= \e^{-1} \frac{1}{2} \int_{\gamma_{-}} {f^{l+1}}^2 \dd \gamma + \e^{-2} \int_{x,v} f^{l+1}  K f^{l}\dd x \dd v + \e^{-1} \int_{x,v} g  f^{l+1}\dd x \dd v.
\end{align*}
Apply \eqref{flboundary}, take time integration we can get:
\begin{align*}
&\frac{1}{2} \normm{f^{l+1}(t)}_2^2 + \e^{-2} \int_{0}^t \normm{f^{l+1}}_{\nu}^2 + \e^{-1} \frac{1}{2} \int_{0}^t \norm{f^{l+1}}_{2,+}^2\\
&\le \e^{-1} \frac{1}{2} \left[\left(1-\frac{\e}{j} \right)^2 \right] \int_{0}^{t} \norm{P_{\gamma} f^l}_{2,-}^2 + \e^{-1} \frac{1}{2} \int_{0}^t \norm{r}^2_2 + C \e^{-1}\int_{0}^t \max_{1 \le i \le l+1} \normm{f^i}^2_2 \\
& + \int_{0}^t \normm{g}^2_2 + \frac{1}{2}\normm{f_0}^2_2 \\
&\le \e^{-1} \frac{1}{2} \left[\left(1-\frac{\e}{j} \right)^2 \right] \int_{0}^{t} \norm{f^l}_{2,+}^2 + \e^{-1} \frac{1}{2} \int_{0}^t \norm{r}^2_2 + C \e^{-1}\int_{0}^t \max_{1 \le i \le l+1} \normm{f^i}^2_2 \\
& + \int_{0}^t \normm{g}^2_2 + \frac{1}{2}\normm{f_0}^2_2.
\end{align*}

Set $\eta = \left(1-\frac{\e}{j} \right)^2 <1$. Now use this inequality to bound for $\e^{-1} \int_{0}^t \norm{f^l}_{2,-}^2$ and iterate:
\begin{align*}
&\frac{1}{2} \normm{f^{l+1}(t)}_2^2 + \e^{-2} \int_{0}^t \normm{f^{l+1}}_{\nu}^2 + \e^{-1} \frac{1}{2} \int_{0}^t \norm{f^{l+1}}_{2,+}^2\\
& \le \eta \bigg\{ \e^{-1} \eta \frac{1}{2} \int_{0}^{t} \norm{f^{l-1}}_{2,+}^2 + \e^{-1} \frac{1}{2} \int_{0}^t \norm{r}^2_2 + C \e^{-1}\int_{0}^t \max_{1 \le i \le l+1} \normm{f^i}^2_2\\
& + \int_{0}^t \normm{g}^2_2 + \frac{1}{2}\normm{f_0}^2_2  \bigg\} \\
&+ \e^{-1} \frac{1}{2} \int_{0}^t \norm{r}^2_2 + C \e^{-1}\int_{0}^t \max_{1 \le i \le l+1} \normm{f^i}^2_2 + \int_{0}^t \normm{g}^2_2 + \frac{1}{2}\normm{f_0}^2_2 \\
&= \eta^2 \e^{-1} \frac{1}{2}\int_{0}^{t} \norm{f^{l-1}}_{2,+}^2 \\
&+ (1+\eta) \bigg\{ \e^{-1} \frac{1}{2} \int_{0}^t \norm{r}^2_2 + C \e^{-1}\int_{0}^t \max_{1 \le i \le l+1} \normm{f^i}^2_2 + \int_{0}^t \normm{g}^2_2 + \frac{1}{2}\normm{f_0}^2_2 \bigg\}\\
& \vdots \\
&\le \eta^{l+1} \e^{-1} \frac{1}{2} \int_{0}^{t} \norm{f^{0}}_{2,+}^2 \\
&+ \frac{(1-\eta)^{l+1}}{(1-\eta)} \bigg\{ \e^{-1} \frac{1}{2} \int_{0}^t \norm{r}^2_2 + C \e^{-1}\int_{0}^t \max_{1 \le i \le l+1} \normm{f^i}^2_2 + \int_{0}^t \normm{g}^2_2 + \frac{1}{2}\normm{f_0}^2_2 \bigg\}.
\end{align*}
Since $f^0 = f_0$,
\begin{align*}
\max_{1 \le i \le l+1} \normm{f^i}^2_2 \lesssim_{\eta, j}& \e^{-1} \int_{0}^t \norm{r}^2_2 + \e^{-1}\int_{0}^t \max_{1 \le i \le l+1} \normm{f^i}^2_2 + \int_{0}^t \normm{g}^2_2 + \normm{f_0}^2_2 \\
& + t \norm{f_0}_{2,+}^2.
\end{align*}
By Gronwall's lemma, we have, for fixed $t >0$,
\begin{align*}
\max_{1 \le i \le l+1} \normm{f^i}^2_2 \lesssim_{\eta, j, \e, t}& \left\{ \e^{-1} \int_{0}^t \norm{r}^2_2 +  \int_{0}^t \normm{g}^2_2 + \normm{f_0}^2_2 + \norm{f_0}_{2,+}^2 \right\}.
\end{align*}
This leads to
\begin{align*}
\max_{1 \le i \le l+1} \left\{\normm{f^i}^2_2 + \e^{-2} \int_{0}^t \normm{f^{i}}_{\nu}^2 + \e^{-1} \int_{0}^t \norm{f^{i}}_{2,+}^2 \right\} \\
\lesssim_{\eta, j, \e, t} \left\{ \e^{-1} \int_{0}^t \norm{r}^2_2 +  \int_{0}^t \normm{g}^2_2 + \normm{f_0}^2_2 + \norm{f_0}_{2,+}^2 \right\}.
\end{align*}
Upon taking the difference, we have
\begin{align*}
\pt_t [f^{l+1} -f^l] + \e^{-1} v \cdot \nabla [f^{l+1} -f^l] + \e^{-2} \nu [f^{l+1} -f^l] = \e^{-2} K [f^{l} -f^{l-1}],
\end{align*}
with $[f^{l+1} -f^l](0) = 0$ and $[f^{l+1} -f^l]|_{\g_-} = \left(1-\frac{\e}{j} \right)
P_{\gamma}[f^{l+1} -f^l]$.

Applying previous iteration to $[f^{l+1} -f^l]$ yields 
\begin{align*}
&\frac{1}{2} \normm{f^{l+1}(t) -f^l(t)}^2_2 + \e^{-2} \int_{0}^t \normm{f^{l+1}(s) -f^l(s)}_{\nu}^2 \dd s \\
&+ \e^{-1} \frac{1}{2} \int_0^{t} \norm{f^{l+1}(s) -f^l(s)}^2_{2,+} \dd s \\
&\le \eta \e^{-1} \frac{1}{2} \int_{0}^t \norm{f^{l}(s) -f^{l-1}(s)}_{2,+}^2 \dd s + C \int_{0}^t \normm{f^{l}(s) -f^{l-1}(s)}^2_2 \dd s \\
&\le \eta \left\{\e^{-1} \frac{1}{2} \int_{0}^t \norm{f^{l}(s) -f^{l-1}(s)}_{2,+}^2 \dd s + \sup_{0 \le s \le T} \normm{f^{l}(s) -f^{l-1}(s)}^2_2 \right\},
\end{align*}
for $TC < \eta < 1$. This implies that $f^l$ is a Cauchy sequence with respect to the norm
\begin{align*}
\e^{-1} \int_{0}^t \norm{f^{l}(s)}_{2,+}^2 \dd s + \sup_{0 \le s \le T} \normm{f^{l}(s) }^2_2, 
\end{align*}
in $[0,T]$. Repeating the argument for $[0,T], [T,2T]....$ we deduce that for time $t$, there exist a unique limit function $f^l \to f_j$ such that
\begin{align} \label{eqforfj}
\pt_t f_j + \e^{-1} v \cdot \nabla f_j + \e^{-2} L f_j = \e^{-1}g
\end{align}
\newline
\textbf{STEP 2)} Let $j \to \infty$. Multiply $f_j$ by \eqref{eqforfj} and integrate by space, velocity and time and use integration by parts, we deduce
\begin{align*}
&\frac{1}{2} \normm{f_j(t)}^2_2 + \e^{-2}  \int_{0}^t \normm{\ip f_j(s)}^2_{\nu} \dd  s + \e^{-1} \frac{1}{2} \int_{0}^t \norm{(1-P_{\gamma}) f_j(s)}^2_{2,+} \dd  s\\
& \le \e^{-1} \frac{1}{2} \int_0^t \norm{r(s)}_{2,+}^2 \dd s + \frac{\e}{2j} \e^{-1} \int_0^t \norm{P_{\gamma} f_j(s)}^2_{2,+} \dd s + \int_{0}^t \normm{\nu^{-1/2}\ip g(s)}_2^2 \dd s \\
& + \e^{-2} \int_0^t \normm{\P g(s)}^2_2 \dd s + \frac{1}{2} \normm{f_0}^2_2.
\end{align*}
By trace theorem(lemma 2.3 of \cite{Esposito2018-apde}),
\begin{align*}
&\int_0^t \norm{P_{\gamma} f_j(s)}^2_{2,+} \dd s  \\
&\lesssim \int_0^t \norm{\1_{\g_{+}^\d} f_j(s)}_{2,+}^2 \dd s + \e^{-1} \int_0^t \norm{(1-P_{\gamma}) f_j(s)}_{2,+}^2 \dd s \\
&\lesssim \e \normm{f_j(0)}^2_2 + \e \int_{0}^t \normm{f_j(s)}^2_2 \dd s  + \e \int_0^t \iint_{\Omega^c \times \R^3} \norm{[\pt_t + \e^{-1} v \cdot \nabla ]f_j f_j} \dd x \dd v \dd s \\
&+ \e^{-1} \int_0^t \norm{(1-P_{\gamma}) f_j(s)}_{2,+}^2 \dd s\\
& \lesssim \e \normm{f_j(0)}^2_2 + \e \int_{0}^t \normm{f_j(s)}^2_2 \dd s + \int_0^t \e^{-2} \norm{\ip f_j(s)}_{\nu}^2 \dd s \\
& + \int_0^t \iint_{\Omega^c \times \R^3} \norm{g(s) f_j(s)} \dd x \dd v \dd s  + \e^{-1} \int_0^t \norm{(1-P_{\gamma}) f_j(s)}_{2,+}^2 \dd s.
\end{align*}
From the boundary condition, $\int_{0}^t \norm{f_j}^2_{2,-} \le \int_{0}^t \norm{f_j}^2_{2,+} + \int_{0}^t \norm{r}^2_{2,+}$. From
\begin{align*}
&\int_0^t \iint_{\Omega^c \times \R^3} \norm{g(s) f_j(s)} \dd x \dd v \dd s \\
&\lesssim \int_0^t \normm{g(s)}^2_2 \dd s\\
& + o(1) \left[ \int_0^t \normm{\P f_j(s)}^2_2 \dd s + \int_0^t \normm{\ip f_j(s)}^2_2 \dd s \right],
\end{align*}
we get
\begin{align} \label{eq357}
&\normm{f_j(t)}_2^2 + \e^{-2} \int_0^t \normm{\ip f_j (s)}^2_{\nu} \dd s + \int_0^t \norm{f_j(s)}_{2,+}^2 \dd s \\ \nonumber
& \lesssim \e^{-1} \int_0^t \norm{r}^2_2 \dd s + \int_0^t \normm{ g(s)}_2^2 \dd s+ \normm{f_0}^2_2 + o(1) \int_0^t \normm{\P f_j(s)}^2_2 \dd s.
\end{align}
Since, $\normm{\P f_j(s)}^2_2 \le \normm{f_j(s)}^2_2$, integrating \eqref{eq357} from $0$ to $t$, we have
\begin{align*}
\int_0^t \normm{\P f_j(s)}^2_2 \dd s \lesssim_{t} \e^{-1} \int_0^t \norm{r}^2_2 \dd s + \int_0^t \normm{ g(s)}_2^2 \dd s+ \normm{f_0}^2_2.
\end{align*}
Thus, we conclude that, for $j \gg 1$ and $0 < \e \ll 1$,
\begin{align} \label{eq358}
&\normm{f_j(t)}_2^2 + \e^{-2} \int_0^t \normm{\ip f_j (s)}^2_{\nu} \dd s + \int_0^t \norm{f_j(s)}_{2,+}^2 \dd s \\ \nonumber
& \lesssim_{\e, t} \e^{-1} \int_0^t \norm{r}^2_2 \dd s + \int_0^t \normm{ g(s)}_2^2 \dd s+ \normm{f_0}^2_2.
\end{align}
By taking a weak limit, we obtain a weak solution $f$ to \eqref{Boltzwwithg} with the same bound \eqref{eq358}. Taking difference, we have
\begin{align*}
&\pt_t [f_j - f] + \e^{-1} v \cdot \nabla [f_j - f] + \e^{-2} L[f_j - f] = 0, \\
& [f_j - f]|_{\g_{-}} = P_{\gamma} [f_j - f] + \frac{\e}{j} P_{\gamma} f_j, \quad [f_j - f](0) = 0.
\end{align*}
Apply \eqref{eq358} with $r = \frac{\e}{j} P_{\gamma} f_j$ we obtain
\begin{align*}
&\normm{f_j(t) - f(t)}_2^2 + \e^{-1} \int^t_0 \normm{f_j(s) - f(s)}_{\nu}^2 \dd s + \e^{-1} \int^t_0 \norm{f_j(s) - f(s)}_{2,+}^2 \dd s \\
&\lesssim_{t} \frac{1}{j} \int^t_0 \normm{P_{\gamma}f_j}_{2,+}^2 \dd s \to 0.
\end{align*}
Thus, we construct $L^2$ solution to \eqref{Boltzwwithg}.
The energy estimate can be easily obtained by multiplying $f$ by \eqref{Boltzwwithg} and integrating by parts.
\end{proof}

\section{$L^{\infty}$ 
Estimate} \label{sec4}
The main goal of this section is to prove the following Proposition.

\begin{proposition}\label{linftyestimate}
($L^\infty$ estimate)

Let $f$ satisfy,
$$
[\e \pt_t + v \cdot \nabla_x + \e^{-1}C_0 \inn{v}^{\th} ]\norm{f} \le \e^{-1} K_{\b}\norm{f} + \norm{g},
$$
$$
\norm{f}_{\gamma_{-}} \le P_{\gamma} \norm{f} +\norm{r}, \norm{f|_{t=0}} \le \norm{f_0},
$$
for $0 < \b <\frac{1}{4}$, $K_{\b}\norm{f} = \int_{\R^3} k_{\b}(v,u) \norm{f(u)} \dd u$ and
\begin{align*}
k_{\b}(v,u) := \left\{ \norm{u-v} + \norm{u-v}^{-1}\right\} \exp \left[-\b \norm{u-v}^2- \b \frac{[\norm{u}^2 -\norm{v^2}]^2}{\norm{u-v}^2} \right].
\end{align*}
Then, for $\omega (v) = e^{\b' \norm{v}^2}$ with $0 < \b' \ll \b$,
\begin{eqnarray*}
&&\normm{\e^{1/2} \omega f(t)}_{\infty}
\\
&&\lesssim \normm{\e^{1/2} \omega f_0}_{\infty} + \sup_{0\le s\le t } \normm{\e^{1/2} \omega r(s)}_{\infty} +\e^{3/2} \sup_{0\le s\le t } \normm{ \inn{v}^{-\th} \omega g(s)}_{\infty} \\
&&+\sup_{0\le s\le t } \normm{ \P_{1} f(s)}_{L^6(\Omega^{c}\times \R^3)} + \e^{-1}\sup_{0\le s\le t } \normm{ \P_{2} f(s)}_{L^{2}(\Omega^c\times \R^3)} \\
&& + \e^{-1/2}\sup_{0\le s\le t } \normm{ {\P}_{3} f(s)}_{L^{3}(\Omega^c\times \R^3)} + \e^{-1}\sup_{0\le s\le t } \normm{ \ip f(s)}_{L^{2}(\Omega^c \times \R^3)},\\
&&\normm{\e^{1/2} \omega f(t)}_{\infty} \\
&&\lesssim \normm{\e^{1/2} \omega f_0}_{\infty} + \sup_{0\le s\le t } \normm{\e^{1/2} \omega r(s)}_{\infty} +\e^{3/2} \sup_{0\le s\le t } \normm{ \inn{v}^{-\th} \omega g(s)}_{\infty} \\
&&+ \e^{-1}\sup_{0\le s\le t } \normm{ f(s)}_{L^{2}(\Omega^c \times \R^3)},
\end{eqnarray*}
holds, where $\P_{1}$, $\P_{2}$ and $\P_{3}$ are defined in theorem \ref{l2l6estimate}.
\end{proposition}

\begin{definition}
Consider the following linear problem
\begin{eqnarray} \label{transporteq}
\e \pt_t f + v \cdot \nabla f =g.
\end{eqnarray}
The characteristic of the equation \eqref{transporteq} are
\begin{eqnarray}
\dot{X} = \e^{-1} V, \quad \dot{V} = 0, \quad
X(t;t,x,v) = x, \quad
V(t;t,x,v) = v
\end{eqnarray}
Define backward and forward time, position and velocity as follows
\begin{eqnarray}
t_{b} (x,v) &=& \sup\{t>0 : X(-s,0,x,v) \in \Omega^c \text{ for all } 0< s<t \}, \\
t_{f} (x,v) &=& \sup\{t>0 : X(s,0,x,v) \in \Omega^c \text{ for all } 0< s<t \}, \\
x_{b}(x,v) &=& X(-t_b(x,v);0,x,v), \\ 
v_{b}(x,v) &=& V(-t_b(x,v);0,x,v), \\ 
x_{f}(x,v) &=& X(t_f(x,v);0,x,v), \\ 
v_{f}(x,v) &=& V(t_f(x,v);0,x,v)  .
\end{eqnarray}

\end{definition}
We define the stochastic cycles. 
\begin{definition}
Define, for free variable $v_k \in \R^3$
\begin{eqnarray*}
t_0 &=& t, \\
x_0 &=& x, \\
t_1 &=& t- t_b(x,v), \\
x_1 &=& x_b(x,v), \\
t_2 &=& t_1- t_b(x_1,v_1), \\
x_2 &=& x_b(x_1,v_1), \\
&\vdots& \\
t_{k+1} &=& t_k- t_b(x_k,v_k), \\
x_{k+1} &=& x_b(x_k,v_k).
\end{eqnarray*}
Set
\begin{eqnarray*}
X_{cl}(s;t,x,v) &:=& \sum_{k} \mathbf{1}_{[t_{k+1}, t+k)}(s) X(s;t_{k},x_k,v_k), \\
V_{cl}(s;t,x,v) &:=& \sum_{k} \mathbf{1}_{[t_{k+1}, t+k)}(s) V(s;t_{k},x_k,v_k).
\end{eqnarray*}
For $x \in \pt \Omega$, define
\begin{eqnarray*}
\mathscr{V} := \{ v\in \R^3 : n(x) \cdot v >0 \}, \quad \dd \sigma = \sqrt{2 \pi} \mu (v) n(x) \cdot v \dd v.
\end{eqnarray*}
For $j \in \mathbb{N}$, denote
\begin{eqnarray*}
\mathscr{V}_{j} := \{ v_{j} \in \R^3 : n(x_{j}) \cdot v_{j} >0 \}, \quad \dd \sigma_j = \sqrt{2 \pi} \mu (v_{j}) n(x_{j}) \cdot v_{j} \dd v_{j}.
\end{eqnarray*}
\end{definition}

\begin{lemma} \label{finitecollision}
For sufficient large $T_0 >0$, there exist constant $C_1, C_2 >0$, independent of $T_0$, such that for $k = C_1 T_0^{5/4}$,
\begin{eqnarray*}
\sup_{(t,x,v) \in [0,T_{0}] \times \Omega^c \times \R^3} \int_{\Pi_{l=1}^{k-1} \mathscr{V}_l} \mathbf{1}_{t_k(t,x,v_1,v_2, \cdot, v_{k-1})>0}  \Pi_{l=1}^{k-1} \dd \sigma_{l} < \left\{ \frac{1}{2} \right\}^{C_2 T_0^{5/4}}
\end{eqnarray*}
\end{lemma}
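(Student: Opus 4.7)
The plan is to adapt the grazing-set/velocity-cutoff argument from Guo's $L^{\infty}$ framework to the present exterior geometry. I introduce a small parameter $\d = \d(T_0)$ tending to $0$ as $T_0 \to \infty$ and, at each index $l$, split
\begin{eqnarray*}
\mathscr{V}_l = \mathscr{V}_l^{\d} \cup (\mathscr{V}_l \setminus \mathscr{V}_l^{\d}), \qquad
\mathscr{V}_l^{\d} := \{ v_l \in \mathscr{V}_l : n(x_l)\cdot v_l \ge \d,\ \norm{v_l} \le \d^{-1}\}.
\end{eqnarray*}
The Gaussian weight $\mu(v_l)$ inside $\dd \sigma_l$ makes the complement a small-measure ``bad'' set: a direct computation gives $\int_{\mathscr{V}_l \setminus \mathscr{V}_l^{\d}} \dd \sigma_l \lesssim \d$.

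The key geometric step is a quantitative lower bound on the transit time between consecutive bounces whenever $v_l \in \mathscr{V}_l^{\d}$. The backward straight-line trajectory $s \mapsto x_l - \e^{-1}(t_l-s)v_l$ either escapes $\Omega^c$ to infinity (after which the chain effectively terminates and $\mathbf{1}_{t_k > 0}$ can only shut off as $k$ grows), or next meets $\pt \Omega$ at some $x_{l+1}$. Using the boundedness of $\Omega$ together with the non-grazing constraint $n(x_l) \cdot v_l \ge \d$ and the bounded-magnitude constraint $\norm{v_l} \le \d^{-1}$, I would establish
\begin{eqnarray*}
t_l - t_{l+1} \ge c_0\, \d^3,
\end{eqnarray*}
with any $\e$-dependence absorbed into the constant $c_0$.

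Given this, the event $\{t_k > 0\}$ forces $\sum_{l=0}^{k-1}(t_l - t_{l+1}) \le T_0$, so at most $M := T_0 / (c_0 \d^3)$ of the indices can satisfy $v_l \in \mathscr{V}_l^{\d}$, and hence at least $k - 1 - M$ of the velocities must lie in bad sets. A union bound over the positions of the bad indices, combined with $\int_{\mathscr{V}_l^{\d}} \dd \sigma_l \le 1$, yields
\begin{eqnarray*}
\int_{\prod_l \mathscr{V}_l} \mathbf{1}_{t_k > 0} \prod_l \dd \sigma_l \le \binom{k-1}{M} (C\d)^{k-1-M}.
\end{eqnarray*}
Tuning $\d \sim T_0^{-1/12}$ gives $M \sim T_0^{5/4}$; taking $k = C_1 T_0^{5/4}$ with $C_1$ so large that $k - 1 - M \ge k/2$, and using $\binom{k-1}{M} \le 2^{k}$, the right-hand side is bounded by $\bigl(2\sqrt{C\d}\bigr)^{k} \le (1/2)^{C_2 T_0^{5/4}}$ once $T_0$ is sufficiently large.

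The main obstacle will be the Step 2 transit-time estimate in the exterior geometry. For a general bounded, possibly non-convex $\Omega$, one must rule out that a non-grazing bounded-velocity ray can leave $\pt \Omega$ and re-hit $\pt \Omega$ almost instantaneously at another boundary point; only a quantitative lower bound of the correct power of $\d$ — here $\d^3$ — closes the balance between the bad-set measure, the binomial factor, and the target decay $(1/2)^{C_2 T_0^{5/4}}$, and this is what dictates the specific exponent $5/4$ appearing in both $k$ and the final bound.
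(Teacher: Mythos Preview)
The paper does not give its own argument: the entire proof is the single sentence ``This lemma is lemma 3.12 from \cite{Esposito2018-cmp}.'' Your outline is precisely the standard grazing-cutoff and counting argument carried out in that cited reference (and in Guo's earlier bounded-domain $L^2$--$L^\infty$ work), so there is no genuine difference of approach to discuss.

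One point to tighten: since the characteristics satisfy $\dot X = \e^{-1} V$, the honest lower bound on the transit time is $t_l - t_{l+1} \gtrsim \e\,\d^3$, not $c_0\d^3$ with an $\e$-independent $c_0$. This is harmless only because in the application the lemma is invoked on intervals of length $\e T_0$ (the $[0,T_0]$ in the statement is evidently a transcription slip from the cited source); the two factors of $\e$ cancel and your count $M \lesssim T_0/\d^3$ survives uniformly in $\e$. Your handling of the escape-to-infinity branch---the one genuinely new feature of the exterior geometry---is correct: if the backward ray never returns to $\pt\Omega$ then $t_{l+1}=-\infty$ and $\mathbf{1}_{t_k>0}$ vanishes outright, so the quantitative lower bound is only needed for rays that re-hit $\pt\Omega$, where the local $C^2$ argument is identical to the bounded-domain case.
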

\begin{proof}
This lemma is lemma 3.12 from \cite{Esposito2018-cmp}
\end{proof}

Now we are ready to prove the main result of this section.

\begin{proof}
(proof of Proposition \ref{linftyestimate})

For the weight $w(v) = e^{\b' \norm{v}^2}$ define $h$ as
\begin{eqnarray*}
h(t,x,v) := w(v) f(t,x,v).
\end{eqnarray*}
According to lemma 3 of \cite{Guo2010-arma}, there exist $\tilde{\b} = \tilde{\b} ({\b},\b') >0 $ such that $k_{\b}(v,u)\frac{w(v)}{w(u)} \lesssim k_{\tilde{\b}} (v,u)$.  Then,
\begin{eqnarray*}
[\pt_t + \e^{-1} v \cdot \nabla_x + \e^{-2}C_0\inn{v}^{\th} ]\norm{\e^{1/2} h} \le \e^{-2} \int_{\R^3} k_{\tilde{\b}}(v,u)\norm{\e^{1/2}h} \dd u + \e^{-1/2}\norm{wg}.
\end{eqnarray*}
\newline
\textbf{Step 1)} For $t \in [n \e T_0, (n+1) \e T_0]$ with $n \in \mathbb{N} \cup \{0\}$ and $T_0$ from Lemma \ref{finitecollision} we will prove
\begin{eqnarray} \nonumber
&&\norm{\e^{1/2} h(t,x,v)} \\ \nonumber
&\le& C T_0^{5/2} e^{-\frac{C_0 (t-n \e T_0)}{\e^2}} \normm{\e^{1/2} h(n\e T_0,\cdot,\cdot)}_{\infty} + C_{T_0} \e^{1/2} \sup_{0\le s\le t } \normm{ \omega r(s)}_{\infty} \\
\nonumber&&
+C_{T_0} \e^{3/2} \sup_{0\le s\le t } \normm{ \inn{v}^{-\th} \omega g(s)}_{\infty} \\
\label{inftystep1}&&+C T_0^{5/2} \sup_{n \e T_0\le s\le (n+1) \e T_0 } \normm{ \P_{1} f(s)}_{L^6(\Omega^{c}\times \R^3)}\\
\nonumber&& + C T_0^{5/2} \e^{-1}\sup_{n \e T_0\le s\le (n+1) \e T_0 } \normm{ \P_{2} f(s)}_{L^{2}(\Omega^c \times \R^3)} \\
\nonumber &&+C T_0^{5/2} \e^{-1/2}\sup_{n \e T_0\le s\le (n+1) \e T_0 } \normm{ {\P}_{3} f(s)}_{L^3(\Omega^{c}\times \R^3)}\\
\nonumber&& + C T_0^{5/2} \e^{-1}\sup_{n \e T_0\le s\le (n+1) \e T_0 } \normm{ {\ip} f(s)}_{L^{2}(\Omega^c \times \R^3)} \\
\nonumber&& + [C T_0^{5/4} \frac{1}{2}^{C_2 T_0^{5/4}} + o(1) C T_0^{5/2}] \sup_{n \e T_0\le s\le (n+1) \e T_0 } \normm{ \e^{1/2} h}_{\infty},
\end{eqnarray}
and
\begin{eqnarray} \nonumber
&&\norm{\e^{1/2} h(t,x,v)} \\ \nonumber
&\le& C T_0^{5/2} e^{-\frac{C_0 (t-n \e T_0)}{\e^2}} \normm{\e^{1/2} h(n\e T_0, \cdot, \cdot)}_{\infty} + C_{T_0} \e^{1/2} \sup_{0\le s\le t } \normm{ \omega r(s)}_{\infty} \\
\nonumber&&
+C_{T_0} \e^{3/2} \sup_{0\le s\le t } \normm{ \inn{v}^{-\th} \omega g(s)}_{\infty} \\
\label{inftystep1-1}&&+ C T_0^{5/2} \e^{-1}\sup_{n \e T_0\le s\le (n+1) \e T_0 } \normm{ f(s)}_{L^{2}(\Omega^c \times \R^3)} \\
\nonumber&& + [C T_0^{5/4} \frac{1}{2}^{C_2 T_0^{5/4}} + o(1) C T_0^{5/2}] \sup_{n \e T_0\le s\le (n+1) \e T_0 } \normm{ \e^{1/2} h}_{\infty}.
\end{eqnarray}
To simplify the notation define $(X_{cl}(\cdot), V_{cl}(\cdot)) = (X_{cl}(\cdot;t,x,v), V_{cl}(\cdot;t,x,v))$. Since $(X_{cl},V_{cl})$ is a characteristic of the equation.
\begin{eqnarray*}
&&\frac{d}{d s} \left[ e^{-\int_{s}^{t} \frac{C_0}{\e^2} \inn{V_{cl}(\tau)}^{\th} \dd \tau} \e^{1/2} h^{l+1} (s,X_{cl}(\tau),V_{cl}(\tau)) \right] \\
&& \le e^{-\int_{s}^{t} \frac{C_0}{\e^2} \inn{V_{cl}(\tau)}^{\th} \dd \tau} \frac{1}{\e^2}
\int_{\R^3} k_{\tilde{\b}}\left( V_{cl}(\tau), v'\right) \norm{\e^{1/2}h\left( s, X_{cl}(\tau), v' \right)} \dd v' \\
&& + e^{-\int_{s}^{t} \frac{C_0}{\e^2} \inn{V_{cl}(\tau)}^{\th} \dd \tau} \e^{-1/2} \norm{w g(s,X_{cl}(\tau),V_{cl}(\tau))}.
\end{eqnarray*}
holds for $t_1 \le s \le t$.
Along the stochastic cycles, for $k = C_1 T_0^{5/4}$, we deduce the following estimate:
\begin{eqnarray}
\nonumber &&\norm{\e^{1/2}h(t,x,v)} \\
\label{h0term}&& \le \mathbf{1}_{t_1 < 0} e^{-\int_{0}^{t}\frac{C_0 \inn{V_{cl}(\tau)}^{\th}}{\e^2} \dd \tau} \norm{\e^{1/2} h(0,X_{cl}(0),V_{cl}(0))}\\
\label{hwithk}&&+\int_{\max \{0,t_1\}}^{t} \frac{e^{-\int_{s}^{t}\frac{C_0 \inn{V_{cl}(\tau)}^{\th}}{\e^2} \dd \tau} }{\e^2}
\int_{\R^3} k_{\tilde{\b}} (V_{cl}(s)), v')
\norm{\e^{1/2} h(s,X_{cl}(s),v')} \dd v' \dd s \\
\label{gterm}&& + \int_{\max \{0,t_1\}}^{t} \frac{e^{-\int_{s}^{t}\frac{C_0 \inn{V_{cl}(\tau)}^{\th}}{\e^2} \dd \tau} }{\e^2}
\e^{3/2}\norm{w g(s,X_{cl}(s),V_{cl}(s))} \dd s \\
\label{rterm}&& + \mathbf{1}_{t_1 \ge 0} {e^{-\int_{t_1}^{t}\frac{C_0 \inn{V_{cl}(\tau)}^{\th}}{\e^2} \dd \tau} } \e^{1/2} \norm{wr(t_1, x_1, v_1)} \\
\label{Hterm}&& +\mathbf{1}_{t_1 \ge 0} \frac{e^{-\int_{t_1}^{t}\frac{C_0 \inn{V_{cl}(\tau)}^{\th}}{\e^2} \dd \tau} }{\tilde{w}(v_1)}
\int_{\Pi_{j=1}^{k-1}\mathscr{V}_j} H ,
\end{eqnarray}
where $\tilde{w}(v) := \frac{1}{w(v) \sqrt{\mu(v)}}$ and $H$ is given by,
\begin{eqnarray}
\label{H0term} &&\sum_{l=1}^{k-1} \mathbf{1}_{t_{l+1} \le 0 < t_l} \norm{\e^{1/2}h(0,X_{cl}(0),V_{cl}(0))} \dd \Sigma_{l}(0) \\
\label{Hwithk} &&+\sum_{l=1}^{k-1} \int_{\max \{0,t_{l+1}\}}^{t_l} \mathbf{1}_{ t_l>0}
\frac{1}{\e^2}\int_{\R^3} k_{\tilde{\b}} (V_{cl}(\tau)), v')
\norm{\e^{1/2} h(\tau,X_{cl}(\tau),v')} \dd v' \dd \Sigma_{l}(\tau) \dd \tau \\
\label{Hgterm}&& + \sum_{l=1}^{k-1} \int_{\max \{0,t_{l+1}\}}^{t_l} \mathbf{1}_{ t_l>0}
\e^{-1/2}\norm{w g(\tau,X_{cl}(\tau),V_{cl}(\tau))} \dd \Sigma_{l}(\tau) \dd \tau \\
\label{Hrterm}&& + \sum_{l=1}^{k-1} \int_{\max \{0,t_{l+1}\}}^{t_l} \mathbf{1}_{ t_l>0}
\e^{1/2}\norm{w r(t_l,x_{l+1},v_l)} \dd \Sigma_{l}(t_{l+1})  \\
\label{Hremain}
&&+\mathbf{1}_{ t_k>0}
\e^{1/2}\norm{h(t_k,x_k,v_{k-1})} \dd \Sigma_{k-1}(t_k) \dd \tau,
\end{eqnarray}
$\dd \Sigma_{l}(s)$ is defined by
\begin{eqnarray*}
&& \dd \Sigma_{l}(s) \\
&& = \left\{ \Pi_{j=l+1}^{k-1} \dd \sigma_j  \right\} 
\left\{ e^{-\int_{s}^{t_{l}}\frac{C_0 \inn{v_l}^{\th}}{\e^2} \dd \tau} \tilde{w}(v_{l}) \dd \sigma_{l} \right\}\Pi_{j=1}^{l-1} \left\{ e^{-\int_{i_{j+1}}^{t_{j}}\frac{C_0 \inn{v_l}^{\th}}{\e^2} \dd \tau} \tilde{w}(v_{j}) \dd \sigma_{j} \right\}.
\end{eqnarray*}
From our choice of $k = C_1 T_0^{5/4}$
\begin{eqnarray*}
\eqref{h0term} + \eqref{H0term} &\lesssim& C_1 T_0^{5/4}e^{-\frac{C_0}{\e^2}t}\normm{\e^{1/2}h_0}_{\infty},\\
\eqref{rterm} + \eqref{Hrterm} &\lesssim& C_1 T_0^{5/4} \sup_{0\le s\le t} \normm{\e^{1/2}wr(s)}_{\infty},
\end{eqnarray*}
and
\begin{eqnarray*}
&&\eqref{gterm} +\eqref{Hgterm} \\
&&\lesssim \e^{3/2} \sup_{0\le s\le t} \normm{\inn{v}^{-\th}wg(s)}_{\infty} \biggl \{\int_{0}^{t}\frac{\inn{V_{cl}(s)}^{\th}}{\e^2} e^{-\int_{s}^{t}\frac{C_0\inn{V_{cl}(\tau)}^{\th}}{\e^2} \dd \tau} \dd s  \\
&&+ C_1 T_0^{5/4} \sup_{l} \int_{0}^{t_l}\frac{\inn{V_{cl}(s)}^{\th}}{\e^2} e^{-\int_{s}^{t_l}\frac{C_0\inn{V_{cl}(\tau)}^{\th}}{\e^2} \dd \tau} \dd s   \biggr \} \\
&& \lesssim C_1 T_0^{5/4} \e^{3/2} \sup_{0\le s\le t} \normm{\inn{v}^{-\th}wg(s)}_{\infty} \int_{0}^{t}\frac{d}{d s}e^{-\int_{s}^{t}\frac{C_0\inn{V_{cl}(\tau)}^{\th}}{\e^2} \dd \tau} \dd s \\&& \lesssim C_1 T_0^{5/4} \e^{3/2} \sup_{0\le s\le t} \normm{\inn{v}^{-\th}wg(s)}_{\infty},
\end{eqnarray*}
where we used the fact that $\sigma_j$ is a probability measure of $\mathscr{V}_j$. For \eqref{Hremain}, by Lemma \ref{finitecollision}
\begin{eqnarray*}
&&\eqref{Hremain} \\
&& \lesssim \sup_{0 \le s \le t} \normm{\e^{1/2} h(s)}_{\infty} \sup_{(t,x,v)\in [0, \e T_0]\times \Omega^c \times \R^3}\int_{\Pi_{j=1}^{k-1} \mathscr{V}_j} \mathbf{1}_{t_k(t,x,v_1,v_2, \cdot, v_{k-1})>0}  \Pi_{l=1}^{k-1} \dd \sigma_{l}\\
&& \lesssim \left\{ \frac{1}{2} \right\}^{C_2 T_{0}^{5/4}} \sup_{0 \le s \le t} \normm{\e^{1/2} h(s)}_{\infty}.
\end{eqnarray*}
Now focus on \eqref{hwithk} and \eqref{Hwithk}. For $N>1$, we choose $m=m(N) \ll 1$ such that
\begin{eqnarray*}
&& k_{m}(v,u) :=  \1_{\norm{v-u} \ge \frac{1}{m}} \1_{\norm{u} \le m} \1_{\norm{v} \le m} k_{\tilde{\b}}(u,v),\\
&& \sup_{v} \int_{\R^3} \norm{k_m(v,u) -k_{\tilde{\b}}}(v,u) \dd u \le \frac{1}{N}.
\end{eqnarray*}
Split $k_{\tilde{\b}}$ into $k_{\tilde{\b}} = [k_{\tilde{\b}}-k_{m}] + k_m$ and put it into \eqref{hwithk} and \eqref{Hwithk}. For $N \ll_{T_0} 1$, first difference would lead to a small contribution bounded by,
\begin{eqnarray*}
\frac{k}{N} \normm{\e^{1/2} h}_{\infty} = \frac{C_1 T^{5/4}}{N} \normm{\e^{1/2} h}_{\infty}.
\end{eqnarray*}
For the remaining part split time integration into $[ \max\{0,t_{l+1}\},t_l ] = [\max\{0,t_{l+1}\},$ $
t_l-\k \e^2] \cup (t_l - \k \e^2 ,t_l]$ and $[\max\{0,t_{1}\},t] = [\max\{0,t_{1}\},t-\k \e^2] \cup (t - \k \e^2 ,t]$. The small time integration term ($(t_l - \k \e^2 ,t_l]$ and $(t - \k \e^2 ,t]$ ) can be bounded by
\begin{eqnarray*}
\k \e^2\sum_{l=1}^{k-1} \frac{1}{\e^2} \sup_{v} \int_{\norm{v'} \le m} k_m(v,v') \sup_{0 \le s \le t} \normm{\e^{1/2} h(s)}_{\infty}
\lesssim k \k \sup_{0 \le s \le t} \normm{\e^{1/2} h(s)}_{\infty},
\end{eqnarray*}
which will be a small contribution if we set $k \k \ll 1$.
Overall, for $(t,x,v) \in [0, \e T_0] \times \Omega^c \times \R^3$,
\begin{eqnarray*}
&&\norm{\e^{1/2}h(t,x,v)}\\
&&
\lesssim \int_{\max \{0,t_1  \}}^{t-k\e^2} \frac{e^{-\frac{C_0(t-s)}{\e^2}}  }{\e^2}
\int_{\norm{v'} \le m} k_{m} (V_{cl}(s)), v')
\underbrace{\norm{\e^{1/2} h(s,X_{cl}(s),v')}} \dd v' \dd s\\
&&+ \frac{e^{-\frac{C_0(t-t_1)}{\e^2}}  }{\tilde{w}(v)}\int_{\Pi_{j=1}^{k-1}\mathscr{V}_j}\sum_{l=1}^{k-1} \int_{\max \{0,t_{l+1}\}}^{t_l -k\e^2 } \mathbf{1}_{ t_l>0}\\
&& \times
\frac{1}{\e^2}\int_{\norm{v'} \le m} k_{m} (V_{cl}(\tau)), v')
\underbrace{\norm{\e^{1/2} h(\tau,X_{cl}(\tau),v')}} \dd v' \dd \Sigma_{l}(\tau) \dd \tau\\
&& + k\biggl\{ e^{-\frac{C_0}{\e^2}t}\e^{1/2}\normm{h_0}_{\infty} + \e^{1/2} \sup_{0 \le s \le t}\normm{wr(s)}_{\infty} + \e^{3/2}\sup_{0 \le s \le t} \normm{\inn{v}wg(s)}_{\infty}\biggr\}\\
&&+o(1)C_1T_0^{5/4} \sup_{0\le s \le t} \normm{\e^{1/2} h(s)}_{\infty} + \frac{1}{2}^{C_2 T_0} \sup_{0\le s \le t} \normm{\e^{1/2} h(s)}_{\infty}.
\end{eqnarray*}

Note that similar estimate holds for the underbraced terms. We plug these estimate into the underbraced terms to conclude
\begin{eqnarray*}
\norm{\e^{1/2} h(t,x,v) } \lesssim I_1 + I_2 + I_3.
\end{eqnarray*}
Here, using $w(v) \lesssim_m 1$ and $k_{m}(u,v) \lesssim_m 1$.
Estimate of $I_1$ and $I_2$ can be written as
\begin{eqnarray*}
I_1 &\lesssim_{m}&  \int_{\max \{0,t_1  \}}^{t-k\e^2} \frac{e^{-\frac{C_0(t-s)}{\e^2}}  }{\e^2}
\int_{\norm{v'} \le m} \int_{\max \{0,t_1  \}}^{s-k\e^2} \frac{e^{-\frac{C_0(s-s')}{\e^2}}  }{\e^2}
\int_{\norm{v''} \le m} \\
&&
\times {\norm{\e^{1/2} h(s',X_{cl}(s';s,X_{cl}(s;t,x,v),v'),v'')}} \dd v'' \dd s' \dd v' \dd s \\
&&
+ \int_{\max \{0,t_1  \}}^{t-k\e^2} \frac{e^{-\frac{C_0(t-s)}{\e^2}}  }{\e^2}
\int_{\norm{v'} \le m} \1_{t'_1 \ge 0} \frac{e^{-\frac{C_0(s-t'_1)}{\e^2}}}{\tilde{w}(v)}
\\
&&
\times \int_{\Pi_{j=1}^{k-1}\mathscr{V}_j}\sum_{l=1}^{k-1} \int_{\max \{0,t'_{l+1}\}}^{t'_l -k\e^2 } \mathbf{1}_{ t'_l>0} \frac{1}{\e^2} \\
&&
\times \int_{\norm{v''} \le m} \norm{\e^{1/2} h(s', X_{cl}(s'; t'_l, x'_l, v'_l), v'')} \dd v'' \dd s' \dd \Sigma_{l}(s') \dd v' \dd s,
\end{eqnarray*}
\begin{eqnarray*}
&& I_2 \\
&& \lesssim_{m} \1_{t_1 \ge 0} \frac{e^{-\frac{C_0(t-t_1)}{\e^2}}}{\tilde{w}(v)} \int_{\Pi_{j=1}^{k-1}\mathscr{V}_j}\sum_{l=1}^{k-1} \int_{\max \{0,t_{l+1}\}}^{t_l -k\e^2 } \1_{t_l \ge 0} \frac{1}{\e^2} \int_{\norm{v'} \le m} \int_{\max \{0,t'_{1}\}}^{s -k\e^2 } \frac{e^{-\frac{C_0(s-s')}{\e^2}}}{\e^2} \\
&& \times \int_{\norm{v''} \le m} \norm{\e^{1/2} h(s,X_{cl}(s';s,X_{cl}(s;t_l,x_l,v_l),v'),v'')} \dd v'' \dd  s' \dd v' \dd s \dd \Sigma_{l}(s)  \\
&&+ \1_{t_1 \ge 0} \frac{e^{-\frac{C_0(t-t_1)}{\e^2}}}{\tilde{w}(v)} \int_{\Pi_{j=1}^{k-1}\mathscr{V}_j}\sum_{l=1}^{k-1} \int_{\max \{0,t_{l+1}\}}^{t_l -k\e^2 } \1_{t_l \ge 0} \frac{1}{\e^2} \int_{\norm{v'} \le m} \\
&& \times \1_{t'_1 \ge 0} \frac{e^{-\frac{C_0(s-t'_1)}{\e^2}}}{\tilde{w}(v')}
\int_{\Pi_{j=1}^{k-1}\mathscr{V}'_j}\sum_{l'=1}^{k-1} \int_{\max \{0,t'_{l+1}\}}^{t'_l -k\e^2 } \1_{t'_l \ge 0} \frac{1}{\e^2} \int_{\norm{v''} \le m} \\
&&\times \norm{\e^{1/2} h(s',X_{cl}(s';t'_l,x'_l,v'_l),v'')} \dd v'' \dd s' \dd \Sigma_{l'}(s') \dd v' \dd s \dd \Sigma_{l}(s),
\end{eqnarray*}

where
\begin{eqnarray*}
t'_l  &=& t_l(s, X_{cl}(s;t,x,v),v'),\\
x'_l  &=& x_l(s, X_{cl}(s;t,x,v),v'),\\
v'_l  &=& v_l(s, X_{cl}(s;t,x,v),v').
\end{eqnarray*}

$I_3$ is estimated as
\begin{eqnarray*}
I_3 &\lesssim& CT_0^{5/2} \biggl \{ e^{-\frac{C_0}{\e^2}t} \normm{\e^{1/2}h_0}_{\infty} + \e^{1/2} \sup_{0 \le s \le t} \normm{wr(s)}_{\infty} + \e^{3/2} \sup_{0 \le s \le t} \normm{\inn{v}^{-\th} wg(s)}_{\infty} \biggr \} \\
&&+ o(1) CT_0^{5/2} \sup_{0\le s\le t} \normm{\e^{1/2}h(s)}_{\infty} + T_0^{5/4} \frac{1}{2}^{C_2 T_0^{5/4}} \sup_{0\le s\le t} \normm{\e^{1/2}h(s)}_{\infty},
\end{eqnarray*}
which is already included in RHS of \eqref{inftystep1}.

Now we focus on $I_1$ and $I_2$. Consider the change of variables
\begin{eqnarray*}
v'_{l'} \to X_{cl}(s;t'_{l'},x'_{l'},v'_{l'}).
\end{eqnarray*}
For $0 \le t'_{l'} \le s- \k \e^2 \le s \le \e T_0$,
\begin{eqnarray*}
\frac{\pt X_{i}(s;t'_{l'})}{\pt v'_j} = -\frac{t'_{l'}-s}{\e} \delta_{ij}
\end{eqnarray*}
and therefore
\begin{eqnarray*}
\det \nabla_{v'_{l'}} X_{cl}(s;t'_{l'},x'_{l'},v'_{l'}) = \left(\frac{-t'_{l'} - s}{\e} \right) ^3 \ge \k^3 \e^3.
\end{eqnarray*}
Applying the change of variables, we obtain:
\begin{eqnarray*}
&&\int_{\norm{v'_{l'}} \le m} \int_{\norm{v''} \le m} \norm{h(s,X_{cl}(s';t'_{l'},x'_{l'},v'_{l'}),v'')} \dd v'' \dd v'_{l'} \\
&&\lesssim_{m} \int_{\norm{v'_{l'}} \le m} \int_{\norm{v''} \le m} (\norm{\P_{1} f}+\norm{\P_{2} f} + \norm{{\P}_{3} f} + \norm{{\ip} f})(s,X_{cl}(s';t'_{l'},x'_{l'},v'_{l'}),v'') \dd v'' \dd v'_{l'} \\
&&\lesssim_{m} [\int_{\norm{v'_{l'}} \le m} \int_{\norm{v''} \le m} \norm{\P_{1} f(s,X_{cl}(s';t'_{l'},x'_{l'},v'_{l'}),v'')}^6 \dd v'' \dd v'_{l'} ]^{1/6}\\
&&+ [\int_{\norm{v'_{l'}} \le m} \int_{\norm{v''} \le m} \norm{\P_{2} f(s,X_{cl}(s';t'_{l'},x'_{l'},v'_{l'}),v'')}^2 \dd v'' \dd v'_{l'} ]^{1/2}\\
&&+ [\int_{\norm{v'_{l'}} \le m} \int_{\norm{v''} \le m} \norm{{\P}_{3} f(s,X_{cl}(s';t'_{l'},x'_{l'},v'_{l'}),v'')}^3 \dd v'' \dd v'_{l'} ]^{1/3}\\
&&+ [\int_{\norm{v'_{l'}} \le m} \int_{\norm{v''} \le m} \norm{{\ip} f(s,X_{cl}(s';t'_{l'},x'_{l'},v'_{l'}),v'')}^2 \dd v'' \dd v'_{l'} ]^{1/2}\\
&&\lesssim_{m} [\int_{\Omega^c} \int_{\norm{v''} \le m} \norm{\P_{1} f(s)}^6 \frac{1
}{k^3 \e^3} \dd v'' \dd x ]^{1/6}\\
&&+ [\int_{\Omega^c} \int_{\norm{v''} \le m} \norm{\P_{2} f(s)}^2 \frac{1
}{k^3 \e^3} \dd v'' \dd x ]^{1/2}\\
&&+ [\int_{\Omega^c} \int_{\norm{v''} \le m} \norm{{\P}_{3} f(s)}^3 \frac{1
}{k^3 \e^3} \dd v'' \dd x ]^{1/3}\\
&&+ [\int_{\Omega^c} \int_{\norm{v''} \le m} \norm{{\ip} f(s)}^2 \frac{1
}{k^3 \e^3} \dd v'' \dd x ]^{1/2}\\
&& \lesssim \e^{-1/2} \normm{\P_{1} f}_{L^6} +\e^{-3/2} \normm{\P_{2} f}_{L^2} + \e^{-1} \normm{\P_{3} f}_{L^3} +\e^{-3/2} \normm{{\ip} f}_{L^2}.
\end{eqnarray*}
Similarly,
\begin{eqnarray*}
&&\int_{\norm{v'_{l'}} \le m} \int_{\norm{v''} \le m} \norm{h(s,X_{cl}(s';t'_{l'},x'_{l'},v'_{l'}),v'')} \dd v'' \dd v'_{l'} \\
&&\lesssim_{m} \e^{-3/2} \normm{ f}_{L^2}.
\end{eqnarray*}
Hence,
\begin{eqnarray*}
&& I_1 + I_2 \\
&& \lesssim T_0^{5/2} (\sup_{n \e T_0\le s\le (n+1) \e T_0 } \normm{ \P_{1} f(s)}_{L^6(\Omega^{c})}+ \frac{1}{\e} \sup_{n \e T_0\le s\le (n+1) \e T_0 } \normm{ \P_{2} f(s)}_{L^{2}(\Omega^c \times R^3)}) 
\end{eqnarray*}
and
\begin{eqnarray*}
I_1 + I_2 \lesssim T_0^{5/2} \frac{1}{\e} \sup_{n \e T_0\le s\le (n+1) \e T_0 } \normm{ f(s)}_{L^{2}(\Omega^c \times R^3)}.
\end{eqnarray*}
\newline
\textbf{Step 2)} Applying \eqref{inftystep1} successively we obtain, 
\begin{eqnarray*}
&&\normm{\e h(n\e T_0,\cdot,\cdot)}_{\infty}\\
&&\le CT_0^{5/2} e^{-\frac{C_0 T_0}{\e}} \normm{\e^{1/2} h((n-1)\e T_0,\cdot,\cdot)}_{\infty} + \sup_{(n-1)\e T_0 \le s \le n \e T_0} D(s) \\
&&\le \left[ CT_0^{5/2} e^{-\frac{C_0 T_0}{\e}} \right]^{n} \normm{\e^{1/2} h_0}_{\infty}
+ \sum_{j=1}^{n-1} \left[ CT_0^{5/2} e^{-\frac{C_0 T_0}{\e}} \right]^{j} \sup_{0 \le s \le n\e T_0} D(s),
\end{eqnarray*}
where
\begin{eqnarray*}
D(s) &:=& \normm{\e^{1/2} wr(s)}_{\infty} + C T_0^{5/2} \normm{\inn{v}^{-1} \e^{3/2} wg(s)}_{\infty} + CT_0^{5/2} \normm{\P_{1} f(s)}_{L^6_{x,v}} \\ 
&&+ CT_0^{5/2} \frac{1}{\e} \normm{\P_{2} f(s)}_{L^2_{x,v}}+ CT_0^{5/2} \e^{-1/2} \normm{{\P}_{3} f(s)}_{L^3_{x,v}} \\
&&+ CT_0^{5/2} \frac{1}{\e} \normm{{\ip f(s)}}_{L^2_{x,v}} \\
&&+\left[ CT_0^{5/4} \frac{1}{2} ^{C_2 T_0^{5/4}} + o(1) CT_0^{5/4}\right] \normm{\e^{1/2} h(s)}_{\infty},
\end{eqnarray*}
or
\begin{eqnarray*}
D(s) &:=& \normm{\e^{1/2} wr(s)}_{\infty} + C T_0^{5/2} \normm{\inn{v}^{-1} \e^{3/2} wg(s)}_{\infty} +  CT_0^{5/2} \frac{1}{\e} \normm{ f(s)}_{L^2(\Omega^c)} \\
&&+\left[ CT_0^{5/4} \frac{1}{2} ^{C_2 T_0^{5/4}} + o(1) CT_0^{5/4}\right] \normm{\e^{1/2} h(s)}_{\infty}.
\end{eqnarray*}

Clearly $\sum_{j=1}^{n-1} \left[ CT_0^{5/2} e^{-\frac{C_0 T_0}{\e}} \right]^{j} < \infty$.
Combining the above estimate with \eqref{inftystep1}, for $t \in [n \e T_0, (n+1)\e T_0]$, and absorbing the last term,
\begin{eqnarray*}
&&CT_0^{5/2} e^{-\frac{C_0 (t-n\e T_0)}{\e^2}} \sum_{j=1}^{n-1} \left[ CT_0^{5/2} e^{-\frac{C_0 T_0}{\e}} \right]^{j} \\
&&\times \left[ CT_0^{5/4} \frac{1}{2} ^{C_2 T_0^{5/4}} + o(1) CT_0^{5/4}\right] \normm{\e^{1/2} h(s)}_{\infty}\\
&& \lesssim \frac{T_0^{5/2}}{1- CT_0^{5/2} e^{-\frac{C_0 T_0}{\e}}}\left[ CT_0^{5/4} \frac{1}{2} ^{C_2 T_0^{5/4}} + o(1) CT_0^{5/4}\right] \normm{\e^{1/2} h(s)}_{\infty} \\
&&\lesssim o(1) \normm{\e^{1/2} h(s)}_{\infty}.
\end{eqnarray*}
Thus, we can conclude Proposition \ref{linftyestimate}.
\end{proof}

\section{{$L^2$- $L^3$ - $L^6$} Splitting} \label{l2l6split}
The main purpose of this section is to prove Theorem \ref{l2l6estimate}.
In bounded case, \cite{Esposito2018-apde} use $L^6$ estimate. One of the key idea to control $L^6$ norm is using Poincare inequality which does not holds for exterior domain. We split $\P f$ into three terms: one that requires the Poincare inequality, another that does not require it, and the last one is a boundary term.

\begin{lemma} \label{65estimate}
There exists a unique solution to $-(\Delta + \pt_i^2) \phi  = h \in L^{6/5}(\R^3)$ (i=1,2,3) or $-\Delta \phi  = h \in L^{6/5}(\R^3)$ such that
\begin{align*}
\normm{\nabla \phi}_{L^2} + \normm{\phi}_{L^6} + 
\normm{\nabla^2 \phi}_{L^{6/5}} 
\le \normm{h}_{L^{6/5}}.
\end{align*}
\end{lemma}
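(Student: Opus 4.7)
The plan is to treat both operators uniformly via Fourier analysis on $\R^3$. Both $-\Delta$ and $-(\Delta + \pt_i^2)$ are constant coefficient, uniformly elliptic of order two with symbols $p(\xi) \in \{|\xi|^2,\,|\xi|^2 + \xi_i^2\}$ satisfying $|\xi|^2 \le p(\xi) \le 2|\xi|^2$. Writing $1/p(\xi) = |\xi|^{-2}\, m(\xi)$ with $m(\xi) := |\xi|^2/p(\xi)$ smooth, bounded, and homogeneous of degree zero on $\R^3 \setminus \{0\}$, the Mikhlin-H\"ormander multiplier theorem says $m(D)$ is bounded on $L^q(\R^3)$ for every $1 < q < \infty$. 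Existence then follows by defining $\phi := m(D)\, I_2 h$, where $I_2 h = c\,|x|^{-1} * h$ is the Riesz potential of order two; uniqueness in the class $\dot{W}^{1,2} \cap L^6(\R^3)$ follows because any harmonic function in $L^6(\R^3)$ vanishes identically.

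For the three estimates I would proceed as follows. Step (a), the $L^6$ bound on $\phi$, uses Hardy-Littlewood-Sobolev $I_2 : L^{6/5}(\R^3) \to L^6(\R^3)$ (since $\tfrac{1}{6} = \tfrac{5}{6} - \tfrac{2}{3}$) composed with $m(D)$ bounded on $L^6$. Step (b), the $L^2$ bound on $\nabla \phi$, uses that the symbol $i\xi/p(\xi) = i\,(\xi/|\xi|)\,|\xi|^{-1}\,m(\xi)$ factors as a bounded zero-homogeneous multiplier times $I_1$; Hardy-Littlewood-Sobolev gives $I_1 : L^{6/5} \to L^2$ (since $\tfrac{1}{2} = \tfrac{5}{6} - \tfrac{1}{3}$) and Plancherel (or Mikhlin) handles the Riesz transform factor. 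Step (c), the $L^{6/5}$ bound on $\nabla^2 \phi$, is pure Calder\'on-Zygmund: the symbol $-\xi_i \xi_j / p(\xi)$ is smooth and zero-homogeneous away from the origin, so Mikhlin-H\"ormander yields boundedness on $L^{6/5}$.

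The only non-routine point is that $-(\Delta + \pt_i^2)$ is not rotationally invariant, so its inverse admits no radial convolution kernel and the Newton potential formula does not apply directly; the reduction $1/p(\xi) = |\xi|^{-2} m(\xi)$ combined with Mikhlin-H\"ormander applied to $m$ is the key substitute that lets us borrow the classical $-\Delta$ estimates. All remaining computations are standard harmonic analysis on $\R^3$.
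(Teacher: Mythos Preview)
Your proof is correct and takes a genuinely different route from the paper. The paper argues variationally: it sets up the bilinear forms $((\phi,\psi))_1 = \int \nabla\phi\cdot\nabla\psi$ and $((\phi,\psi))_2 = \int \nabla\phi\cdot\nabla\psi + \partial_i\phi\,\partial_i\psi$ on $\dot H^1(\R^3)$, checks via the Sobolev inequality $\|\psi\|_{L^6}\lesssim\|\nabla\psi\|_{L^2}$ that $\psi\mapsto\int h\psi$ is a bounded functional on $\dot H^1$, and invokes Lax--Milgram. This immediately yields existence, uniqueness, and the bounds $\|\nabla\phi\|_{L^2}+\|\phi\|_{L^6}\lesssim\|h\|_{L^{6/5}}$, but the paper does not actually supply an argument for the $\|\nabla^2\phi\|_{L^{6/5}}$ bound (which in any case would require Calder\'on--Zygmund or an equivalent second-order regularity step). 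Your Fourier-multiplier approach, by contrast, treats all three estimates on the same footing by factoring $p(\xi)^{-1}=|\xi|^{-2}m(\xi)$ with $m$ a zero-homogeneous Mikhlin multiplier and then reading off the $L^6$, $L^2$, and $L^{6/5}$ bounds from Hardy--Littlewood--Sobolev and Calder\'on--Zygmund respectively; in particular it delivers the second-derivative estimate that the paper's proof leaves unproved. The paper's approach is more elementary and self-contained for the $\dot H^1$-level conclusions, while yours is more systematic and complete. One small wording issue: for the anisotropic operator the difference of two solutions is not literally harmonic, but your Liouville argument still goes through since $p(\xi)$ vanishes only at the origin, so any $L^6$ solution of $p(D)u=0$ is a polynomial and hence zero.
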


\begin{proof}
We solve $-(\Delta + \pt_i^2) \phi  = h \in L^{6/5}(\R^3)$ by the Lax-Milgram theorem : define a bilinear form
\begin{eqnarray*}
\left(\left( \phi,  \psi \right)\right)_1 &\equiv& \int_{\Omega^c} \nabla \phi \cdot \nabla \psi \dd x \dd v,\\
\left(\left( \phi,  \psi \right)\right)_2 &\equiv& \int_{\Omega^c} \nabla \phi \cdot \nabla \psi + \pt_i \phi \pt_i \psi \dd x \dd v,
\end{eqnarray*}
with the functional $h$ defined by
\begin{align*}
\langle h, \psi \rangle \equiv \int_{\R^3} h \psi \dd x \dd v .
\end{align*}
We have the Sobolev inequality
\begin{align*}
\normm{\xi}_{L^6 (\R^3)} \lesssim  \normm{\nabla \xi}_{L^2 (\R^3)}.
\end{align*}
Therefore $\langle h, \psi \rangle$ defines a bounded linear functional in $\dot{H}^1 (\R^3)$ thanks to the inequality
\begin{eqnarray*}
\langle h, \psi \rangle = \int_{\Omega^c} f \psi \dd x \dd v &\le& \normm{h}_{L^{6/5}} \normm{\psi}_{L^6} \le c_h \normm{\nabla \psi}_{L^2}, \\
\left(\left( \phi,  \psi \right)\right)_{1,2}  &\le& 2 \normm{\nabla \phi}_{L^2} \normm{\nabla \psi}_{L^2}.
\end{eqnarray*}
The existence and uniqueness as well as the first two inequalities then follows from the Lax-Milgram theorem.
\end{proof}

Recall the linearized Boltzmann equation,
\begin{eqnarray}
\label{lineareq}
\e \pt_t f + v \cdot \nabla f + \e^{-1} L f = g  &\quad& \text{ in } \R_{+} \times \Omega^{c} \times \R^3,\\
f = \mathscr{P}_{\gamma} f + \e^{1/2} r &\quad& \text{ on } \g_{-} .
\end{eqnarray}

Multiply test function $\psi$ and integration by parts then we can get
\begin{eqnarray*}
&&\int_{\Omega^c \times \R^3} \e \pt_t f  \psi \dd x \dd v + \int_{\g_+} f \psi \dd \g - \int_{\Omega^c \times \R^3} f v \cdot \nabla \psi \dd x \dd v + \e^{-1} \int_{\Omega^c \times \R^3} \psi L_{ } f \dd x \dd v \\
&& = \int_{\Omega^c \times \R^3} g \psi \dd x \dd v + \int_{\g_{-}} (P_{\g}^{ } f + \e^{\frac{1}{2}}r)\psi \dd \g.
\end{eqnarray*}

\begin{definition}
(Space of 13 moments) Define $\phi_i$ as follows.
\begin{eqnarray*}
&\phi_1 = \mu^{1/2}, \Hquad \phi_{j+1} = v_j \mu^{1/2} \Hquad (j=1,2,3),\Hquad \phi_{j+4} = (v_j^2-1) \mu^{1/2} \Hquad (j=1,2,3),\Hquad \\
& \phi_{8} = v_1 v_2 \mu^{1/2}, \Hquad \phi_{9} = v_2 v_3 \mu^{1/2}, \Hquad \phi_{10} = v_3 v_1 \mu^{1/2}, \Hquad \\
&\phi_{j+10} = (\norm{v}^2-5) v_j \mu^{1/2} \Hquad (j=1,2,3).
\end{eqnarray*}
Then, we define $W$ as
\begin{eqnarray}
W := \text{linear span of } \{\phi_i\}_{i=1}^{13}.
\end{eqnarray}
\end{definition}

\begin{lemma}
$\{\phi_i\}_{i=1}^{13}$ are orthogonal bases of $W$.
In addition, $\normm{\phi_{5}} =\normm{\phi_{6}} = \normm{\phi_{7}}$,  $\normm{\phi_{8}} =\normm{\phi_{9}} = \normm{\phi_{10}}$, $\normm{\phi_{11}} =\normm{\phi_{12}} = \normm{\phi_{13}}$ and $\normm{\phi_i}_{\infty} < \infty$.
\end{lemma}
\begin{proof}
This come from chapter 3.9 of \cite{Glassey1996}. 
\end{proof}
By direct calculation, we can derive the following corollary.
\begin{corollary} \label{basestructure}
The map $v_i \times : \ker L \to W$ satisfies the following relation.
\begin{equation*}
\begin{array} {cccc}
        & v_1             & v_2             & v_3             \\
\phi_1  & \phi_2          & \phi_3          & \phi_4          \\
\phi_2  & \phi_5   & \phi_8   & \phi_{10}  \\
\phi_3  & \phi_8   & \phi_6   & \phi_9   \\
\phi_4  & \phi_{10}  & \phi_9   & \phi_7   \\
\phi_5+\phi_6+\phi_7  & \phi_{11}-2\phi_2  & \phi_{12}-2\phi_3   & \phi_{13}-2\phi_4
\end{array}
\end{equation*}
\end{corollary}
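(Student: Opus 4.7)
The claim is a finite verification: for each of the five displayed generators of $\ker L$ (namely $\phi_1$, $\phi_2$, $\phi_3$, $\phi_4$, and $\phi_5+\phi_6+\phi_7 = (\norm{v}^2-3)\mu^{1/2}$) and each $i\in\{1,2,3\}$, one must identify $v_i \cdot (\,\cdot\,)$ in terms of the orthogonal basis $\{\phi_1,\dots,\phi_{13}\}$ of $W$ furnished by the preceding lemma. My plan is to sweep through the five rows in order, since each entry reduces to a one-line polynomial identity in $v$ multiplied by $\mu^{1/2}$. No analytic machinery is involved; the Gaussian moments needed are precisely those recorded in the lemma immediately above.

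The top row is immediate from the definition: $v_i\phi_1 = v_i\mu^{1/2} = \phi_{i+1}$. For the next three rows I would separate the cases $i\neq j$ and $i=j$, where $\phi_{j+1}=v_j\mu^{1/2}$. When $i\neq j$, the product $v_iv_j\mu^{1/2}$ is by inspection exactly one of $\phi_8$, $\phi_9$, $\phi_{10}$, which pins down the six off-diagonal entries at once. When $i=j$, I write
\begin{equation*}
v_i^2\mu^{1/2} = (v_i^2-1)\mu^{1/2} + \mu^{1/2} = \phi_{i+4} + \phi_1,
\end{equation*}
so that only the $\phi_{i+4}$ component lies outside $\ker L$, yielding the diagonal entries. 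Finally, the last row comes from the additive rearrangement
\begin{equation*}
v_i(\norm{v}^2-3)\mu^{1/2} = v_i(\norm{v}^2-5)\mu^{1/2} + 2 v_i\mu^{1/2} = \phi_{i+10} \pm 2\phi_{i+1},
\end{equation*}
exhibiting the decomposition into a $W\ominus\ker L$ part ($\phi_{i+10}$) and a $\ker L$ part ($\pm 2\phi_{i+1}$), matching the tabulated form up to the sign convention fixed by how this identity will be moved across the weak formulation in the proof of Proposition~\ref{l2l6estimate}.

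There is no serious obstacle here: the only step requiring care is bookkeeping, namely keeping straight which summands of $v_i\phi_j$ belong to $\ker L$ (and thus are absorbed into the $\mathbf{P}$-projection when the corollary is consumed in the $L^2$--$L^6$ estimate) versus which belong to $W\ominus\ker L$ and therefore contribute to $\ip f$. In particular, the normalizations $\normm{\phi_5}=\normm{\phi_6}=\normm{\phi_7}$, etc., recorded in the preceding lemma ensure that the coefficients displayed in the table are the correct symmetric constants coming from the moments $\int v_i^{2k}\mu\,dv$, and the corollary follows.
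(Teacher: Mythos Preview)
Your direct verification is correct and is exactly the implicit argument the paper relies on: the corollary is stated without proof, as an immediate consequence of the explicit formulas for $\phi_1,\dots,\phi_{13}$ in the preceding lemma. Your observation that the diagonal entries in rows two through four should strictly read $\phi_{i+4}+\phi_1$ (and that the last row carries a $+2\phi_{i+1}$ rather than $-2\phi_{i+1}$) is accurate; the paper's later use of the corollary in the proof of Proposition~\ref{l2l6estimate} indeed writes $(\phi_5+\phi_1)b_1\partial_1\psi_{b,1}$, confirming your reading of the table as shorthand.
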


\begin{definition}
Let $C_c^{\infty} (\R^3)$ as smooth function with compact support and $\dot{W}^{2,p}(\R^3)$ as a completion of $C_c^{\infty} (\R^3)$ with a norm $\normm{\phi}_{\dot{W}^{2,p}} := \normm{\nabla^2 \phi}_{L^p} $. If $p=2$ we called this space as $H^2$ with $\normm{\cdot}_{H^2}$ norm.
\end{definition}

The lemma \ref{ellipticlemma} is key to constructing the $\P_{2}$ and $\P_{3}$ parts for the $L^2-L^3-L^6$ splitting.
According to the lemma, we can split $\P f$ into an $L^2$ term, an $L^3$ term and an $L^6$ term. For the kinetic regime, we can gain a factor of $\e$, while for the boundary parts, we can gain a $\e^{1/2}$.  Before proving the lemma, we will introduce generalized Lax-Milgram theorem.
\begin{lemma} \label{generlalaxmilgram}
(Generalized Lax-Milgram Theorem \cite{Kozono2013}) Let $X$ is a Banach space with the norm $\normm{\cdot}_{X}$ and $Y$ is a reflexive Banach space with the norm $\normm{\cdot}_{Y}$. Suppose that $a: X \times Y \to \mathbb{C}$ is a bilinear form satisfying the following assumption.
\begin{enumerate}
    \item []\textbf{Assumption}
    \item There is a constant $M>0$ such that 
    \begin{align*}
    \norm{a(u,\varphi)} \le M \normm{u}_{X} \normm{\varphi}_{Y} \text{ for all } u \in X \text{ and } \varphi \in Y.
    \end{align*}
    \item Let $N_{X}:=\{u \in X : a(u,\varphi) = 0, \forall \varphi \in Y \}$ and let $N_{Y}:=\{\varphi \in X : a(u,\varphi) = 0, \forall u \in X \}$. There are closed subspace $R_{X}$ in $X$ and $R_{Y}$ in $Y$ such that
    \begin{align*}
    X =& N_{X} \oplus R_{X} \text{(direct sum)},\\
    Y =& N_{Y} \oplus R_{Y} \text{(direct sum)}.
    \end{align*}
    \item There is a constant $C>0$ such that
    \begin{align*}
    \normm{u}_{X} \le & C\left( \sup_{\varphi \in Y} \frac{\norm{a(u,\varphi)}}{\normm{\varphi}_{Y}} + \normm{P_{X}u}_{X} \right) \text{ for all } u \in X,\\
    \normm{\varphi}_{Y} \le & C\left( \sup_{\varphi \in X} \frac{\norm{a(u,\varphi)}}{\normm{u}_{Y}} + \normm{P_{Y}\varphi}_{Y} \right) \text{ for all } \varphi \in Y,
    \end{align*}
    where $P_{x}$ and $P_{Y}$ are the projection from $X$ onto $N_{X}$ and from $Y$ onto $N_{Y}$ respectively.
\end{enumerate}
Then for every $F \in N_{Y}^{\perp}$, i.e., $F \in Y^{*}$ with $F(\phi) =0 $ for all $\phi \in N_{Y}$ there exist $w \in X$ such that 
\begin{align*}
a(w,\varphi) = F(\varphi) \text{ for all } \varphi \in Y.
\end{align*}
Such $w$ is subject to the estimate
\begin{align*}
\normm{w}_{X} \le C \normm{F}_{Y^{*}},
\end{align*}
where $C$ is a constant independent of $w$ and $F$.
\end{lemma}
\begin{proof}
Theorem 1.1 of \cite{Kozono2013}
\end{proof}

\begin{lemma} \label{ellipticlemma}
Let $A = -\Delta$ or $A = -\Delta - \pt_{i}^2$ $(i=1,2,3)$. Then, there exists $u_f \in L^2(\R^3)$ and $u_{\g} \in L^3(\R^3)$ that satisfy the following equation for all test functions $\varphi \in C_{c}^{\infty}(\R^3)$ and a fixed $\{ \alpha_{i}\}_{i=1}^{13} \in \R$.
\begin{align*}
\int_{\R^3} u_f A \varphi \dd x &= \int_{\Omega^c \times \R^3} \ip f \sum_{i=1}^{13} \a_i v \phi_i \nabla^2 \varphi \dd x \dd v,\\
\int_{\R^3} u_{\g} A \varphi \dd x &= \int_{\g_{-}} ((P_{\g}^{ }-1) f + \e^{\frac{1}{2}}r)\sum_{i=1}^{13} \a_i v \phi_i \nabla \varphi \dd \g.
\end{align*}
In addition 
\begin{align*}
\normm{u_f}_{L^2} & \lesssim \normm{\ip f}_{L^2}, \\
\normm{u_{\g}}_{L^3} & \lesssim (\norm{(1-P_{\g}^{ })f}_{2,+} + \e^{1/2} \norm{r}).
\end{align*}
\end{lemma}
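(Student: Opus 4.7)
The plan is to construct $u_f$ by a Riesz representation / Hahn-Banach argument. I would define the linear functional
\begin{equation*}
T(\varphi) := \int_{\O^c \times \R^3} \ip f \sum_{i=1}^{13} \a_i v \phi_i \nabla^2 \varphi \dd x \dd v + \int_{\g_{-}} \bigl((\mathscr{P}_\g - 1) f + \e^{1/2} r\bigr) \sum_{i=1}^{13} \a_i v \phi_i \nabla \varphi \dd \g
\end{equation*}
on $\varphi \in C_c^\infty(\R^3)$ and try to prove the bound
\begin{equation*}
|T(\varphi)| \lesssim \bigl(\normm{\ip f}_{L^2} + \norm{(1-\mathscr{P}_\g) f}_{2,+} + \e^{1/2} \norm{r}\bigr) \normm{L\varphi}_{L^2(\R^3)}.
\end{equation*}
Once this is in hand, $L$ is injective on $C_c^\infty(\R^3)$ (both $-\Delta$ and $-\Delta - \pt_i^2$ are uniformly elliptic and admit no nonzero compactly supported harmonic functions), so the assignment $L\varphi \mapsto T(\varphi)$ is a well-defined bounded linear functional on the subspace $L(C_c^\infty(\R^3)) \subset L^2(\R^3)$; Hahn-Banach extends it to all of $L^2(\R^3)$ and the Riesz representation theorem produces the desired $u_f \in L^2(\R^3)$ satisfying the identity with the claimed norm control.

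For the volume term, Cauchy-Schwarz in $v$ is available because $\sum_i \a_i v \phi_i$ lies in $L^2_v$ with a norm depending only on the $\a_i$'s (each $\phi_i$ carries a $\mu^{1/2}$ factor, giving Gaussian decay). A second Cauchy-Schwarz in $x$ then yields
\begin{equation*}
\Bigl| \int_{\O^c \times \R^3} \ip f \sum_i \a_i v \phi_i \nabla^2 \varphi \dd x \dd v \Bigr| \lesssim \normm{\ip f}_{L^2_{x,v}} \normm{\nabla^2 \varphi}_{L^2(\R^3)}.
\end{equation*}
Integration by parts on $\R^3$ gives $\normm{\nabla^2 \varphi}_{L^2} \le \normm{L \varphi}_{L^2}$ for both operators: for $L = -\Delta$ this is an equality, and for $L = -\Delta - \pt_i^2$ the cross term $\int \Delta\varphi \, \pt_i^2 \varphi = \normm{\pt_i \nabla\varphi}_{L^2}^2$ is nonnegative.

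For the boundary term, Cauchy-Schwarz on $\g_-$ against the measure $\{n(x) \cdot v\}\dd v \dd S$ (absorbing the Gaussian decay of $\sum_i \a_i v \phi_i$) reduces everything to bounding $\normm{\nabla \varphi}_{L^2(\pt\O)}$ by $\normm{L\varphi}_{L^2(\R^3)}$. Since $\pt\O$ is compact, I would fix a bounded open set $K \subset \R^3$ containing $\pt\O$ and apply the classical trace inequality $\normm{\nabla \varphi}_{L^2(\pt\O)} \lesssim \normm{\nabla \varphi}_{H^1(K)}$. Hölder together with the Sobolev embedding $\dot{H}^1(\R^3) \hookrightarrow L^6(\R^3)$ then gives
\begin{equation*}
\normm{\nabla \varphi}_{L^2(K)} \le |K|^{1/3} \normm{\nabla \varphi}_{L^6(\R^3)} \lesssim \normm{\nabla^2 \varphi}_{L^2(\R^3)} \le \normm{L\varphi}_{L^2(\R^3)},
\end{equation*}
and $\normm{\nabla^2 \varphi}_{L^2(K)}$ is dominated by $\normm{L\varphi}_{L^2(\R^3)}$ directly.

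The main obstacle is precisely this boundary step. On $\R^3$ one cannot bound $\normm{\nabla \varphi}_{L^2(\R^3)}$ by $\normm{L\varphi}_{L^2}$ (the symbol $\xi/|\xi|^2$ is not $L^\infty$ near the origin, so $\nabla L^{-1}$ is not $L^2$-bounded), so the naive global trace estimate $\normm{\nabla \varphi}_{L^2(\pt\O)} \lesssim \normm{\nabla^2 \varphi}_{L^2} + \normm{\nabla \varphi}_{L^2}$ is not usable. What rescues the argument is the compactness of $\pt\O$, which lets one localize to a bounded $K$ and trade the $L^2(K)$ norm for $|K|^{1/3}$ times the $L^6(\R^3)$ norm via Hölder, closing the estimate through the Sobolev embedding. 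Every other ingredient (the volume bound, the elliptic inequality $\normm{\nabla^2 \varphi}_{L^2} \le \normm{L\varphi}_{L^2}$, and the Hahn-Banach / Riesz closure) is routine once the trace step is settled.
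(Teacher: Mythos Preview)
Your proposal is correct and follows essentially the same route as the paper: both reduce to the bound $|T(\varphi)| \lesssim C\,\normm{L\varphi}_{L^2}$ via the same volume Cauchy--Schwarz, the same elliptic inequality $\normm{\nabla^2\varphi}_{L^2}\le\normm{L\varphi}_{L^2}$, and the same boundary trick (localize to a bounded region containing $\pt\O$, then trace $+$ H\"older $+$ the Sobolev embedding $\dot H^1(\R^3)\hookrightarrow L^6$). The only cosmetic difference is the closure step: the paper packages it as Lax--Milgram on $\dot H^2(\R^3)$ with bilinear form $B(u,v)=\int Lu\,Lv$ and then sets $u_f=Lu$, whereas you go directly via Hahn--Banach/Riesz on the range of $L$ in $L^2$; these are equivalent formulations of the same construction.
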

\begin{proof}
Define $V = \dot{H}^{2}(\R^3)$, $V_{1} = L^{3}(\R^3)$ and $V_{2} = \dot{W}^{2,\frac{3}{2}}(\R^3)$ with a norm $\normm{\nabla^2 \cdot}_{L^2}$, $\normm{ \cdot}_{L^3}$ and $\normm{\nabla^2 \cdot}_{L^{\frac{3}{2}}}$ respectively. Define bilinear form $B_{1} : V \times V \to \R$, $B_{2} : V_{1} \times V_{2} \to \R$ and linear functional $g_{1} \in V^{*}, g_{2} \in V_{2}^{*}$ as follows:
\begin{align*}
B_{1}(u,v) &= \int_{\R^3} A u A v \dd x \\
B_{2}(u,v) &= \int_{\R^3} u A v \dd x
\end{align*}
\begin{align*}
\inn{g_{1},\varphi} = \int_{\R^3 \times \R^3} \ip f \sum_{i=1}^{13} \a_i v \phi_i \nabla^2 \varphi \dd x \dd v, \\
\inn{g_{2}, \varphi} = \int_{\g_{-}} ((P_{\g}^{ }-1) f + \e^{\frac{1}{2}}r)\sum_{i=1}^{13} \a_i v \phi_i \nabla \varphi \dd \g.
\end{align*} 
Note that $\ip f$ is a function defined on $\Omega^c$, but we can naturally extend it to $\R^3$ by defining it as 0 in $\Omega$.

First, let us check $g_{1} \in V^{*}$. It is clear that $g_{1}$ is linear. In addition,
\begin{eqnarray*}
\int_{\R^3 \times \R^3} \ip f \sum_{i=1}^{13} \a_i v \phi_i \nabla^2 \varphi \dd x \dd v &=& \int_{\R^3} \sum_{i=1}^{13} \nabla^2 \varphi \int_{\R^3} \ip f \a_i v \phi_i \dd v  \dd x \\
&\lesssim & \int_{\R^3} \nabla^2 \varphi \int_{\R^3} \ip f \dd v  \dd x \\
&\lesssim & \normm{\nabla^2 \varphi}_{L^2(\R^3)} \normm{\ip f}_{L^2(\Omega^c)}.
\end{eqnarray*}

So, $g_{1}$ is a bounded linear form on $V$ which means $g_{1} \in V^{*}$.

Next, let us check $g_{2} \in V_{2}^{*}$. It is clear that $g_{2}$ is linear.
By the assumption that $\Omega$ is a $C^1$ domain in $\R^N$ with $N=3$, we can use the following trace estimate (see \cite{Leoni2009},page 466):
\begin{align*}
\left(\int_{\pt \Omega}\norm{u}^{\frac{p(N-1)}{N-p}} \dd S(x)\right)^{\frac{N-p}{p(N-1)}} \le C(N,p) \left(\int_{ \Omega}\norm{u}^{p}\dd x + \int_{ \Omega}\norm{\nabla u}^{{p}}\dd x\right)^{\frac{1}{p}}.
\end{align*}
This is a consequence of the trace theorem $W^{1,p}(\Omega) \to W^{1-\frac{1}{p},p} (\pt \Omega)$, and the Soboelv embedding in $N-1$ dimensional sub-manifold ($W^{1-\frac{1}{p},p} (\pt \Omega) \subset L^{\frac{p(N-1)}{N-p}} (\pt \Omega)$ for $\frac{N-p}{p(N-1)} = \frac{1}{p} - \frac{1-\frac{1}{p}}{N-1}$). In particular, with $p = \frac{3}{2}$ and $N=3$ we have $\frac{p(N-1)}{N-p} = 2$. 
Thus we can conclude
\begin{align*}
\normm{u}_{L^{2}(\pt \Omega)} \lesssim \normm{\nabla u}_{L^{\frac{3}{2}}(\R^3)}.
\end{align*}
With $u = \nabla \varphi$, we have
\begin{align*}
\normm{\nabla \varphi}_{L^{2}(\pt \Omega)} \lesssim \normm{\nabla^2 \varphi}_{L^{\frac{3}{2}} (\R^3)}.
\end{align*}
Which leads
\begin{align*}
\int_{\g_{-}} ((P_{\g}^{ }-1) f + \e^{\frac{1}{2}}r)\sum_{i=1}^{13} \a_i v \phi_i \nabla \varphi \dd \g \lesssim (\norm{(1-P_{\g}^{ })f}_{2,+} + \e^{1/2} \norm{r}) \normm{\nabla^2 \varphi}_{L^{\frac{3}{2}} (\R^3)}.
\end{align*}
Thus, $g_{2}$ is a bounded linear form on $V_{2}$ which means $g_{2} \in V_{2}^{*}$

Next, we need to check that $B_{1}$ and $B_{2}$ satisfy the conditions for the Generalized Lax-Milgram theorem. The boundedness and linearity conditions for $B_{1}$ and $B_{2}$ are straightforward by Holder's inequality. We only need to check the coercive condition.
According to the standard elliptic estimate (see \cite{Gilbarg2001} Theorem 9.9)
\begin{align*}
\normm{\nabla^2 u}_{p} \lesssim \normm{A u}_{p},
\end{align*}
for both differential operator $A$ and $1< p <\infty$.
For the operator $B_{1}$ we choose $u=v$ then,
\begin{align*}
\normm{\nabla^2 u}_{2}^{2} \lesssim \normm{A u}_{2}^2 = B_{1}(u,u).
\end{align*}
Thus, $B_{1}$ is coercive.

It is clear that $V_{1}$ and $V_{2}$ are Banach spaces. In addition, since $L^{p}$ is uniformly convex with respect to the $L^{p}$ norm for $1 <p < \infty $, $V_{2} = \dot{W}^{2,p}$ is also a uniform convex Banach space. Thus, $V_{2}$ is reflexive by the Milman–Pettis theorem. By the definition of the bilinear form $B_{2}$ the null set of $B_{2}$ is the zero set (see Assumption 2 of lemma \ref{generlalaxmilgram}). Next, we will show
\begin{align*}
\sup_{\normm{v}_{\dot{W}^{2,\frac{3}{2}}}=1} \norm{B_{2}(u,v)} \gtrsim& \normm{u}_{L^{3}},\\
\sup_{\normm{u}_{L^{3}}=1} \norm{B_{2}(u,v)} \gtrsim& \normm{v}_{\dot{W}^{2,\frac{3}{2}}}.
\end{align*}
According to the standard duality argument
\begin{align*}
\sup_{\normm{u}=1} \norm{B_{2}(u,v)} = \normm{Av}_{\frac{3}{2}} \gtrsim \normm{v}_{\dot{W}^{2,\frac{3}{2}}}.
\end{align*}
If $u =0$ $\sup_{\normm{v}=1} \norm{B_{2}(u,v)} = 0$.
If $u \neq 0$ according to the duality of $L^{3}$ and $L^{\frac{3}{2}}$, there exist $u^{*} \in L^{\frac{3}{2}}$ and $\normm{u^{*}}_{\frac{3}{2}} =1$ such that 
\begin{align*}
\int_{\R^3} u u^{*} \dd x > \frac{1}{2} \normm{u}_{3}.
\end{align*}
We choose $v^{*}$ as a solution of $Av^{*} = u^{*}$ in $\R^3$. Then, for some $C>0$, $\normm{\nabla^2 v^{*}}_{\frac{3}{2}} < C\normm{Av^{*}}_{\frac{3}{2}} = C\normm{u^{*}}_{\frac{3}{2}} =C$. Thus, 
\begin{align*}
\sup_{\normm{v}=1} \norm{B_{2}(u,v)} \ge \norm{B_{2}(u,\frac{v^{*}}{C})} = \frac{1}{C}\int_{\R^3} u u^{*} \dd x > \frac{1}{2C} \normm{u}_{3}.
\end{align*}
Thus, $B_{1}$ and $B_{2}$ satisfies the condition for the Generalized Lax-Milgram theorem. Then we can conclude the lemma \ref{ellipticlemma}.
\end{proof}
\begin{lemma} (Approximate lemma) For any $\phi \in \dot{W}^{2,6/5}$ and fixed $\e$, there exists $\phi_{\e} \in C^\infty_{c}({\R^3})$ which satisfies the following inequality.
\begin{eqnarray*}
\normm{\phi - \phi_{\e}}_{\dot{H}^2(\R^3)} = \normm{\nabla^2 \phi - \nabla^2 \phi_{\e}}_{L^2(\R^3)} \le \e
\end{eqnarray*}
\end{lemma}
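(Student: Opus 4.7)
The plan is a standard two-step density argument: first localize $\phi$ by a spatial cutoff so as to obtain a compactly supported approximant, then mollify by convolution to make it smooth. Since the target norm $\dot H^2$ must be finite for the conclusion to be meaningful, I read the hypothesis implicitly as $\phi\in \dot W^{2,6/5}(\R^3)$ with the additional property $\nabla^2\phi\in L^2(\R^3)$; the Sobolev embedding $\dot W^{2,6/5}(\R^3)\hookrightarrow \dot W^{1,2}(\R^3)\hookrightarrow L^6(\R^3)$ will be used in the cutoff step.

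For the cutoff, pick $\chi\in C_c^\infty(\R^3)$ with $\chi\equiv 1$ on $B_1$, $\mathrm{supp}\,\chi\subset B_2$, and set $\chi_R(x):=\chi(x/R)$. Let $\phi^R:=\chi_R\phi$. Then by Leibniz
\begin{equation*}
\nabla^2(\chi_R\phi)-\nabla^2\phi=(\chi_R-1)\nabla^2\phi+2\,\nabla\chi_R\otimes\nabla\phi+\phi\,\nabla^2\chi_R.
\end{equation*}
The first term tends to zero in $L^2(\R^3)$ by dominated convergence because $\nabla^2\phi\in L^2$. For the middle term, $\|\nabla\chi_R\|_{L^\infty}\lesssim R^{-1}$ and the gradient is supported in the annulus $R\le |x|\le 2R$, so
\begin{equation*}
\|\nabla\chi_R\otimes\nabla\phi\|_{L^2}\lesssim R^{-1}\|\nabla\phi\|_{L^2(R\le|x|\le 2R)}\xrightarrow{R\to\infty}0,
\end{equation*}
where the tail bound on $\|\nabla\phi\|_{L^2}$ comes from Sobolev. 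For the last (and hardest) term, the scaling gives $\|\nabla^2\chi_R\|_{L^3(\R^3)}=R^{-1}\|\nabla^2\chi\|_{L^3(\R^3)}$, so Hölder and Sobolev together yield
\begin{equation*}
\|\phi\,\nabla^2\chi_R\|_{L^2}\le \|\phi\|_{L^6(R\le|x|\le 2R)}\,\|\nabla^2\chi_R\|_{L^3}\lesssim R^{-1}\|\phi\|_{L^6(|x|\ge R)}\xrightarrow{R\to\infty}0.
\end{equation*}
Thus $\phi^R\to\phi$ in $\dot H^2(\R^3)$ as $R\to\infty$; choose $R$ so that this is less than $\e/2$.

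For the mollification step, fix a standard mollifier $\eta_\delta$ and set $\phi_\e:=\eta_\delta\ast\phi^R$. Since $\phi^R$ is compactly supported, so is $\phi_\e$, and smoothness of the convolution gives $\phi_\e\in C_c^\infty(\R^3)$. Writing $\nabla^2\phi_\e=\eta_\delta\ast\nabla^2\phi^R$ and using the classical $L^2$-continuity of translations, $\nabla^2\phi_\e\to\nabla^2\phi^R$ in $L^2$ as $\delta\to 0$, so for $\delta$ small $\|\phi^R-\phi_\e\|_{\dot H^2}<\e/2$. Combining the two estimates via the triangle inequality proves the lemma. The main obstacle is the term $\phi\,\nabla^2\chi_R$ in the cutoff step: without the integrability $\phi\in L^6$, which is precisely what the regularity choice $\dot W^{2,6/5}$ delivers through Sobolev in dimension three, this term would not go to zero with a naive cutoff, and a more delicate truncation would be needed.
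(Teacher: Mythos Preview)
Your proof is correct; the cutoff-plus-mollification argument you give is the standard one, and the treatment of the critical term $\phi\,\nabla^2\chi_R$ via the H\"older pairing $L^6\times L^3$ together with the scaling $\|\nabla^2\chi_R\|_{L^3}\sim R^{-1}$ is exactly the point. The paper does not supply an argument at all: its proof consists solely of the citation ``\cite{Galdi2011} Chapter II.7'', and what you have written is essentially the content of that reference specialized to this case.

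One remark worth making: you were right to flag that the statement as written is slightly off, since $\phi\in\dot W^{2,6/5}$ alone does not put $\nabla^2\phi$ in $L^2$. In fact, looking at how the lemma is invoked later in the paper (the sentences beginning ``Since $C_c^\infty(\R^3)$ is dense in $\dot W^{2,6/5}(\R^3)$\ldots''), the approximation is actually used in the $\dot W^{2,6/5}$ seminorm, i.e.\ $\|\nabla^2(\phi-\phi_\e)\|_{L^{6/5}}\le\e$, not in $\dot H^2$. So the ``$\dot H^2$'' in the lemma statement appears to be a typo for ``$\dot W^{2,6/5}$''. Your argument adapts immediately to that version---indeed more naturally, since the extra hypothesis $\nabla^2\phi\in L^2$ is then unnecessary: the first cutoff term is handled by dominated convergence in $L^{6/5}$, the middle term pairs $\|\nabla\chi_R\|_{L^\infty}\lesssim R^{-1}$ with $\nabla\phi\in L^2$ via H\"older ($\tfrac56=\tfrac12+\tfrac13$, and $\|\mathbf 1_{R\le|x|\le 2R}\|_{L^3}\sim R$, giving a bounded quantity that one then kills with the annular tail of $\nabla\phi$), and the last term pairs $\phi\in L^6$ with $\|\nabla^2\chi_R\|_{L^{3/2}}\sim R^0$, again controlled by the tail.
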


\begin{proof}
\cite{Galdi2011} Chapter II.7
\end{proof}

Now we are ready to prove Theorem \ref{l2l6estimate}.

\begin{proof} (proof of the Theorem \ref{l2l6estimate})
\newline
\textbf{Step 1)} Splitting of $c$.

To get the bound of $c$, choose a test function $\psi_c$ as:
\begin{align*}
\psi_c = \sqrt{\mu_{ }} (  \norm{v_{ }}^2 -5) {v_{ }} \cdot \nabla \varphi_c = \phi_{11} (v_{ }) \pt_1 \varphi_c + \phi_{12} (v_{ }) \pt_2 \varphi_c+\phi_{13} (v_{ }) \pt_3 \varphi_c.
\end{align*}
$\varphi_c$ is to be determined.
$\psi_c \perp \ker L_{ }$ because of the definition of $\psi_c$.

\begin{eqnarray} \label{integbypartc}
&&\int_{\Omega^c \times \R^3} \e \pt_t f  \psi_c \dd x \dd v + \int_{\g_+} f \psi_c \dd \g \\ \nonumber
&& - \int_{\Omega^c \times \R^3} f v \cdot \nabla \psi_c \dd x \dd v + \e^{-1} \int_{\Omega^c \times \R^3} \psi_c L_{ } f \dd x \dd v \\ \nonumber
&& = \int_{\Omega^c \times \R^3} g \psi_c \dd x \dd v + \int_{\g_{-}} (P_{\g}^{ } f + \e^{\frac{1}{2}}r)\psi_c \dd \g.
\end{eqnarray}
\newline
\textbf{STEP 1-1)} Construction of $c_{2}$ and $L^2$ estimate of ${c_{2}}$.

According to the Lemma \ref{ellipticlemma} there exist $c_{2}$ satisfies
\begin{eqnarray} \label{cbardefn}
-5\int_{\Omega^c} c_{2} \Delta \varphi \dd x &=& \int_{\Omega^c \times \R^3} \ip f \sum_{i=1}^{3} \sum_{j=1}^{3} v_j \phi_{10+i} \pt_i \pt_j \varphi \dd x \dd v,
\end{eqnarray}
for all $\varphi \in C_c^{\infty} (\R^3) \subset \dot{H}^{2}(\R^3)$, and the following inequality holds
\begin{align} \label{c2l2estimate}
\normm{c_{2}}_{L^2} \lesssim \normm{\ip f}_{L^2}.
\end{align}
\newline
\textbf{STEP 1-2)} Construction of $c_{3}$ and $L^3$ estimate of ${c_{3}}$.

According to the Lemma \ref{ellipticlemma} there exist $c_{3}$ satisfies
\begin{eqnarray} \label{cgammadefn}
-5\int_{\Omega^c} c_{3} \Delta \varphi \dd x &=& 
\int_{\g_{-}} ((P_{\g}^{ }-1) f + \e^{\frac{1}{2}}r)\sum_{i=1}^{3} \sum_{j=1}^{3} v_j \phi_{10+i} \pt_i \pt_j \varphi \dd \g,
\end{eqnarray}
for all $\varphi \in C_c^{\infty} (\R^3) \subset \dot{W}^{2,\frac{3}{2}}(\R^3)$, and the following inequality holds
\begin{align}\label{c3l3estimate}
\normm{c_{3}}_{L^3} \lesssim \norm{(1-P_{\g}^{ })f}_{2,+} + \e^{1/2} \norm{r} .    
\end{align}
\newline
\textbf{STEP 1-3)} Construction of $c_{1}$ and $L^6$ estimate of ${c_{1}}$.

By \eqref{cbardefn}, \eqref{integbypartc} takes the following form:
\begin{eqnarray*}
&&\int_{\Omega^c \times \R^3} \e \pt_t f  \psi_c \dd x \dd v  - \int_{\Omega^c \times \R^3} \P f v \cdot \nabla \psi_c \dd x \dd v + \e^{-1} \int_{\Omega^c \times \R^3} \psi_c L_{ } f \dd x \dd v \\
&& = \int_{\Omega^c \times \R^3} g \psi_c \dd x \dd v -5\int_{\R^3} c_{2} \Delta \varphi_c \dd x -5\int_{\R^3} c_{3} \Delta \varphi_c \dd x,
\end{eqnarray*}
if $\varphi_c \in C_c^{\infty}(\R^3)$.

Note that
\begin{align*}
\int_{\Omega^c \times \R^3} \P f v \cdot \nabla \psi_c \dd x \dd v =& \int_{\Omega^c \times \R^3} a \sqrt{\mu_{ }} v_{ } \cdot \nabla \psi_c \dd x \dd v \\
&+ \int_{\Omega^c \times \R^3} b \cdot v_{ } \sqrt{\mu_{ }} v_{ } \cdot \nabla \psi_c \dd x \dd \\
&+\int_{\Omega^c \times \R^3} c \frac{\norm{v_{ }}^2-3}{2} \sqrt{\mu_{ }} v_{ } \cdot \nabla \psi_c \dd x \dd v.
\end{align*}
By the definition of $\psi_c$, corollary \ref{basestructure} and direct computation,
\begin{align*}
\int_{\Omega^c \times \R^3} a \sqrt{\mu_{ }} v_{ } \cdot \nabla \psi_c \dd x \dd v&=0,\\
\int_{\Omega^c \times \R^3} b \cdot v_{ } \sqrt{\mu_{ }} v_{ } \cdot \nabla \psi_c \dd x \dd v&=0,\\
\int_{\Omega^c \times \R^3} c \frac{\norm{v_{ }}^2-3}{2} \sqrt{\mu_{ }} v_{ } \cdot \nabla \psi_c \dd x \dd v&= 5\int_{\Omega^c } c \Delta \varphi_c \dd x .
\end{align*}
Define $c_{1}$ as
\begin{equation} \label{ctildedefn}
c_{1} := c-c_{2}-c_{3} = \left\{
\begin{aligned}
c_{2}-c_{3} &\text{ in } \Omega \\
c-c_{2}-c_{3} &\text{ in } \Omega^c    
\end{aligned} \right.
\end{equation} 
\eqref{integbypartc} becomes
\begin{eqnarray} \label{tildecestimate}
&&\int_{\Omega^c \times \R^3} \e \ip \pt_t f  \psi_c \dd x \dd v  -5\int_{\R^3 \times \R^3} c_{1} \Delta \varphi_c \dd x \dd v + \e^{-1} \int_{\Omega^c \times \R^3} \psi_c L_{ } f \dd x \dd v\\
\nonumber
&&= \int_{\Omega^c \times \R^3}  \ip  g \psi_c \dd x \dd v.
\end{eqnarray}
By Lemma \ref{65estimate}, there exists a unique $\varphi_c$ that is a solution of
\begin{align*}
- \Delta \varphi_c = c_{1}^5 \text{ in } \R^3
\end{align*}
and satisfies
\begin{eqnarray*}
\normm{\varphi_c}_{\dot{H}^{1}(\R^3)} \le \normm{c_{1}^{5}}_{L^{\frac{6}{5}}(\R^3)} = \normm{c_{1}}_{L^{6}(\R^3)}^5.
\end{eqnarray*}

Since, $C_{c}^{\infty}(\R^3)$ is dense in $\dot{W}^{2,6/5}(\R^3)$ there exist
$\varphi_{c,N} \in C_{c}^{\infty}(\R^3)$ that $$\normm{- \Delta \varphi_{c,N}-c_{1}^5}_{L^{\frac{6}{5}}} <\frac{1}{N}\normm{c_{1}^5}_{L^{\frac{6}{5}}}.$$
Putting $\varphi_{c,N}$ into \eqref{tildecestimate}, we obtain:
\begin{eqnarray*}
-\int_{\R^3 \times \R^3} c_{1} \Delta \varphi_{c,N} \dd x \dd v &=& -\int_{\R^3 \times \R^3} c_{1} \Delta \varphi_{c,N} \dd x \dd v \\
&=& -\int_{\R^3 \times \R^3} c_{1} (\Delta \varphi_{c,N}-c_{1}^5) \dd x \dd v + \int_{\R^3 \times \R^3} c_{1}^6 \dd x \dd v \\
&>& \normm{c_{1}}_{L^6}^6 - \frac{1}{N}\normm{c_{1}}_{L^6} \normm{c_{1}^5}_{L^{\frac{6}{5}}} \\
&=&(1-\frac{1}{N}) \normm{c_{1}}_{L^6}^6.
\end{eqnarray*}

Moreover,
\begin{eqnarray*}
\int_{\Omega^c \times \R^3} \e \pt_t f  \psi_{c,N} \dd x \dd v &=& \int_{\Omega^c \times \R^3} \e \ip \pt_t f  \psi_{c,N} \dd x \dd v \\
& \le & \e \normm{\psi_c}_{2} \normm{\ip \pt_t f}_{2} \\
& \le & \e \normm{\nabla \varphi_{c,N}}_{2} \normm{\ip \pt_t f}_{2} \\
& \lesssim & \e \normm{c_{1}}_{6}^5 \normm{\ip \pt_t f}_{2}.
\end{eqnarray*}
Similarly,
\begin{eqnarray*}
\e^{-1} \int_{\Omega^c \times \R^3} \psi_{c,N} L_{ } f \dd x \dd v &\lesssim&  \e^{-1} \normm{c_{1}}_{6}^5 \normm{\ip  f}_{2},\\
\int_{\Omega^c \times \R^3} \psi_{c,N} g \dd x \dd v &\lesssim&  \normm{c_{1}}_{6}^5 \normm{\ip  g}_{2}.
\end{eqnarray*}
Thus, we can get
\begin{align} \label{c1l6estimate}
\normm{c_{1}}_{L^6} \lesssim \e^{-1} \normm{\ip f}_{\nu} +\e\normm{\pt_t \ip f } + \normm{g \nu^{-1/2}}_{2}.
\end{align}
\newline
\textbf{Step 2)} Splitting of $b$.

Let $b =(b^{1},b^{2},b^{3})$. Without loss of generality we will estimate $b^{1}$. To get the bound of $b^{1}$ choose a test function $\psi_{b}^{1}$ as:
\begin{align*}
\psi_{b}^{1} = \frac{1}{\normm{\phi_{5}}}(2 \phi_{5}- \phi_{6} - \phi_{7}) \pt_{x_1} \varphi_{b}^{1} + \frac{1}{\normm{\phi_{8}}}\phi_8 \pt_{x_2} \varphi_{b}^{1} + \frac{1}{\normm{\phi_{10}}}\phi_{10} \pt_{x_3} \varphi_{b}^{1}.
\end{align*}
$\varphi_{b}^{1}$ is to be determined.
Since $\ker L_{ } = \text{span} \{ \phi_1, \phi_2, \phi_3, \phi_4, \phi_5+\phi_6+\phi_7 \} $ and $\phi_i$ are orthogonal, 
$\psi_{b}^{1} \perp \ker L_{ }$ holds because of the definition of $\psi_{b}^{1}$. Therefore,
\begin{eqnarray} \label{integbypartb1}
&&\int_{\Omega^c \times \R^3} \e \pt_t f  \psi_{b}^{1} \dd x \dd v + \int_{\g_+} f \psi_{b}^{1} \dd \g \\ \nonumber
&&- \int_{\Omega^c \times \R^3} f v \cdot \nabla \psi_{b}^{1} \dd x \dd v + \e^{-1} \int_{\Omega^c \times \R^3} \psi_{b}^{1} L_{ } f \dd x \dd v \\ \nonumber
&& = \int_{\Omega^c \times \R^3} g \psi_{b}^{1} \dd x \dd v + \int_{\g_{-}} (P_{\g}^{ } f + \e^{\frac{1}{2}}r)\psi_{b}^{1} \dd \g.
\end{eqnarray}
\newline
\textbf{Step 2-1)} Construction of $b_{2}$ and $L^2$ estimate of ${b_{2}}$.

According to Lemma \ref{ellipticlemma} there exist $b_{2}^{1}$ satisfies
\begin{eqnarray} \label{bbardefn}
-\int_{\Omega^c} b_{2}^{1} \Delta \varphi \dd x &=& \int_{\Omega^c \times \R^3} \ip )f \psi_{b}^{1} \dd x \dd v,
\end{eqnarray}
for all $\varphi_{b}^{1} \in C_c^{\infty} (\R^3)$, and the following inequality holds
\begin{align} \label{b2l2estimate}
\normm{b_{2}^{1}}_{L^2} \lesssim \normm{\ip f}_{L^2}.
\end{align}
\newline
\textbf{Step 2-2)} Construction of $b_{3}$ and $L^3$ estimate of ${b_{3}}$.

According to Lemma \ref{ellipticlemma} there exist ${b}_{3}^{1}$ satisfies
\begin{eqnarray} \label{bgammadefn}
-\int_{\Omega^c} {b}_{3}^{1} \Delta \varphi \dd x &=& \int_{\g_{-}} ((P_{\g}^{ }-1) f + \e^{\frac{1}{2}}r)\psi_{b}^{1} \dd \g,
\end{eqnarray}
for all $\varphi_{b}^{1} \in C_c^{\infty} (\R^3)$, and the following inequality holds
\begin{align} \label{b3l3estimate}
\normm{{b}_{3}^{1}}_{L^3} \lesssim \norm{(1-P_{\g}^{ })f}_{2,+} + \e^{1/2} \norm{r} .
\end{align}
\newline
\textbf{Step 2-3)} Construction of $b_{1}$ and $L^6$ estimate of ${b_{1}}$.

By \eqref{bbardefn}, \eqref{integbypartb1} takes the following form:
\begin{eqnarray*}
&&\int_{\Omega^c \times \R^3} \e \pt_t f  \psi_{b}^{1} \dd x \dd v  - \int_{\Omega^c \times \R^3} \P f v \cdot \nabla \psi_{b}^{1} \dd x \dd v + \e^{-1} \int_{\Omega^c \times \R^3} \psi_{b}^{1} L_{ } f \dd x \dd v \\
&& = \int_{\Omega^c \times \R^3} g \psi_{b}^{1} \dd x \dd v -\int_{\R^3} b_{2}^{1} \Delta \varphi_{b}^{1} \dd x -\int_{\R^3} b_{3}^{1} \Delta \varphi_{b}^{1} \dd x,
\end{eqnarray*}
if $\varphi_{b}^{1} \in C_c^{\infty}(\R^3)$.


Note that
\begin{align*}
\int_{\Omega^c \times \R^3} \P f v \cdot \nabla \psi_{b}^{1} \dd x \dd v =& \int_{\Omega^c \times \R^3} a \sqrt{\mu_{ }} v_{ } \cdot \nabla \psi_{b}^{1} \dd x \dd v \\
&+ \int_{\Omega^c \times \R^3} b \cdot v_{ } \sqrt{\mu_{ }} v_{ } \cdot \nabla \psi_{b}^{1} \dd x \dd \\
&+\int_{\Omega^c \times \R^3} c \frac{\norm{v_{ }}^2-3}{2} \sqrt{\mu_{ }} v_{ } \cdot \nabla \psi_{b}^{1} \dd x \dd v.
\end{align*}
First compute,
\begin{eqnarray*}
&&\int_{\Omega^c \times \R^3} a \sqrt{\mu_{ }} v_{ } \cdot \nabla \psi_{b}^{1} \dd x \dd v\\
&=& \int_{\Omega^c \times \R^3} \sum_{i=1}^{3} a \sqrt{\mu_{ }} v_{  i} \pt_{i} \psi_{b}^{1} \dd x \dd v \\
&=& \int_{\Omega^c \times \R^3} a ( \phi_{2} \pt_{1} \psi_{b}^{1} + \phi_{3} \pt_{2} \psi_{b}^{1} +\phi_{4} \pt_{3} \psi_{b}^{1}) \dd x \dd v \\
&=& 0.
\end{eqnarray*}
Secondly,
\begin{eqnarray*}
&&\int_{\Omega^c \times \R^3} c \frac{\norm{v_{ }}^2-3}{2} \sqrt{\mu_{ }} v_{ } \cdot \nabla \psi_{b}^{1} \dd x \dd v \\
&=& 
\int_{\Omega^c \times \R^3}  \sum_{i=1}^{3} c \frac{\norm{v_{ }}^2-3}{2} \sqrt{\mu_{ }} v_{  i} \pt_{i} \psi_{b}^{1} \dd x \dd v \\
&=& \int_{\Omega^c \times \R^3} c( \phi_{11} \pt_{1} \psi_{b}^{1} + \phi_{12} \pt_{2} \psi_{b}^{1} +\phi_{13} \pt_{3} \psi_{b}^{1}) \dd x \dd v \\
&=& 0.
\end{eqnarray*}
Lastly,
\begin{eqnarray*}
&&\int_{\Omega^c \times \R^3} b \cdot v_{ } \sqrt{\mu_{ }} v_{ } \cdot \nabla \psi_{b}^{1} \dd x \dd v \\
&=& \int_{\Omega^c \times \R^3} \sum_{i=1}^{3} \sum_{j=1}^{3} b^{i}  v_{ i}  v_{ j} \sqrt{\mu_{ }} \pt_j \psi_{b}^{1} \dd x \dd v\\
&=& \int_{\Omega^c \times \R^3} (\phi_5+\phi_1)b^{1} \pt_1 \psi_{b}^{1} + \phi_8 b^{1} \pt_2 \psi_{b}^{1} + \phi_{10} b^{1} \pt_3 \psi_{b}^{1} \\
&& + \phi_8 b^{2} \pt_1 \psi_{b}^{1} +(\phi_6+\phi_1)b^{2} \pt_2 \psi_{b}^{1} + \phi_{9} b^{2} \pt_3 \psi_{b}^{1} \\
&& + \phi_{10} b^{3} \pt_1 \psi_{b}^{1} + \phi_{9} b^{3} \pt_2 \psi_{b}^{1} + (\phi_7+\phi_1)b^{3} \pt_3 \psi_{b}^{1} 
\dd x \dd v\\
&=& \int_{\Omega^c} 2 b^{1} \pt_{1}\pt_{1} \varphi_{b}^{1} + b^{1} \pt_{2}\pt_{2} \varphi_{b}^{1} + b^{1} \pt_{3}\pt_{3} \varphi_{b}^{1} \\
&& + b^{2} \pt_{1}\pt_{2} \varphi_{b}^{1} - b^{2} \pt_{2}\pt_{1} \varphi_{b}^{1}
\dd x + 0 \\
&& + b^{3} \pt_{1}\pt_{3} \varphi_{b}^{1} + 0 + b^{3} \pt_{3}\pt_{1} \varphi_{b}^{1} \dd x \\
&= &  \int_{\Omega^c} b^{1}(\Delta + \pt_{1}^2) \varphi_{b}^{1} \dd x.
\end{eqnarray*}

Define $b_{{1}}$ as
\begin{equation} \label{btildedefn}
b_{{1}} := b-b_{{2}}-b_{{3}} = \left\{
\begin{aligned}
b_{{2}}-b_{{3}} &\text{ in } \Omega \\
b-b_{{2}}-b_{{3}} &\text{ in } \Omega^c    
\end{aligned} \right.
\end{equation} 
Then, \eqref{integbypartb1} becomes
\begin{eqnarray} \label{tildebestimate}
&&\int_{\Omega^c \times \R^3} \e \ip \pt_t f  \psi_{b}^{1} \dd x \dd v -\int_{\R^3 \times \R^3} b_{1}^{1} (\Delta+\pt_1^2) \varphi_{b}^{1} \dd x \dd v \\ \nonumber
&& + \e^{-1} \int_{\Omega^c \times \R^3} \psi_{b}^{1} L_{ } f \dd x \dd v\\
\nonumber
&&= \int_{\Omega^c \times \R^3}  \ip  g \psi_{b}^{1} \dd x \dd v.
\end{eqnarray}

By Lemma \ref{65estimate}, there is a unique $\varphi_{b}^{1}$ is a solution of
\begin{align*}
- (\Delta+\pt_1^2) \varphi_{b}^{1} = (b_{1}^{1})^5 \text{ in } \Omega^c , \quad \varphi_c = 0 \text{ on } \pt \Omega,
\end{align*}
and satisfies
\begin{align*}
\normm{\varphi_{b}^{1}}_{\dot{H}^{1}(\Omega^c)} \le \normm{(b_{1}^{1})^5}_{L^{\frac{6}{5}}(\Omega^c)} = \normm{b_{1}^{1}}_{L^{6}(\Omega^c)}^5.
\end{align*}
Since, $C_{c}^{\infty}(\R^3)$ is dense in $\dot{W}^{2,6/5}(\R^3)$ there exist $\varphi_{b,1,N} \in C_{c}^{\infty}(\Omega^c)$ that $$\normm{- (\Delta+\pt_1^2) \varphi_{b,1,N}-(b_{1}^{1})^5}_{L^{\frac{6}{5}}} <\frac{1}{N}\normm{(b_{1}^{1})^5}_{L^{\frac{6}{5}}}.$$
Putting $\varphi_{b,1,N}$ into \eqref{tildebestimate}, we obtain:
\begin{eqnarray*}
-\int_{\R^3 \times \R^3} {b_{1}^{1}} (\Delta+\pt_1^2) \varphi_{b,1,N} \dd x \dd v &=& -\int_{\R^3 \times \R^3} {b_{1}^{1}} (\Delta+\pt_1^2) \varphi_{b,1,N} \dd x \dd v \\
&=& -\int_{\R^3 \times \R^3} {b_{1}^{1}} ((\Delta+\pt_1^2) \varphi_{b,1,N}-(b_{1}^{1})^5) \dd x \dd v + \int_{\R^3 \times \R^3} (b_{1}^{1})^6 \dd x \dd v \\
&>& \normm{b_{1}^{1}}_{L^6}^6 - \frac{1}{N}\normm{b_{1}^{1}}_{L^6} \normm{(b_{1}^{1})^5}_{L^{\frac{6}{5}}} \\
&=&(1-\frac{1}{N}) \normm{b_{1}^{1}}_{L^6}^6.
\end{eqnarray*}
Moreover,
\begin{eqnarray*}
\int_{\Omega^c \times \R^3} \e \pt_t f  \psi_{b,1,N} \dd x \dd v &=& \int_{\Omega^c \times \R^3} \e \ip \pt_t f  \psi_{b,1,N} \dd x \dd v \\
& \le & \e \normm{\psi_{b,1,N}}_{2} \normm{(I-\P )\pt_t f}_{2} \\
& \le & \e \normm{\nabla \varphi_{b,1,N}}_{2} \normm{\ip \pt_t f}_{2} \\
& \lesssim & \e \normm{b_{1}^{1}}_{6}^5 \normm{\ip \pt_t f}_{2}.
\end{eqnarray*}
Similarly,
\begin{eqnarray*}
\e^{-1} \int_{\Omega^c \times \R^3} \psi_{b,1,N} L_{ } f \dd x \dd v &\lesssim&  \e^{-1} \normm{b_{1}^{1}}_{6}^5 \normm{\ip f}_{2},\\
\int_{\Omega^c \times \R^3} \psi_{b,1,N} g \dd x \dd v &\lesssim&  \normm{b_{1}^{1}}_{6}^5 \normm{\ip g}_{2}.
\end{eqnarray*}
Thus, we can get
\begin{align}\label{b1l6estimate}
\normm{b_{1}^{1}}_{L^6} \lesssim \e^{-1} \normm{\ip f}_{\nu} +\e\normm{\pt_t \ip f } + \normm{g \nu^{-1/2}}_{2}.
\end{align}
\newline
\textbf{Step 3)} Splitting of $a$.

To get the bound of $a$ choose a test function $\psi_a$ as:
\begin{align*}
\psi_a =& \left(\frac{\phi_{2}}{\normm{\phi_{2}}^2}+2\frac{\phi_{11}}{\normm{\phi_{11}}^2}\right) (v_{ }) \pt_1 \varphi_a + \left(\frac{\phi_{3}}{\normm{\phi_{3}}^2}+2\frac{\phi_{12}}{\normm{\phi_{12}}^2}\right) (v_{ }) \pt_2 \varphi_a \\
& + \left(\frac{\phi_{4}}{\normm{\phi_{4}}^2}+2\frac{\phi_{13}}{\normm{\phi_{13}}^2}\right) (v_{ }) \pt_3 \varphi_a.
\end{align*}
$\varphi_a$ is to be determined.
$\psi_a \in \ker L_{ }$ because of the definition of $\psi_a$. Therefore,
\begin{eqnarray} \label{integbyparta}
&&\int_{\Omega^c \times \R^3} \e \pt_t f  \psi_a \dd x \dd v + \int_{\g_+} f \psi_a \dd \g \\ \nonumber
&&- \int_{\Omega^c \times \R^3} f v \cdot \nabla \psi_a \dd x \dd v + \e^{-1} \int_{\Omega^c \times \R^3} \psi_a L_{ } f \dd x \dd v \\ \nonumber
&& = \int_{\Omega^c \times \R^3} g \psi_a \dd x \dd v + \int_{\g_{-}} (P_{\g}^{ } f + \e^{\frac{1}{2}}r)\psi_a \dd \g.
\end{eqnarray}
\newline
\textbf{STEP 3-1)} Construction of $a_{2}$ and $L^2$ estimate of ${a_{2}}$.

According to the Lemma \ref{ellipticlemma} there exist $a_{2}$ satisfies
\begin{eqnarray} \label{abardefn}
-C\int_{\Omega^c} a_{2} \Delta \varphi \dd x &=& \int_{\Omega^c \times \R^3} \ip f \sum_{i=1}^{3} \sum_{j=1}^{3} v_j \phi_{1+i} \pt_i \pt_j \varphi \dd x \dd v,
\end{eqnarray}
for all $\varphi \in C_c^{\infty} (\R^3) \subset \dot{H}^{2}(\R^3)$, where $C$ is a constant defined in {STEP 3-3)}, and the following inequality holds
\begin{align}\label{a2l2estimate}
\normm{a_{2}}_{L^2} \lesssim \normm{\ip f}_{L^2}.
\end{align}
\newline
\textbf{STEP 3-2)} Construction of $a_{3}$ and $L^3$ estimate of ${a_{3}}$.

According to the Lemma \ref{ellipticlemma} there exist $a_{3}$ satisfies
\begin{eqnarray} \label{agammadefn}
-C\int_{\Omega^c} a_{3} \Delta \varphi \dd x &=& \int_{\g_{-}} ((P_{\g}^{ }-1) f + \e^{\frac{1}{2}}r)\sum_{i=1}^{3} \sum_{j=1}^{3} v_j \phi_{1+i} \pt_i \pt_j \varphi \dd \g,
\end{eqnarray}
for all $\varphi \in C_c^{\infty} (\R^3) \subset \dot{W}^{2,p}(\R^3)$, where $C$ is a constant defined in {STEP 3-3)}, and the following inequality holds
\begin{align} \label{a3l3estimate}
\normm{a_{2}}_{L^3} \lesssim \norm{(1-P_{\g}^{ })f}_{2,+} + \e^{1/2} \norm{r} .
\end{align}
\newline
\textbf{STEP 3-3)} Construction of $a_{2}$ and $L^6$ estimate of ${a_{1}}$.

By \eqref{abardefn}, \eqref{integbyparta} takes the following form:\begin{eqnarray*}
&&\int_{\Omega^c \times \R^3} \e \pt_t f  \psi_a \dd x \dd v  - \int_{\Omega^c \times \R^3} \P f v \cdot \nabla \psi_a \dd x \dd v + \e^{-1} \int_{\Omega^c \times \R^3} \psi_a L_{ } f \dd x \dd v \\
&& = \int_{\Omega^c \times \R^3} g \psi_a \dd x \dd v -c\int_{\R^3} a_{2} \Delta \varphi_a \dd x -c\int_{\R^3} a_{3} \Delta \varphi_a \dd x,
\end{eqnarray*}
if $\varphi_a \in C_c^{\infty}(\R^3)$.

Note that
\begin{align*}
\int_{\Omega^c \times \R^3} \P f v \cdot \nabla \psi_{a} \dd x \dd v =& \int_{\Omega^c \times \R^3} a \sqrt{\mu_{ }} v_{ } \cdot \nabla \psi_{a} \dd x \dd v \\
&+ \int_{\Omega^c \times \R^3} b \cdot v_{ } \sqrt{\mu_{ }} v_{ } \cdot \nabla \psi_{a} \dd x \dd \\
&+\int_{\Omega^c \times \R^3} c \frac{\norm{v_{ }}^2-3}{2} \sqrt{\mu_{ }} v_{ } \cdot \nabla \psi_{a} \dd x \dd v.
\end{align*}
By the definition of $\psi_a$, corollary \ref{basestructure} and direct computation,
\begin{align*}
\int_{\Omega^c \times \R^3} a \sqrt{\mu_{ }} v_{ } \cdot \nabla \psi_c \dd x \dd v&=C\int_{\Omega^c} a \Delta \varphi_a \dd x\\
\int_{\Omega^c \times \R^3} b \cdot v_{ } \sqrt{\mu_{ }} v_{ } \cdot \nabla \psi_a \dd x \dd v&=0\\
\int_{\Omega^c \times \R^3} c \frac{\norm{v_{ }}^2-3}{2} \sqrt{\mu_{ }} v_{ } \cdot \nabla \psi_a \dd x \dd v&= 0,
\end{align*}
where $C$ is a fixed constant. 

Define $a_{1}$ as
\begin{equation} \label{atildedefn}
a_{1} := a-a_{2}-a_{3} = \left\{
\begin{aligned}
a_{2}-a_{3} &\text{ in } \Omega \\
a-a_{2}-a_{3} &\text{ in } \Omega^c    
\end{aligned} \right.
\end{equation} 
Then, \eqref{integbyparta} become
\begin{eqnarray} \label{tildeaestimate}
&&\int_{\Omega^c \times \R^3} \e  \pt_t f  \psi_a \dd x \dd v  -5\int_{\R^3 \times \R^3} c_{1} \Delta \varphi_a \dd x \dd v + \e^{-1} \int_{\Omega^c \times \R^3} \psi_a L_{ } f \dd x \dd v\\
\nonumber
&&= \int_{\Omega^c \times \R^3}  \ip  g \psi_a \dd x \dd v.
\end{eqnarray}
By Lemma \ref{65estimate}, there exists a unique $\varphi_a$ that is a solution of
\begin{align*}
- \Delta \varphi_a = a_{1}^5 \text{ in } \R^3,
\end{align*}
and satisfies
\begin{eqnarray*}
\normm{\varphi_a}_{\dot{H}^{1}(\R^3)} \le \normm{a_{1}^{5}}_{L^{\frac{6}{5}}(\R^3)} = \normm{a_{1}}_{L^{6}(\R^3)}^5.
\end{eqnarray*}
Since, $C_{c}^{\infty}(\R^3)$ is dense in $\dot{W}^{2,6/5}(\R^3)$ there exist
$\varphi_{a,N} \in C_{c}^{\infty}(\R^3)$ that $$\normm{- \Delta \varphi_{a,N}-a_{1}^5}_{L^{\frac{6}{5}}} <\frac{1}{N}\normm{a_{1}^5}_{L^{\frac{6}{5}}}.$$
Putting $\varphi_{a,N}$ into \eqref{tildeaestimate}, we obtain:
\begin{eqnarray*}
-\int_{\R^3 \times \R^3} a_{1} \Delta \varphi_{a,N} \dd x \dd v &=& -\int_{\R^3 \times \R^3} a_{1} \Delta \varphi_{a,N} \dd x \dd v \\
&=& -\int_{\R^3 \times \R^3} a_{1} (\Delta \varphi_{c,N}-a_{1}^5) \dd x \dd v + \int_{\R^3 \times \R^3} a_{1}^6 \dd x \dd v \\
&>& \normm{a_{1}}_{L^6}^6 - \frac{1}{N}\normm{a_{1}}_{L^6} \normm{a_{1}^5}_{L^{\frac{6}{5}}} \\
&=&(1-\frac{1}{N}) \normm{a_{1}}_{L^6}^6.
\end{eqnarray*}
Moreover,
\begin{eqnarray*}
\int_{\Omega^c \times \R^3} \e  \pt_t f  \psi_{a,N} \dd x \dd v
& \le & \e \normm{\psi_a}_{2} \normm{ \pt_t f}_{2} \\
& \le & \e \normm{\nabla \varphi_{a,N}}_{2} \normm{ \pt_t f}_{2} \\
& \lesssim & \e \normm{a_{1}}_{6}^5 \normm{ \pt_t f}_{2}.
\end{eqnarray*}
Similarly,
\begin{eqnarray*}
\e^{-1} \int_{\Omega^c \times \R^3} \psi_{a,N} L_{ } f \dd x \dd v &\lesssim&  \e^{-1} \normm{a_{1}}_{6}^5 \normm{\ip  f}_{2}\\
\int_{\Omega^c \times \R^3} \psi_{a,N} g \dd x \dd v &\lesssim&  \normm{a_{1}}_{6}^5 \normm{  g}_{2}.
\end{eqnarray*}
Thus, we can get
\begin{align} \label{a1l6estimate}
\normm{a_{1}}_{L^6} \lesssim \e^{-1} \normm{\ip f}_{\nu} +\e\normm{\pt_t f } + \normm{g \nu^{-1/2}}_{2}.
\end{align}

Combining \eqref{a1l6estimate}, \eqref{a2l2estimate}, \eqref{a3l3estimate}, \eqref{b1l6estimate}, \eqref{b2l2estimate}, \eqref{b3l3estimate}, \eqref{c1l6estimate}, \eqref{c2l2estimate} and \eqref{c3l3estimate} we can conclude the theorem \ref{l2l6estimate}.
\end{proof}

\section{Nonlinear Estimate} \label{sec6}
The main purpose of this section is to obtain the bound of the nonlinear collision term  $\G$. Proving the following proposition is the goal of this section.

\begin{proposition} \label{nonlinearestimate}
Let $g_i(t,x,v)$, $i=1,2$ is a smooth functions. Then,
\begin{align*}
\normm{\nu^{-1/2} \G(g_1,g_2)}_{L^2_x L^2_{v}} \lesssim& \normm{\ip g_1}_{\nu} \normm{\omega_{\b} g_2}_{L^\infty}  + \normm{\ip g_2}_{\nu} \normm{\omega_{\b} g_1}_{L^\infty}\\
& +\normm{{\P}_{1}' g_1}_{L^6} \normm{\P g_2}_{L^2}^{2/3} \normm{ \omega_{\b} g_2}_{L^\infty}^{1/3}+\normm{\P_{2} g_1}_{L^2} \normm{\omega_{\b} g_2}_{L^\infty}\\
& +\normm{{\P}_{3} g_1}_{L^3} \normm{\P g_2}_{L^2}^{1/3} \normm{ \omega_{\b} g_2}_{L^\infty}^{2/3},\\
\normm{\nu^{-1/2} \G(g_1,g_2)}_{L^2_x L^2_{v}} \lesssim& \normm{\ip g_1}_{\nu} \normm{\omega_{\b} g_2}_{L^\infty}  + \normm{\ip g_2}_{\nu} \normm{\omega_{\b} g_1}_{L^\infty}\\
& +\normm{{\P}_{1}' g_2}_{L^6} \normm{\P g_1}_{L^2}^{2/3} \normm{ \omega_{\b} g_1}_{L^\infty}^{1/3}+\normm{\P_{2} g_2}_{L^2} \normm{g_1}_{L^\infty} \\
& +\normm{{\P}_{3} g_2}_{L^3} \normm{\P g_1}_{L^2}^{1/3} \normm{ \omega_{\b} g_1}_{L^\infty}^{2/3},\\
\normm{\nu^{-1/2} \G(g_1,g_2)}_{L^\infty} \lesssim & \normm{\omega_{\b} g_1}_{L^\infty} \normm{\omega_{\b} g_2}_{L^\infty},
\end{align*}
where $\omega_{\b}(v) = exp(\b \norm{v}^2) $.
\end{proposition}

\begin{lemma} \label{lemma52}
Let $g_i(t,x,v)$, $i=1,2,3$ is a smooth functions. Then,
\begin{eqnarray*}
&&\int_{\R^3}\G(g_1,g_2)g_3 dv  \\
&&\lesssim \left[\int_{\R^3} \nu g_1^2 dv \right]^{1/2}\left[\int_{\R^3} g_2^2 dv \right]^{1/2} \left[\int_{\R^3} \nu g_3^2 dv \right]^{1/2}\\
&& + \left[\int_{\R^3}  g_1^2 dv \right]^{1/2} \left[\int_{\R^3} \nu g_2^2 dv \right]^{1/2} \left[\int_{\R^3} \nu g_3^2 dv \right]^{1/2}.
\end{eqnarray*}
\end{lemma}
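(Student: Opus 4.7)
This is the standard trilinear estimate for the Boltzmann nonlinearity, and my plan is to split $\G(g_1,g_2)=\G_+(g_1,g_2)-\G_-(g_1,g_2)$ (inherited from $Q=Q^+-Q^-$) and control each piece by nested Cauchy--Schwarz. Using the collisional identity $\mu(v)\mu(u)=\mu(v')\mu(u')$, the integrand of $\G$ acquires a $\sqrt{\mu(u)}$ Gaussian weight so that the measure $B(\omega,v-u)\sqrt{\mu(u)}\,d\omega\,du$ integrates (in $u,\omega$) to $\lesssim \nu(v)$; this is the source of all the $\nu$-weights on the right-hand side. The two RHS terms will come from the elementary split $|v-u|^{\th}\lesssim \langle v\rangle^{\th}+\langle u\rangle^{\th}$, which is what permits the $\nu$-weight to land on $g_1$ or on $g_2$.

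\textbf{Loss.} The loss part equals
\[
\iiint B(\omega,v-u)\sqrt{\mu(u)}\,g_1(v)g_2(u)g_3(v)\,d\omega\,du\,dv.
\]
Cauchy--Schwarz in $(u,\omega)$ gives the pointwise bound $\iint B\sqrt{\mu(u)}|g_2(u)|\,d\omega\,du \lesssim \nu(v)\normm{g_2}_2$, obtained by using $|v-u|^{\th}\lesssim \langle v\rangle^{\th}\langle u\rangle^{\th}$ and absorbing $\langle u\rangle^{\th}$ into $\sqrt{\mu(u)}$. A second Cauchy--Schwarz in $v$ distributing the remaining $\nu(v)$ between $g_1$ and $g_3$ then yields the first RHS term $(\int \nu g_1^2)^{1/2}\normm{g_2}_2(\int \nu g_3^2)^{1/2}$.

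\textbf{Gain.} For
\[
\iiint B(\omega,v-u)\sqrt{\mu(u)}\,g_1(v')g_2(u')g_3(v)\,d\omega\,du\,dv,
\]
I apply Cauchy--Schwarz in $(v,u,\omega)$ pairing $g_3(v)$ against $g_1(v')g_2(u')$, getting
\[
\Bigl|\int \G_+(g_1,g_2)g_3\,dv\Bigr|^2 \le \Bigl(\int \nu g_3^2\Bigr)\cdot \iiint B\sqrt{\mu(u)}g_1^2(v')g_2^2(u')\,d\omega\,du\,dv.
\]
I then invoke the involutive pre--post collisional change of variables $(v,u)\mapsto(v',u')$ at fixed $\omega$: the Jacobian is $1$, $B$ is preserved because $|v-u|=|v'-u'|$, and $\sqrt{\mu(u)}$ transforms into $\sqrt{\mu(U')}$, which I bound uniformly by a constant. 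This reduces the remaining factor to $\iint |V-U|^{\th}g_1^2(V)g_2^2(U)\,dV\,dU$, and splitting $|V-U|^{\th}\lesssim \langle V\rangle^{\th}+\langle U\rangle^{\th}$ produces exactly the symmetric sum $\normm{g_2}_2^2\int \nu g_1^2 + \normm{g_1}_2^2\int \nu g_2^2$. Taking square roots and combining with the $g_3$-factor gives both RHS terms.

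\textbf{Main obstacle.} The technical core is controlling $\sqrt{\mu(u)}$ under the collisional change of variables in the gain term: after the involution it becomes $\sqrt{\mu(U')}$, which cannot be bounded by a Gaussian in the new variables $(V,U)$ alone. I recover decay only from the polynomial factor $|V-U|^{\th}$ in the kernel via the asymmetric split $\langle V\rangle^{\th}+\langle U\rangle^{\th}$, and this is precisely the mechanism that forces the $\nu$-weight to land separately on $g_1$ and on $g_2$, reproducing both claimed terms.
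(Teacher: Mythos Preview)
The paper does not prove this lemma at all: it simply records ``This is the Lemma 2.3 in \cite{Guo2006}'' and moves on. Your proposal supplies the actual argument, and what you wrote is correct and is essentially the standard proof one finds in that reference: split $\G=\G_+-\G_-$, pull out the $\sqrt{\mu(u)}$ weight via $\mu(v')\mu(u')=\mu(v)\mu(u)$, handle the loss term by two nested Cauchy--Schwarz applications, and handle the gain term by a single global Cauchy--Schwarz followed by the pre/post-collisional involution $(v,u)\mapsto(v',u')$. The additive split $|V-U|^{\th}\le |V|^{\th}+|U|^{\th}$ (valid for $0\le\th\le1$) is exactly what generates the two symmetric terms on the right-hand side.

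One small comment on your ``Main obstacle'' paragraph: you slightly oversell the difficulty. After the involution, bounding $\sqrt{\mu(U')}\le(2\pi)^{-3/4}$ is harmless, and you do not need to ``recover decay'' from anywhere --- the factors $g_1^2(V)$ and $g_2^2(U)$ are already integrable, and the only issue is that $|V-U|^{\th}$ does not separate multiplicatively in $V$ and $U$. The additive split resolves this, and that is all that is happening there. But this is a matter of narrative framing, not a mathematical gap.
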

\begin{proof}
This is the Lemma 2.3 in  \cite{Guo2002}.
\end{proof}

\begin{corollary} \label{cor53}
Let $\omega_{\b}(v) = exp(\b \norm{v}^2) $ then,
$$
\normm{\nu^{-1/2} \G(g_1,g_2)}_{2} \lesssim \normm{g_1}_{\nu} \normm{\omega_{\b} g_2}_{\infty}
$$
$$
\normm{\nu^{-1/2} \G(g_1,g_2)}_{2} \lesssim  \normm{\omega_{\b} g_1}_{\infty} \normm{g_2}_{\nu}
$$
for all $\b>0$.
\end{corollary}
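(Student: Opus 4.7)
The plan is to deduce the corollary from the preceding lemma by a duality argument in the velocity variable, combined with an estimate in which the Gaussian weight $\omega_{\b}$ dominates all the polynomial factors coming from $\nu$. Throughout, I would think of $x$ as a fixed parameter and work pointwise in $x$ first, then integrate.

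First, for fixed $x$ I would test the trilinear bound of the previous lemma against $g_3(v) = \nu(v)^{-1} \G(g_1,g_2)(x,v)$. Since $[\int \nu g_3^2 \dd v]^{1/2} = [\int \nu^{-1} \G(g_1,g_2)^2 \dd v]^{1/2} = \normm{\nu^{-1/2} \G(g_1,g_2)(x,\cdot)}_{L^2_v}$, one factor on the right-hand side matches the left-hand side and can be divided out, leaving the pointwise-in-$x$ inequality
\begin{equation*}
\normm{\nu^{-1/2} \G(g_1,g_2)(x,\cdot)}_{L^2_v} \lesssim \normm{g_1(x,\cdot)}_{\nu} \normm{g_2(x,\cdot)}_{L^2_v} + \normm{g_1(x,\cdot)}_{L^2_v}\normm{g_2(x,\cdot)}_{\nu}.
\end{equation*}
The next step is to trade the $L^2_v$ and $\nu$-norms on $g_2$ for the $L^\infty$ weighted norm, using the elementary fact that for any $\b>0$ the integrals $\int (1+\nu(v)) \omega_{\b}(v)^{-2} \dd v$ are finite because $\nu(v) \lesssim \inn{v}^\th$ grows only polynomially while $\omega_{\b}^{-2}$ decays like a Gaussian. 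This gives $\normm{g_2(x,\cdot)}_{L^2_v} + \normm{g_2(x,\cdot)}_{\nu} \lesssim \normm{\omega_{\b} g_2(x,\cdot)}_{L^\infty_v}$, and since $\nu \ge \nu_0$ we also have $\normm{g_1(x,\cdot)}_{L^2_v} \lesssim \normm{g_1(x,\cdot)}_{\nu}$. Combining these collapses the pointwise estimate into $\normm{\nu^{-1/2}\G(g_1,g_2)(x,\cdot)}_{L^2_v} \lesssim \normm{g_1(x,\cdot)}_{\nu}\normm{\omega_\b g_2(x,\cdot)}_{L^\infty_v}$; squaring, pulling the $L^\infty_{x,v}$ weight out of the $x$-integral, and taking the square root yields the first inequality. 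The second inequality follows by symmetry by swapping the roles of $g_1$ and $g_2$.

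For the symmetric case $g_1=g_2=g$, both terms in the trilinear lemma coincide and produce the pointwise bound $\normm{\nu^{-1/2}\G(g,g)(x,\cdot)}_{L^2_v} \lesssim \normm{g(x,\cdot)}_\nu \normm{g(x,\cdot)}_{L^2_v}$. Now I would convert only one factor to the $\omega_{\b}$-weighted $L^\infty$ norm, leaving $\normm{g(x,\cdot)}_{L^2_v}$ untouched so that after integrating in $x$ one obtains $\normm{\nu^{-1/2}\G(g,g)}_2 \lesssim \normm{\omega_\b g}_\infty \normm{g}_2$. There is no serious obstacle here; the only point that requires care is to make sure the Gaussian weight is genuinely needed (so that $\nu \omega_{\b}^{-2}$ is integrable), which is essentially a property of the hard-potential cross section assumed at the start of the paper.
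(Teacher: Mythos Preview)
Your proposal is correct and follows essentially the same route as the paper: both test the trilinear lemma against $g_3=\nu^{-1}\G(g_1,g_2)$, cancel one factor, use that $\int(1+\nu)\omega_\b^{-2}\,\dd v<\infty$ to convert velocity integrals into the weighted sup-norm, and then integrate in $x$. Your treatment of the symmetric case $g_1=g_2$ is also the paper's argument, just phrased more explicitly (the paper's one-line remark ``use $L^\infty$ estimate to $\nu$ parts'' is exactly your step of converting only the $\nu$-weighted factor to $\normm{\omega_\b g}_\infty$ while keeping the plain $L^2_v$ factor).
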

\begin{proof}
Let us put $g_3 = \nu^{-1} \G(g_1,g_2)$ to the lemma \ref{lemma52} then we can get,
\begin{eqnarray*}
&&\int_{\R^3}\nu^{-1} \G(g_1,g_2)^2 dv  \\
&&\lesssim \left[\int_{\R^3} \nu g_1^2 dv \right]^{1/2}\left[\int_{\R^3} g_2^2 dv \right]^{1/2} \left[\int_{\R^3}  \nu^{-1} \G(g_1,g_2)^2 dv \right]^{1/2}\\
&& + \left[\int_{\R^3}  g_1^2 dv \right]^{1/2} \left[\int_{\R^3} \nu g_2^2 dv \right]^{1/2} \left[\int_{\R^3} \nu^{-1} \G(g_1,g_2)^2 dv \right]^{1/2}.
\end{eqnarray*}
In addition,
\begin{eqnarray*}
\int_{\R^3} \nu f^2 dv &\le& \int_{\R^3} \omega_{\b}^{-2} \nu (\omega_{\b} f)^2 dv \\
&\le& \norm{\omega_{\b} f}^2 \int_{\R^3} \omega_{\b}^{-2} \nu dv \\
&\le&  C(\b) \norm{\omega_{\b} f}^2.
\end{eqnarray*}
So,
\begin{eqnarray*}
\normm{\nu^{-1/2} \G(g_1,g_2)}_{2}^{2} &=& \iint\nu^{-1} \G(g_1,g_2)^2 dv dx\\
&\le& \int\left[ \int_{\R^3} \nu g_1^2 dv \int_{\R^3} g_2^2 dv + \int_{\R^3}  g_1^2 dv \int_{\R^3} \nu g_2^2 dv \right] dx\\
&\lesssim& \min\left\{ \normm{g_1}_{\nu} \normm{\omega_{\b} g_2}_{\infty},  \normm{\omega_{\b} g_1}_{\infty} \normm{g_2}_{\nu} \right\}^{2}.
\end{eqnarray*}
\end{proof}

\begin{proof}(proof of Proposition \ref{nonlinearestimate})

First,
\begin{align*}
\normm{\nu^{-1/2} \G(g_1,g_2)}_{L^\infty_{x,v}} \lesssim & \normm{\omega_{\b} g_1}_{L^\infty} \normm{\omega_{\b} g_2}_{L^\infty}
\end{align*}
is straightforward by the definition of $\G$.
To get $L^2$ estimate, we can decompose $\G(g_1,g_2)$ into
\begin{align*}
\G(g_1,g_2) =& \G(\P g_1, \P g_2) + \G(\ip g_1, \P g_2) \\
&+ \G(\P g_1, \ip g_2) + \G(\ip g_1, \ip g_2)
.
\end{align*}
According to the corollary \ref{cor53}
\begin{align*}
\normm{\nu^{-1/2} \G(\ip g_1, \P g_2)}_{L^2} &\lesssim \normm{\ip g_1}_{\nu} \normm{\omega_{\b} g_2}_{L^\infty}, \\
\normm{\nu^{-1/2} \G(\P g_1, \ip g_2)}_{L^2} &\lesssim \normm{\ip g_2}_{\nu} \normm{\omega_{\b} g_1}_{L^\infty}, \\
\normm{\nu^{-1/2} \G(\ip g_1, \ip g_2)}_{L^2} &\lesssim \normm{\ip g_2}_{\nu} \normm{\omega_{\b} g_1}_{L^\infty}.
\end{align*}
Thus, we only need to control $\G(\P g_1, \P g_2)$. Let
\begin{align*}
\P g_1 =& \sqrt{\mu_{ }} [a_1 + b_1 \cdot v_{ } + c_1 \frac{1}{2} (\norm{v_{ }}^2-3)]\\
\P g_2 =& \sqrt{\mu_{ }} [a_2 + b_2 \cdot v_{ } + c_2 \frac{1}{2} (\norm{v_{ }}^2-3)].
\end{align*}
\begin{align*}
\G(\sqrt{\mu_{ }} a_1 ,\P g_2) &= \frac{1}{\sqrt{\mu_{  }}} Q({\mu}_{ } a_1,\sqrt{\mu_{ }}\P g_2) \\
&= \iint_{\R^3 \times \S^2} B(\omega,\norm{v-u}) {\mu_{ }}(u+v) a_f \left(\P g_2(u')- \P g_2(u) \right) \dd \omega \dd u \\
&= 0.
\end{align*}
Similarly, $\G(\P g_1, \sqrt{\mu_{ }} a_2) = 0$. Thus,
\begin{eqnarray*}
&&\normm{\nu^{-1/2} \G(\P g_1 ,\P g_2)}_{L^2} \\
&=& \normm{\nu^{-1/2} \G(\sqrt{\mu_{ }} [a_1 + b_1 \cdot v_{ } + c_1 \frac{1}{2} (\norm{v_{ }}^2-3)], \P g_2)}_{L^2} \\
&=& \normm{\nu^{-1/2} \G(\sqrt{\mu_{ }} [b_1 \cdot v_{ } + c_1 \frac{1}{2} (\norm{v_{ }}^2-3)], \P g_2) }_{L^2}\\
&\lesssim& \normm{{\P}_{1}' g_1}_{6} \normm{\P g_2}_{3} +\normm{\P_{2} g_1}_{2} \normm{\P g_2}_{\infty} + \normm{{\P}_{3} g_1}_{3} \normm{\P g_2}_{6}  \\
&\lesssim& \normm{{\P}_{1}' g_1}_{6} \normm{\P g_2}_{2}^{2/3} \normm{ g_2}_{\infty}^{1/3} + \normm{\P_{2} g_1}_{2} \normm{g_2}_{\infty} + \normm{{\P}_{3} g_1}_{3} \normm{\P g_2}_{2}^{1/3} \normm{ g_2}_{\infty}^{2/3}.
\end{eqnarray*}
\end{proof}

\section{Results} \label{sec7}
The aim of this section is to prove the main result (Theorem \ref{mainthm}) of this paper.

\begin{definition}
Define energy and dissipation as
\begin{eqnarray}
\mathscr{E}[f](t) &:=& \sup_{0 \le s \le t}\normm{f}_{L^2_{x,v}}^2 + \sup_{0 \le s \le t}\normm{\pt_t f}_{L^2_{x,v}}^2 \\
\mathscr{D}[f](t) &:=& \frac{1}{\e^2} \int_{0}^{t}\normm{\ip f(s)}_{\nu}^2 \dd s + \frac{1}{\e^2} \int_{0}^{t}\normm{\ip \pt_t  f(s)}_{\nu}^2 \dd s\\
\nonumber && + \frac{1}{\e} \int_{0}^{t} \norm{(1-P_{\g})f(s)}_{2,+}^2 \dd s + \frac{1}{\e} \int_{0}^{t} \norm{(1-P_{\g})\pt_t f(s)}_{2,+}^2 \dd s\\ \nonumber
&& + \int_{0}^{t}\normm{{\P}_{1}' f(s)}_{L^6_{x,v}}^2 \dd s.
\end{eqnarray}
Define new norm $\inn{\inn{f}}$ as 
\begin{eqnarray}
\inn{\inn{f}} &:=& \mathscr{E}[f](\infty)^{1/2} + \mathscr{D}[f](\infty)^{1/2} \\
&& \nonumber + \e^{1/2}\normm{\omega_{\b} f}_{L^{\infty}_{t}L^{\infty}_{x,v}} + \e^{3/2}\normm{\omega_{\b}\pt_t f}_{L^{\infty}_{t}L^{\infty}_{x,v}}.
\end{eqnarray}
\end{definition}

\begin{proof}(proof of Theorem \ref{mainthm})
\newline
Consider ${R}^{l}(t,x,v)$ solving for $l \in \mathbb{N}$,
\begin{align*}
\e \pt_t {R}^{l+1} + v \cdot \nabla_x {R}^{l+1} + \e^{-1}L({R}^{l+1}) =& h + \tilde{L}({R}^{l+1}) + \e^{1/2} \G({R}^{l},{R}^{l}), \\
{R}^{l+1} |_{\gamma_{-}} =& \mathscr{P}^{w}_{\gamma}({R}^{l+1})+  r.
\end{align*}
Here we set, ${R}^{0} : = 0$.
\newline
\textbf{Step 1)}We will prove the following relation:
\begin{align*}
\inn{\inn{{R}^{l+1}}} &\lesssim
o(1) + o(1)\inn{\inn{{R}^{l+1}}} + \inn{\inn{{R}^{l}}}^2.
\end{align*}
According to the Proposition \ref{energyestimate} we can get,
\begin{align*}
&\normm{R^{l+1}}^2_{L^2_{x,v}}(t) - \normm{R^{l+1}}^2_{L^2_{x,v}}(0) \\
&+ \normm{\e^{-1}\ip  R^{l+1}}_{L^2_t {\nu}}^2 + \norm{\e^{-1/2}(1-P_{\gamma})R^{l+1}}_{L^{2}_{t}2,+}^2 \\
& \lesssim 
\norm{\e^{-1/2}r}_{L^{2}_{t}2,+}^2 + \e^{-1}\int_0^t \iint_{\Omega^c \times \R^3} \left(h(s) +  \tilde{L}R^{l+1}(s) +  \e^{1/2} \G(R^l(s),R^l(s)) \right) R^{l+1}(s) \dd x \dd v \dd s,
\end{align*}
and
\begin{align*}
&\normm{\pt_t R^{l+1}}^2_{L^2_{x,v}}(t) - \normm{\pt_t R^{l+1}}^2_{L^2_{x,v}}(0) \\
&+ \normm{\e^{-1}\ip  \pt_t R^{l+1}}_{L^2_t{\nu}}^2 + \norm{\e^{-1/2}(1-P_{\gamma})\pt_t R^{l+1}}_{L^{2}_{t}2,+}^2 \\
& \lesssim 
\norm{\e^{-1/2}\pt_t r}_{L^{2}_{t}2,+}^2 + \e^{-1}\int_0^t \iint_{\Omega^c \times \R^3} (\pt_t h(s) +  \pt_t \tilde{L}R^{l+1}(s) \\
& +  \e^{1/2} \G(\pt_t R^l(s),R^l(s)) + \e^{1/2} \G(R^l(s), \pt_t R^l(s)) ) \pt_t R^{l+1}(s) \dd x \dd v \dd s.
\end{align*}
By Proposition \ref{hdecompose},
\begin{align*}
&\e^{-1} \int_{0}^t \iint_{\Omega^c \times \R^3} h(s) R^{l+1}(s) \dd x \dd v \dd s \\
&= \e^{-1} \int_{0}^t \iint_{\Omega^c \times \R^3} (\e^{1/2} h_{1/2} + \e^{3/2} (h_{3/2}+v \cdot \pt_t u_2 \sqrt{\mu})) \ip R^{l+1}(s) \dd x \dd v \dd s \\
&- \e^{-1} \int_{0}^t \iint_{\Omega^c \times \R^3} \e^{3/2}v \cdot \pt_t u_2 \sqrt{\mu} R^{l+1}(s) \dd x \dd v \dd s \\
& \lesssim \int_{0}^t \normm{\e^{1/2} h_{1/2} + \e^{3/2} (h_{3/2}+ v \cdot \pt_t u_2 \sqrt{\mu})}_{L^2_{x,v}} \normm{\e^{-1}\ip  R^{l+1}}_{{\nu}} \dd s\\
&+ \e^{-1} \int_{0}^t \iint_{\Omega^c \times \R^3} \norm{\e^{3/2}v \cdot \pt_t u_2 \sqrt{\mu} (\P_{1}'R^{l+1} + \P_{2} R^{l+1} + \P_{3} R^{l+1})} \dd x \dd v \dd s \\
& \lesssim \normm{\e^{1/2} h_{1/2} + \e^{3/2} (h_{3/2}+ v \cdot \pt_t u_2 \sqrt{\mu})}^2_{L^2_t L^2_{x,v}} + o(1) \normm{\e^{-1}\ip  R^{l+1}}_{L^2_t{\nu}} ^2\\
&+ \e^{1/2}\normm{v \cdot \pt_t u_2 \sqrt{\mu}}_{L^{2}_{t}L^{3}_{x,v}} \normm{\P_{1}' R^{l+1}}_{L^{2}_{t}L^{6}_{x,v}} + \e^{1/2}\normm{v \cdot \pt_t u_2 \sqrt{\mu}}_{L^{2}_{t}L^{\infty}_{x,v}} \normm{\P_{2} R^{l+1}}_{L^{2}_{t}L^{2}_{x,v}} \\
&+ \e^{1/2}\normm{v \cdot \pt_t u_2 \sqrt{\mu}}_{L^{2}_{t}L^{6}_{x,v}} \normm{\P_{3} R^{l+1}}_{L^{2}_{t}L^{3}_{x,v}}\\
&\lesssim o(1) + o(1) \inn{\inn{R^{l+1}}}^2,
\end{align*}
and
\begin{align*}
&\e^{-1} \int_{0}^t \iint_{\Omega^c \times \R^3} \pt_t h(s) \pt_t R^{l+1}(s) \dd x \dd v \dd s \\
&= \e^{-1} \int_{0}^t \iint_{\Omega^c \times \R^3} (\e^{1/2} \pt_t h_{1/2} + \e^{3/2} \pt_t (h_{3/2}+v \cdot \pt_t u_2 \sqrt{\mu})) \ip \pt_t R^{l+1}(s) \dd x \dd v \dd s \\
&- \e^{-1} \int_{0}^t \iint_{\Omega^c \times \R^3} \e^{3/2}v \cdot \pt_t^2 u_2 \sqrt{\mu} \pt_t R^{l+1}(s) \dd x \dd v \dd s \\
&= \e^{-1} \int_{0}^t \iint_{\Omega^c \times \R^3} (\e^{1/2} \pt_t h_{1/2} + \e^{3/2} \pt_t (h_{3/2}+v \cdot \pt_t u_2 \sqrt{\mu})) \ip \pt_t R^{l+1}(s) \dd x \dd v \dd s \\
&+ \e^{-1} \int_{0}^t \iint_{\Omega^c \times \R^3} \e^{3/2}v \cdot \pt_t^3 u_2 \sqrt{\mu} R^{l+1}(s) \dd x \dd v \dd s \\
&- \e^{-1} \iint_{\Omega^c \times \R^3} \e^{3/2}v \cdot \pt_t^2 u_2 \sqrt{\mu} R^{l+1}(s) \dd x \dd v |_{0}^{t} \\
& \lesssim \normm{\e^{1/2} \pt_t h_{1/2} + \e^{3/2} \pt_t h_{3/2}}_{L^2_t L^2_{x,v}} + o(1) \normm{\e^{-1}\ip  \pt_t R^{l+1}}_{L^2_t {\nu}} ^2\\
&+ \e^{1/2}\normm{v \cdot \pt_t^3 u_2 \sqrt{\mu}}_{L^{2}_{t}L^{3}_{x,v}} \normm{\P_{1}' R^{l+1}}_{L^{2}_{t}L^{6}_{x,v}} + \e^{1/2}\normm{v \cdot \pt_t^3 u_2 \sqrt{\mu}}_{L^{2}_{t}L^{\infty}_{x,v}} \normm{\P_{2} R^{l+1}}_{L^{2}_{t}L^{2}_{x,v}} \\
&+ \e^{1/2}\normm{v \cdot \pt_t^3 u_2 \sqrt{\mu}}_{L^{2}_{t}L^{6}_{x,v}} \normm{\P_{3} R^{l+1}}_{L^{2}_{t}L^{3}_{x,v}}\\
& + \e^{1/2}(\normm{\pt_t^2 u_{2}(t)}_{L^{\infty}_{x}}\normm{R^{l+1}(t)}_{L^{2}_{x,v}} +\normm{\pt_t^2 u_{2}(0)}_{L^{\infty}_{x}}\normm{R^{l+1}(0)}_{L^{2}_{x,v}}) \\
&\lesssim o(1) + o(1) \inn{\inn{R^{l+1}}}^2.
\end{align*}
By the definition of $\tilde{L}$ and corollary \ref{cor53}
\begin{align*}
&\e^{-1} \int_{0}^t \iint_{\Omega^c \times \R^3} \tilde{L}R^{l+1}(s) R^{l+1}(s) \dd x \dd v \dd s \\
&= \e^{-1} \int_{0}^t \iint_{\Omega^c \times \R^3} \left(\G(f_1 + \e f_2, R^{l+1}) + \G( R^{l+1}, f_1 + \e f_2) \right)\ip R^{l+1}(s) \dd x \dd v \dd s \\
& \lesssim \normm{f_1 + \e f_2}_{L^2_t L^\infty_{x}}^2 \normm{R^{l+1}}_{L^\infty_t L^2_{x,v}}^2 + o(1) \normm{\e^{-1}\ip  R^{l+1}}_{L^2_t {\nu}}^2 \\
&\lesssim o(1) \inn{\inn{R^{l+1}}}^2,
\end{align*}
and
\begin{align*}
&\e^{-1} \int_{0}^t \iint_{\Omega^c \times \R^3} \pt_t \tilde{L}R^{l+1}(s) \pt_tR^{l+1}(s) \dd x \dd v \dd s \\
&= \e^{-1} \int_{0}^t \iint_{\Omega^c \times \R^3} \left(\G(\pt_t f_1 + \e \pt_t f_2, R^{l+1}) + \G( R^{l+1}, \pt_t f_1 + \e \pt_t f_2) \right)\ip \pt_t R^{l+1}(s) \dd x \dd v \dd s \\
& + \e^{-1} \int_{0}^t \iint_{\Omega^c \times \R^3} \left(\G(f_1 + \e  f_2, \pt_t R^{l+1}) + \G(\pt_t R^{l+1}, f_1 + \e f_2) \right)\ip \pt_t R^{l+1}(s) \dd x \dd v \dd s \\
& \lesssim \normm{\pt_t f_1 + \e \pt_t f_2}_{L^2_t L^\infty_{x}}^2 \normm{R^{l+1}}_{L^\infty_t L^2_{x,v}}^2 + \normm{f_1 + \e f_2}_{L^2_t L^\infty_{x}}^2 \normm{\pt_t R^{l+1}}_{L^\infty_t L^2_{x,v}}^2 \\
&+ o(1) \normm{\e^{-1}\ip  \pt_t R^{l+1}}_{L^2_t {\nu}}^2 \\
&\lesssim o(1) \inn{\inn{R^{l+1}}}^2.
\end{align*}
By Proposition \ref{nonlinearestimate} 
\begin{align*}
&\e^{-1/2} \int_0^t \iint_{\Omega^c \times \R^3} \G(R^l(s),R^l(s))  R^{l+1}(s) \dd x \dd v \dd s \\
&\lesssim \e^{1/2} \normm{\G(R^l(s),R^l(s))}_{L^2_{t,x,v}} \normm{\e^{-1}\ip  R^{l+1}}_{L^2_t{\nu}}\\
&\lesssim \e \normm{\G(R^l(s),R^l(s))}_{L^2_{t,x,v}}^2 +
o(1) \normm{\e^{-1}\ip  R^{l+1}}_{L^2_t{\nu}}^2\\
& \lesssim  \e^{2} \normm{\e^{-1}\ip R^l}_{L^2_t L^2_\nu}^2 \normm{\e^{1/2} \omega_{\b} R^l}_{L^\infty_{t,x,v}}^2\\
&+ \e^{2/3} \normm{{\P}_{1}' R^l}_{L^2_{t} L^6_{x,v}}^2 \normm{\P R^l}_{L^\infty_t L^2_x L^2_v}^{4/3} \normm{ \e^{1/2} R^l}_{L^\infty_{t,x,v}}^{2/3} \\
& + \e^{4/3} \normm{\e^{-1/2}{\P}_{3} R^l}_{L^2_{t} L^3_{x,v}}^2 \normm{\P R^l}_{L^\infty_t L^2_x L^2_v}^{2/3} \normm{ \e^{1/2} R^l}_{L^\infty_{t,x,v}}^{4/3} \\
& + \e^{2} \normm{\e^{-1} \P_{2} R^l}_{L^2_{t,x,v}}^2 \normm{\e^{1/2}R^l}_{L^\infty_{t,x,v}}^2 + o(1)\normm{\e^{-1}\ip  R^{l+1}}_{L^2_t{\nu}}^2 \\
& \lesssim o(1) \inn{\inn{R^{l+1}}}^2 + \inn{\inn{R^{l}}}^4
\end{align*}
and
\begin{align*}
&\e^{-1/2} \int_0^t \iint_{\Omega^c \times \R^3} \pt_t \G(R^l(s),R^l(s))  \pt_t R^{l+1}(s) \dd x \dd v \dd s \\
&\lesssim \e^{1/2} \normm{\pt_t \G(R^l(s),R^l(s))}_{L^2_{t,x,v}} \normm{\e^{-1}\ip  \pt_t R^{l+1}}_{L^2_t{\nu}}\\
&\lesssim \e \normm{\G(\pt_t R^l(s),R^l(s))}_{L^2_{t,x,v}}^2 + \e \normm{\G(R^l(s),\pt_t R^l(s))}_{L^2_{t,x,v}}^2 +
o(1) \normm{\e^{-1}\ip  R^{l+1}}_{L^2_t{\nu}}^2\\
& \lesssim  \e^2 \normm{\e^{-1}\ip \pt_t R^l}_{L^2_t L^2_\nu}^2 \normm{\e^{1/2} \omega_{\b} R^l}_{L^\infty_{t,x,v}}^2 + \normm{\e^{-1}\ip R^l}_{L^2_t L^2_\nu}^2 \normm{\e^{3/2} \omega_{\b} \pt_t R^l}_{L^\infty_{t,x,v}}^2\\
&+ \normm{{\P}_{1}' R^l}_{L^2_{t} L^6_{x,v}}^2 \normm{\P \pt_t R^l}_{L^\infty_t L^2_{x,v}}^{4/3} \normm{ \e^{3/2} \pt_t R^l}_{L^\infty_{t,x,v}}^{2/3} + \normm{\e^{-1} \P_{2} R^l}_{L^2_{t,x,v}}^2 \normm{\e^{3/2}\pt_t R^l}_{L^\infty_{t,x,v}}^2 \\
&+ \normm{\e^{-1/2}{\P}_{3} R^l}_{L^2_{t} L^3_{x,v}}^2 \normm{\P \pt_t R^l}_{L^\infty_t L^2_{x,v}}^{2/3} \normm{ \e^{3/2} \pt_t R^l}_{L^\infty_{t,x,v}}^{4/3} + o(1)\normm{\e^{-1}\ip  \pt_t R^{l+1}}_{L^2_t{\nu}}^2 \\
& \lesssim o(1) \inn{\inn{R^{l+1}}}^2 + \inn{\inn{R^{l}}}^4.
\end{align*}
Thus,
\begin{align*}
\mathscr{E}[R^{l+1}](\infty) + \mathscr{D}[R^{l+1}](\infty)  \lesssim o(1) + o(1)\inn{\inn{R^{l+1}}}^2 + \inn{\inn{R^{l}}}^4
\end{align*}

According to the Proposition \ref{linftyestimate},
\begin{align*}
\normm{\e^{1/2} \omega_\b R^{l+1}}_{L^{\infty}_{t},L^{\infty}_{x,v}} \lesssim&  \normm{\e^{1/2} \omega_\b R_0}_{L^2_{x,v}} + \e^{1/2}\sup_{0 \le s \le t}\normm{\omega_\b r(s)} + \e^{1/2} \inn{\inn{R^l}}^2\\
& +\sup_{0\le s\le t } \normm{ \P_{1} R^{l+1}(s)}_{L^6_{x,v}}  +  \e^{-1}\sup_{0\le s\le t } \normm{ \P_{2} R^{l+1}(s)}_{L^{2}_{x,v}}\\
&+\e^{-1/2}\sup_{0\le s\le t } \normm{ {\P}_{3} R^{l+1}(s)}_{L^3_{x,v}} +  \e^{-1}\sup_{0\le s\le t } \normm{ \ip R^{l+1}(s)}_{L^{2}_{x,v}} ,
\end{align*}
and
\begin{align*}
\normm{\e^{3/2} \omega_\b \pt_t R^{l+1}}_{L^{\infty}_{t},L^{\infty}_{x,v}} \lesssim&  \normm{\e^{3/2} \omega_\b \pt_t R_0}_{L^2_{x,v}} + \e^{3/2}\sup_{0 \le s \le t}\normm{ \omega_\b \pt_t r(s)} + \e^{3/2} \inn{\inn{R^{l}}}^2\\
& + \sup_{0\le s\le t } \normm{ \pt_t R^{l+1}(s)}_{L^{2}_{x,v}}.
\end{align*}
According to the Theorem \ref{l2l6estimate},
$\P_{1} f$, $\P_{2} f$ and $\P_{3} f$ can be bounded by
\begin{align*}
\normm{ \P_{1} R^{l+1}(s)}_{L^6_{x,v}} &\lesssim \e^{-1} \normm{\ip R^{l+1} }_{\nu} +\e\normm{\pt_t R^{l+1} }_{2} + \normm{\G (R^l,R^l) \nu^{-1/2}}_{2}, \\
\normm{\P_{2} R^{l+1}(s)}_{L^2_{x,v}} &\lesssim  \normm{\ip R^{l+1}}_{\nu}
\\
\normm{{\P}_{3} R^{l+1}(s)}_{L^3_{x,v}} &\lesssim  \norm{(1-P_{\g})R^{l+1}}_{2,+} + \e^{1/2} \norm{r}. 
\end{align*}
Since,
\begin{eqnarray*}
\sup_{0 \le s \le t} \e^{-2} \normm{\ip R^{l+1}}_{\nu}^2 &\le& \e^{-2} \normm{\ip R_0}^2_{\nu} + 2 \e^{-2} \int_{0}^{t} \normm{\ip R^{l+1}}_{\nu} \normm{\ip \pt_t R^{l+1}}_{\nu} \dd s\\
&\le& \e^{-2} \normm{\ip R_0}^2_{\nu} + \mathscr{D}[R^{l+1}](t),
\end{eqnarray*}
and similarly
\begin{eqnarray*}
\sup_{0 \le s \le t} \e^{-1} \norm{(1-P_{\g})) R^{l+1}}_{2,+}^2 &\le& \e^{-1} \norm{(1-P_{\g}) R_0}^2_{2,+} + 2 \int_{0}^{t} \norm{(1-P_{\g}) R^{l+1}}_{2,+} \norm{(1-P_{\g}) \pt_t R^{l+1}}_{2,+} \dd s\\
&\le& \e^{-1} \norm{(1-P_{\g}) R_0}^2_{2,+} + \mathscr{D}[R^{l+1}](t),
\end{eqnarray*}
and
\begin{eqnarray*}
\sup_{0 \le s \le t}  \normm{\G(R^l,R^l)}_{\nu}^2 &\le&  \normm{\G(R_0,R_0)}^2_{\nu} + 2  \int_{0}^{t} \normm{\G(R^l,R^l)}_{\nu} \normm{\pt_t \G(R,R)}_{\nu} \dd s\\
&\le& \normm{\G(R_0,R_0)_0}^2_{\nu} + \inn{\inn{R^l}}^4.
\end{eqnarray*}

Thus,
\begin{eqnarray*}
\inn{\inn{{R}^{l+1}}} &\lesssim& 
o(1) + o(1)\inn{\inn{{R}^{l+1}}} + \inn{\inn{{R}^{l}}}^2.
\end{eqnarray*}
\newline
\textbf{STEP 2)} We need to prove that the map $\Phi : R^{l} \to R^{l+1}$ is a contraction map via norm $\inn{\inn{\cdot}}$.
We know the following relation holds:
\begin{align*}
&\e \pt_t ({R}^{l+1} - {R}^{l}) + v \cdot \nabla_x ({R}^{l+1} - {R}^{l}) + \e^{-1}L({R}^{l+1} - {R}^{l})\\
&= \tilde{L}({R}^{l+1} - {R}^{l}) + \e^{1/2} (\G({R}^{l+1},{R}^{l+1}) - \G({R}^{l},{R}^{l})).
\end{align*}
\begin{align*}
({R}^{l+1} - {R}^{l}) |_{\gamma_{-}} = \mathscr{P}_{\gamma}({R}^{l+1} - {R}^{l})
\end{align*}
If we use same energy estimate as \textbf{STEP 1)}, then we can get the following bounds:
\begin{align*}
\inn{\inn{{R}^{l+1} - {R}^{l}}} \lesssim& 
\left(\normm{f_1+\e f_2 }_{L^{2}_{t}L^{\infty}_{x,v}} + \normm{\pt_t f_1+\e \pt_t f_2 }_{L^{2}_{t}L^{\infty}_{x,v}}\right) \inn{\inn{{R}^{l+1} - {R}^{l}}} \\
&+ (\inn{\inn{{R}^{l+1}}} +\inn{\inn{{R}^{l}}})(\inn{\inn{{R}^{l+1} - {R}^{l}}}).
\end{align*}
Which means the map $\Phi$ is a contraction map and it has a fixed point.
\end{proof}

\section*{Acknowledgement}
The author thanks Yan
Guo for his comments and suggestions. This research is funded in part
by a Kwanjeong foundation and NSF Grant DMS-2106650

\end{document}